\definecolor{forestgreen(traditional)}{rgb}{0.0, 0.27, 0.13}
\definecolor{forestgreen(web)}{rgb}{0.13, 0.55, 0.13}
\definecolor{airforceblue}{rgb}{0.36, 0.54, 0.66}
\newcommand\GF{\operatorname{GF}}
\newcommand{\rnk}{\mathrm{rank}}
\newtheorem{theorem}{Theorem}[section]
\newtheorem{lemma}[theorem]{Lemma}
\newtheorem{conjecture}[theorem]{Conjecture}
\newtheorem{claim}{Claim}
\newtheorem{corollary}[theorem]{Corollary}
\newtheorem{proposition}[theorem]{Proposition}
\newtheorem{case}{Case}
\newtheorem*{theorem1}{Theorem~\ref{thm:delta}}
\newtheorem*{theorem2}{Theorem~\ref{thm:main_algo}}
\newtheorem*{theorem3}{Theorem~\ref{thm:even}}
\newtheorem*{theorem4}{Theorem~\ref{thm:repre1}}
\newtheorem*{theorem5}{Theorem~\ref{thm:repre2}}
\theoremstyle{definition}
\theoremstyle{remark}
\title{$\Gamma$-graphic delta-matroids and their applications}
\author[2,1]{Donggyu Kim\thanks{Supported by the Institute for Basic Science (IBS-R029-C1).}}
\author[$*$2,1]{Duksang Lee}
\author[$*$1,2]{Sang-il Oum}
\affil[1]{Discrete Mathematics Group,
Institute for Basic Science (IBS),
Daejeon,~South~Korea}
\affil[2]{Department of Mathematical Sciences, KAIST, Daejeon, South~Korea}
\affil[ ]{Email: \texttt{donggyu@kaist.ac.kr}, \texttt{duksang@kaist.ac.kr}, \texttt{sangil@ibs.re.kr}}
\date{\today}	
\begin{document}
\maketitle
\begin{abstract}
For an abelian group $\Gamma$, a $\Gamma$-labelled graph is a graph whose vertices are labelled by elements of $\Gamma$.
We prove that a certain collection of edge sets of a $\Gamma$-labelled graph forms a delta-matroid, which we call a \emph{$\Gamma$-graphic} delta-matroid, and provide a polynomial-time algorithm to solve the separation problem, which allows us to apply the symmetric greedy algorithm of Bouchet to find a maximum weight feasible set in such a delta-matroid.
We present two algorithmic applications on graphs; \textsc{Maximum Weight Packing of Trees of Order Not Divisible by $k$} and \textsc{Maximum Weight $S$-Tree Packing}.
We also discuss various properties of $\Gamma$-graphic delta-matroids.
\end{abstract}

\section{Introduction}\label{sec: intro}
We introduce the class of $\Gamma$-graphic delta-matroids arising from graphs whose vertices are labelled by elements of an abelian group $\Gamma$. This allows us to show that the following problems are solvable in polynomial time by using the symmetric greedy algorithm~\cite{Bouchet1987sym}.
\begin{tcolorbox}
  \textsc{Maximum Weight Packing of Trees of Order Not Divisible by $k$} \\
  \textbf{Input:} An integer $k\geq 2$, a graph $G$, and a weight $w : E(G) \rightarrow \mathbb{Q}$. \\
  \textbf{Problem:} Find vertex-disjoint trees $T_1, T_2, \dots, T_m$ for some $m$ such that $|V(T_i)|\not\equiv 0 \pmod{k}$ for each $i\in\{1,\ldots,m\}$ and $\sum_{i=1}^m \sum_{e \in E(T_i)} w(e)$ is maximized.
\end{tcolorbox}

For a vertex set $S$ of a graph $G$, a subgraph of $G$ is an \emph{$S$-tree} if it is a tree intersecting $S$.

\begin{tcolorbox}
  \textsc{Maximum Weight $S$-Tree Packing} \\
  \textbf{Input:} A graph $G$, a nonempty subset $S$ of $V(G)$, and a weight $w : E(G) \rightarrow \mathbb{Q}$. \\
  \textbf{Problem:} Find vertex-disjoint $S$-trees $T_1, T_2, \dots, T_m$ for some $m$ such that
  $\bigcup_{i=1}^m V(T_i) = V(G)$ and
  $\sum_{i=1}^m \sum_{e \in E(T_i)} w(e)$ is maximized.
\end{tcolorbox}

Let $\Gamma$ be an abelian group. We assume that $\Gamma$ is an additive group. A \emph{$\Gamma$-labelled graph} is a pair $(G,\gamma)$ of a graph $G$ and a map $\gamma : V(G) \rightarrow \Gamma$.
A subgraph $H$ of $G$ is \emph{$\gamma$-nonzero} if, for each component $C$ of $H$, 
\begin{enumerate}[label=(G\arabic*)]
  \item $\sum_{v\in V(C)} \gamma(v) \neq 0$ or $\gamma|_{V(C)}\equiv 0$, and
  \item if $\gamma|_{V(C)}\equiv 0$, then $G[V(C)]$ is a component of $G$.
  \end{enumerate}
A subset $F$ of $E(G)$ is \emph{$\gamma$-nonzero} in $G$ if a subgraph $(V(G),F)$ is $\gamma$-nonzero.
A subset $F$ of $E(G)$ is \emph{acyclic} in $G$ if a subgraph $(V(G),F)$ has no cycle. 

Bouchet~\cite{Bouchet1987sym} introduced delta-matroids which are set systems $(E,\mathcal{F})$ satisfying certain axioms.
Our first theorem proves that the set of acyclic $\gamma$-nonzero sets in a $\Gamma$-labelled graph $(G,\gamma)$ forms a delta-matroid, which we call a \emph{$\Gamma$-graphic} delta-matroid.
For sets $X$ and $Y$, let $X\triangle Y=(X-Y)\cup(Y-X)$.

\begin{theorem}
  \label{thm:delta}
  Let $\Gamma$ be an abelian group and $(G,\gamma)$ be a $\Gamma$-labelled graph. 
  If $\mathcal{F}$ is the set of acyclic $\gamma$-nonzero sets in $G$, then the following hold.
  \begin{enumerate}[label=\rm(\arabic*)]
  \item $\mathcal{F}\neq\emptyset$.
  \item For $X,Y\in\mathcal{F}$ and $e\in X\triangle Y$, there exists $f\in X\triangle Y$ such that $X\triangle\{e,f\}\in\mathcal{F}$.
  \end{enumerate}
\end{theorem}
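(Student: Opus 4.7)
For part (1), I would construct a feasible set component-by-component. For each connected component $K$ of $G$ with $\gamma|_{V(K)} \equiv 0$, take $F_K$ to be any spanning tree of $K$; the single component of $(V(K),F_K)$ is $K$ itself and satisfies (G2). When $\gamma|_{V(K)} \not\equiv 0$, start from a spanning tree $T_K$ and peel leaves inductively: at a leaf $v$, either cut the edge at $v$ (if $\gamma(v) \neq 0$ and removing $v$ leaves a nonzero total label sum) or keep the edge and absorb $\gamma(v)$ into the neighbor for the recursion. When the total label sum on $V(K)$ itself is zero, a preliminary cut at an edge of $T_K$ separating $T_K$ into two nonzero-sum halves---which exists by a simple subtree-sum descent---is applied first, and the recursion is carried out on each half. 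The union $\bigcup_K F_K$ is then acyclic and $\gamma$-nonzero.

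For part (2), I would handle $e \in X \setminus Y$ as the primary case; $e \in Y \setminus X$ is handled analogously, with the unique cycle in $X \cup \{e\}$ providing the edge $f \in X \setminus Y$ via the acyclicity of $Y$. Let $e = uv$, let $C$ be the $X$-component containing $e$, and let $C_u, C_v$ be the two subtrees obtained by deleting $e$, with label sums $\sigma_u, \sigma_v$. If both $C_u, C_v$ are $\gamma$-nonzero in $G$, I take $f = e$. Otherwise, without loss of generality $C_u$ is not $\gamma$-nonzero and $\sigma_u = 0$, and I search for $f$ in one of three families: (a) $f \in Y \setminus X$ crossing between $V(C_u)$ and $V(C_v)$, which restores $V(C)$ as a $\gamma$-nonzero component of $X \triangle \{e,f\}$; (b) $f \in Y \setminus X$ between $V(C_u)$ and a different $X$-component $C'$ of the same $G$-component, where $\sigma(C') \neq 0$ automatically since $V(C')$ lies strictly inside the $G$-component of $V(C_u)$; (c) $f \in X \setminus Y$ internal to $C_u$ that bisects $C_u$ into nonzero-sum subtrees, produced by the Part (1) peeling inside $C_u$.

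The main obstacle is to show that at least one candidate always exists. Failure of (a) and (b) simultaneously forces $\partial_G(V(C_u)) \cap Y = \emptyset$, so $V(C_u)$ is a disjoint union of $Y$-components $D_1, \ldots, D_k$. Since $V(C_u)$ is strictly contained in a $G$-component, no $V(D_j)$ is a $G$-component, forcing each $\sigma(D_j) \neq 0$; in particular $\gamma|_{V(C_u)} \not\equiv 0$, which sidesteps the potentially delicate $\gamma \equiv 0$ sub-subcase and makes the Part (1) peeling applicable inside $C_u$. The technical heart of the argument is then to verify that the bisecting edge can be chosen in $E(C_u) \setminus Y \subseteq X \setminus Y$ rather than in $X \cap Y$, which I expect to follow from the basis-exchange structure between the tree $E(C_u)$ and the $Y$-subforest on $V(C_u)$, combined with the constraint that the $\sigma(D_j)$'s are nonzero but sum to zero.
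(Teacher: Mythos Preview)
Your overall plan is sound, and cases (a)--(c) are the right trichotomy, but the proposal has a genuine gap precisely where you flag it: you do not prove that a bisecting edge for $C_u$ can be chosen in $E(C_u)\setminus Y$. The gesture toward ``basis-exchange structure'' is not an argument. Here is how to actually close it. Assume every $f\in E(C_u)\setminus Y$ gives a zero-sum split of $C_u$, and contract the subforest $E(C_u)\cap Y$ inside $C_u$ to obtain a tree $T'$ whose vertices are the components $P_1,\ldots,P_m$ of $(V(C_u),E(C_u)\cap Y)$ and whose edges are exactly $E(C_u)\setminus Y$. Every edge of $T'$ then gives a zero-sum split of $T'$, and peeling leaves of $T'$ shows $\sigma(P_i)=0$ for all $i$. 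But each $Y$-component $D_j\subseteq V(C_u)$ is a union of some of the $P_i$ (since $E(C_u)\cap Y\subseteq Y$), whence $\sigma(D_j)=0$, contradicting what you already derived from the failure of (a) and (b). So a good $f\in E(C_u)\setminus Y$ exists after all.

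The paper avoids this entire difficulty by a different device: it proves~(2) by induction on $|E(G)|$, contracting any edge $g\in X\cap Y$ and applying the inductive hypothesis to $X\setminus\{g\}$ and $Y\setminus\{g\}$ in $(G,\gamma)/g$. After this reduction one has $X\cap Y=\emptyset$, so \emph{every} edge of $C_u$ already lies in $X\setminus Y\subseteq X\triangle Y$, and one may simply take $f$ to be any edge of $C_u$ that isolates a single nonzero-labelled vertex; no interaction with $Y$ needs to be analysed. Your direct approach works too once the gap above is filled, but the contraction-to-disjointness trick is shorter. Finally, your treatment of $e\in Y\setminus X$ is incomplete: when $X\cup\{e\}$ is acyclic, $e$ joins two distinct $X$-components $C'$ and $C''$, there is no cycle to exploit, and you must find $f$ inside the merged tree $C'\cup C''\cup\{e\}$; this runs into the same $X\cap Y$ issue and is again handled either by your contraction argument above or by the paper's reduction.
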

Bouchet~\cite{Bouchet1987sym} proved that the symmetric greedy algorithm finds a maximum weight set in $\mathcal{F}$ for a delta-matroid $(E,\mathcal{F})$. But it requires the \emph{separation oracle}, which determines, for two disjoint subsets $X$ and $Y$ of $E$, whether there exists a set $F\in\mathcal{F}$ such that $X\subseteq F$ and $F\cap Y=\emptyset$.
We provide the separation oracle that runs in polynomial time for $\Gamma$-graphic delta-matroids given by $\Gamma$-labelled graphs. 
As a consequence, we prove the following theorem.

\begin{tcolorbox}
  \textsc{Maximum Weight Acyclic $\gamma$-nonzero Set} \\
  \textbf{Input:} A $\Gamma$-labelled graph $(G, \gamma)$ and a weight $w : E(G) \rightarrow \mathbb{Q}$. \\
  \textbf{Problem:} Find an acyclic $\gamma$-nonzero set $F$ in $G$ maximizing $\sum_{e \in F} w(e)$.
\end{tcolorbox}

\begin{theorem}\label{thm:main_algo}
  \textsc{Maximum Weight Acyclic $\gamma$-nonzero Set} is solvable in polynomial time.
\end{theorem}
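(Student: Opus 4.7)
The plan is to combine Theorem~\ref{thm:delta} with Bouchet's symmetric greedy algorithm. By Theorem~\ref{thm:delta}, the family $\mathcal{F}$ of acyclic $\gamma$-nonzero edge sets is the collection of feasible sets of a delta-matroid on $E(G)$. Bouchet~\cite{Bouchet1987sym} showed that a maximum weight feasible set of a delta-matroid can be computed in polynomial time as soon as one has a polynomial-time \emph{separation oracle}, namely an algorithm that, given disjoint $X,Y\subseteq E(G)$, decides whether some $F\in\mathcal{F}$ satisfies $X\subseteq F$ and $F\cap Y=\emptyset$. Thus the theorem reduces to the design of such a separation oracle for $\Gamma$-graphic delta-matroids.

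To build the oracle, first reject unless $X$ is acyclic in $G$. Otherwise, let $G':=G-Y$ and contract the edges of $X$ in $G'$ to obtain a multigraph $H$; each vertex $v$ of $H$ represents a component $C_v$ of the spanning forest $(V(G),X)$, and we decorate $v$ with the aggregated label $\gamma_H(v):=\sum_{u\in V(C_v)}\gamma(u)$ together with a Boolean flag recording whether $\gamma|_{V(C_v)}\equiv 0$. Candidate feasible sets $F$ with $X\subseteq F\subseteq E(G)-Y$ are in bijection with subforests $T$ of $H$ via $F\leftrightarrow X\cup E(T)$, and conditions (G1)--(G2) translate, for each component $B$ of $T$, into the requirement that either $\sum_{v\in B}\gamma_H(v)\neq 0$, or every vertex of $B$ is flagged and $\bigcup_{v\in B}V(C_v)$ equals a component of $G$. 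Because no component of a forest in $H$ can straddle two distinct components of $G$, this feasibility question decouples over the components of $G$: for each component $K$ of $G$ it suffices to decide whether the submultigraph $H_K$ of $H$ on the vertices representing subsets of $V(K)$ admits a spanning subforest satisfying the translated condition.

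For each such $K$ there are only two options: either declare $V(K)$ a single blob, which is allowed only when $\gamma|_{V(K)}\equiv 0$ and $H_K$ is connected; or partition the vertices of $H_K$ into at least two $H_K$-connected pieces whose aggregated $\gamma_H$-sums are all nonzero. The main obstacle is implementing the second option in polynomial time, i.e.\ given a connected multigraph with labels in $\Gamma$ on its vertices, deciding whether its vertex set can be partitioned into connected parts each of nonzero total label, and producing such a partition when it exists. I plan to attack this by a recursive procedure that repeatedly isolates an inclusion-minimal connected subset of nonzero total label and recurses on what remains; proving correctness amounts to showing, using only that $\Gamma$ is abelian, that whenever a valid partition of the whole vertex set exists, one can always be produced by such a greedy split. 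Since each recursive step performs a bounded number of graph traversals and group operations, the whole separation oracle, and hence the main algorithm, runs in polynomial time.
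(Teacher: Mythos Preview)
Your reduction to a separation oracle and the passage to the contracted multigraph $H=(G-Y)/X$ with aggregated labels $\gamma_H$ is exactly the route the paper takes. The paper, however, does not try to \emph{construct} a witnessing forest: it proves (Proposition~\ref{prop:separating_set}) that $(X,Y)$ is separable if and only if $X$ is acyclic and $\kappa((G,\gamma)/X\setminus Y)=\kappa(G,\gamma)$. In your notation this says that for each component $K$ of $G$ either $\gamma|_{V(K)}\equiv 0$ and $H_K$ is connected, or $\gamma|_{V(K)}\not\equiv 0$ and every connected component of $H_K$ contains a vertex $v$ with $\gamma_H(v)\neq 0$. This is a single linear-time check and needs no recursion.

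Your proposed greedy, by contrast, is not correct as written. Take the path on four vertices $a,b,c,d$ (in this order) with labels $0,1,-1,0$ in $\mathbb{Z}$. The only inclusion-minimal connected subsets of nonzero total label are $\{b\}$ and $\{c\}$; removing either one leaves an isolated vertex of label $0$, so the recursion fails on both choices, yet the valid partition $\{a,b\},\{c,d\}$ exists. Hence the correctness statement you intend to prove is false for an arbitrary minimal piece, and you have not supplied a selection rule that avoids this. The clean repair is to drop the construction altogether and observe that a connected labelled graph admits a partition into connected nonzero-sum parts if and only if some label is nonzero---which is precisely part~\ref{item:d1} of Theorem~\ref{thm:delta} applied to that graph; this collapses your oracle to the $\kappa$-test above.
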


From Theorem~\ref{thm:main_algo}, we can easily deduce that both \textsc{Maximum Weight Packing of Trees of Order Not Divisible by $k$} and \textsc{Maximum Weight $S$-Tree Packing} are solvable in polynomial time.

\begin{corollary}
\label{cor:divk}
\textsc{Maximum Weight Packing of Trees of Order Not Divisible by $k$} is solvable in polynomial time.
\end{corollary}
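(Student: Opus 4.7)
The plan is to reduce \textsc{Maximum Weight Packing of Trees of Order Not Divisible by $k$} to \textsc{Maximum Weight Acyclic $\gamma$-nonzero Set} by choosing $\Gamma = \mathbb{Z}/k\mathbb{Z}$ and setting $\gamma(v) = 1$ for every $v \in V(G)$, and then invoking Theorem~\ref{thm:main_algo}.

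With these choices, for any component $C$ of a spanning subgraph of $G$, one has $\sum_{v \in V(C)} \gamma(v) \equiv |V(C)| \pmod{k}$. Since $k \geq 2$, the label $1$ is nonzero in $\Gamma$, so $\gamma|_{V(C)} \equiv 0$ is impossible; hence condition (G2) is vacuous and (G1) reduces to $|V(C)| \not\equiv 0 \pmod{k}$. Therefore an edge set $F \subseteq E(G)$ is acyclic and $\gamma$-nonzero exactly when every component of the spanning forest $(V(G), F)$ is a tree whose order is not divisible by $k$.

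Next I would set up a weight-preserving correspondence between feasible solutions of the two problems. Given vertex-disjoint trees $T_1, \dots, T_m$ with $|V(T_i)| \not\equiv 0 \pmod{k}$, take $F = \bigcup_{i=1}^m E(T_i)$; the components of $(V(G), F)$ are the $T_i$ together with the isolated vertices of $V(G) \setminus \bigcup_i V(T_i)$, each of order $1 \not\equiv 0 \pmod{k}$, so $F$ is acyclic $\gamma$-nonzero of the same total weight. Conversely, from an acyclic $\gamma$-nonzero $F$ the nontrivial components of $(V(G), F)$ furnish vertex-disjoint trees whose total edge weight equals $\sum_{e \in F} w(e)$. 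Applying Theorem~\ref{thm:main_algo} to $(G, \gamma, w)$ therefore yields an optimum tree packing in polynomial time.

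The only subtle point is that the $\gamma$-nonzero condition is imposed on every component of $(V(G), F)$, including the singletons corresponding to vertices untouched by $F$; the hypothesis $k \geq 2$ is precisely what guarantees these singletons satisfy (G1), so no additional preprocessing is required.
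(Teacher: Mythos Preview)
Your proposal is correct and follows exactly the same reduction as the paper: set $\Gamma=\mathbb{Z}_k$ with $\gamma\equiv 1$, so that acyclic $\gamma$-nonzero sets correspond to packings of trees of order not divisible by $k$, and then apply Theorem~\ref{thm:main_algo}. Your write-up is in fact more explicit than the paper's, spelling out why (G2) is vacuous, why the singleton components satisfy (G1) thanks to $k\ge 2$, and why the correspondence preserves weight.
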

\begin{proof}
Let $\Gamma = \mathbb{Z}_k$ and $\gamma: V(G) \rightarrow \mathbb{Z}_k$ be a map such that $\gamma(v) = 1$ for each $v\in V(G)$. Then, an edge set $F$ is an acyclic $\gamma$-nonzero set in $(G,\gamma)$ if and only if there exist vertex-disjoint trees $T_{1},\ldots,T_{m}$ for some $m$ such that $\bigcup_{i=1}^{m}E(T_{i})=F$ and $|V(T_{i})|\not\equiv 0\pmod{k}$ for each $i\in\{1,\ldots,m\}$.
\end{proof}

\begin{corollary}
\label{cor:stree}
\textsc{Maximum Weight $S$-Tree Packing} is solvable in polynomial time.
\end{corollary}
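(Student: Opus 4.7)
The plan is to reduce \textsc{Maximum Weight $S$-Tree Packing} to \textsc{Maximum Weight Acyclic $\gamma$-nonzero Set} and then invoke Theorem~\ref{thm:main_algo}. First I would handle feasibility: if some connected component of $G$ is disjoint from $S$, no $S$-tree packing can cover $V(G)$ and the algorithm reports that no feasible packing exists. Henceforth assume every component of $G$ meets $S$. Set $\Gamma=\mathbb{Z}$ and define $\gamma\colon V(G)\to\mathbb{Z}$ by $\gamma(v)=1$ if $v\in S$ and $\gamma(v)=0$ otherwise.

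The central step is the correspondence: a subset $F\subseteq E(G)$ is an acyclic $\gamma$-nonzero set of $(G,\gamma)$ if and only if the components $T_{1},\dots,T_{m}$ of $(V(G),F)$ are vertex-disjoint $S$-trees whose vertex sets cover $V(G)$. For the forward direction, $F$ acyclic means each component of $(V(G),F)$ is a tree, and (G1) applied to a component $C$ gives two cases: either $\sum_{v\in V(C)}\gamma(v)=|V(C)\cap S|\neq 0$, so $C$ meets $S$ and is an $S$-tree; or $\gamma|_{V(C)}\equiv 0$, in which case (G2) forces $G[V(C)]$ to be a component of $G$ disjoint from $S$, contradicting the standing assumption. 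Conversely, given vertex-disjoint $S$-trees $T_{1},\dots,T_{m}$ covering $V(G)$, the edge-union $F=\bigcup_{i}E(T_{i})$ is acyclic, its components in $(V(G),F)$ are exactly the $T_{i}$, and $|V(T_{i})\cap S|\geq 1$ yields the first alternative of (G1).

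Since $\sum_{e\in F}w(e)=\sum_{i}\sum_{e\in E(T_{i})}w(e)$, the two maximization problems are equivalent, and Theorem~\ref{thm:main_algo} then yields a polynomial-time algorithm. I do not foresee a real obstacle; the only subtlety is that a vertex $v\notin S$ isolated in $F$ would form a singleton component of $(V(G),F)$ that is not an $S$-tree, but (G2) correctly excludes this precisely because the feasibility preprocessing ensures that $\{v\}$ cannot be a component of $G$.
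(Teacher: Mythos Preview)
Your proposal is correct and follows exactly the paper's approach: reduce to \textsc{Maximum Weight Acyclic $\gamma$-nonzero Set} via $\Gamma=\mathbb{Z}$ with $\gamma$ the indicator of $S$, after discarding instances where some component of $G$ misses $S$. You supply the two-direction verification of the correspondence that the paper merely asserts, so your write-up is in fact more detailed than the original.
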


\begin{proof}
We may assume that every component of $G$ has a vertex in $S$.
Let $\Gamma = \mathbb{Z}$ and $\gamma: V(G) \rightarrow \mathbb{Z}$ be a map such that
\[
\gamma(v)=
\begin{cases}
1 & \text{if $v\in S$,} \\
0 & \text{otherwise.}
\end{cases}
\] Then, an edge set $F$ is an acyclic $\gamma$-nonzero set in $(G,\gamma)$ if and only if there exist vertex-disjoint $S$-trees $T_{1},\ldots,T_{m}$ for some $m$ such that $\bigcup_{i=1}^{m}V(T_{i})=V(G)$ and $\bigcup_{i=1}^{m}E(T_{i})=F$.
\end{proof}

One of the major motivations to introduce $\Gamma$-graphic delta-matroids is to generalize the concept of graphic delta-matroids introduced by Oum~\cite{Oum2009circle}, which are precisely $\mathbb{Z}_2$-graphic delta-matroids.
Oum~\cite{Oum2009circle} proved that every minor of graphic delta-matroids is graphic.
We will prove that every minor of a $\Gamma$-graphic delta-matroid is $\Gamma$-graphic.

A delta-matroid $(E,\mathcal{F})$ is \emph{even} if $|X\triangle Y|$ is even for all $X, Y\in\mathcal{F}$.
Oum~\cite{Oum2009circle} proved that every graphic delta-matroid is even.
We characterize even $\Gamma$-graphic delta-matroids as follows.

\begin{theorem}
  \label{thm:even}
  Let $\Gamma$ be an abelian group.
  Then a $\Gamma$-graphic delta-matroid is even if and only if it is graphic.
\end{theorem}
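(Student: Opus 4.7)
The direction ``graphic $\Rightarrow$ even'' is immediate: a graphic delta-matroid is by definition $\mathbb{Z}_2$-graphic, and Oum~\cite{Oum2009circle} proved that every graphic delta-matroid is even.

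For the converse, let $(G,\gamma)$ be a $\Gamma$-labelled graph whose $\Gamma$-graphic delta-matroid $\mathcal M$ is even, let $V_+=\{v\in V(G):\gamma(v)\neq 0\}$, and define a $\mathbb{Z}_2$-labelling $\gamma'$ of $V(G)$ by $\gamma'(v)=1$ if $v\in V_+$ and $\gamma'(v)=0$ otherwise. The plan is to show that the $\mathbb{Z}_2$-graphic delta-matroid of $(G,\gamma')$ also equals $\mathcal M$. Both delta-matroids share the groundset $E(G)$, agree in the acyclicity requirement, and satisfy identical instances of condition~(G2) (since $\gamma|_C\equiv 0\iff\gamma'|_C\equiv 0$), so it suffices to prove: for every subtree $T$ of $G$ with $V(T)\cap V_+\neq\emptyset$,
\[
  \sum_{v\in V(T)}\gamma(v)\neq 0\text{ in }\Gamma\iff |V(T)\cap V_+|\text{ is odd.}
\]

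I would prove this equivalence by induction on $|V(T)|$. The base case (a single nonzero-labelled vertex) is trivial, and deleting a leaf of $T$ whose $\gamma$-label is zero preserves both sides. For the remaining case of a nonzero-labelled leaf, the key input is the following lemma: for every edge $uv$ of $G$ with $\gamma(u),\gamma(v)\neq 0$, one has $\gamma(u)+\gamma(v)=0$ in $\Gamma$. Granting the lemma, the nonzero labels along paths of $T$ alternate between some $a$ and $-a$, and a careful counting argument (together with a further use of evenness to rule out accidental vanishing of $\Gamma$-sums) delivers the claimed equivalence. The lemma itself is proved by starting from a feasible set guaranteed by Theorem~\ref{thm:delta}(1) and applying the symmetric exchange axiom Theorem~\ref{thm:delta}(2) to construct a feasible $F\in\mathcal F$ in which $u$ and $v$ lie in distinct components whose $\gamma$-sums are exactly $\gamma(u)$ and $\gamma(v)$; then $F\cup\{uv\}$ is acyclic and differs from $F$ by a single edge, so evenness forbids its feasibility, forcing the merged component's $\gamma$-sum to be zero.

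The main obstacle is the construction of the desired feasible $F$: when zero-labelled vertices of $G$ cannot be cleanly separated from $u$ and $v$, direct isolation is blocked. Overcoming this will likely require a delicate iterated use of the symmetric exchange axiom, combined with a reduction to a simpler subgraph setting, and a case split between the bipartite case (where labels alternate between $a$ and $-a$ with $a$ of potentially infinite order) and the non-bipartite case (where evenness forces $2a=0$ so that labels effectively live in $\{0,a\}\cong\mathbb{Z}_2$).
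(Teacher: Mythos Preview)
Your overall framework is correct and matches the paper exactly: define the $\mathbb{Z}_2$-labelling $\gamma'$ by $\gamma'(v)=1\iff\gamma(v)\neq 0$, and reduce the statement to showing that for every connected subgraph $H$ of $G$, $\sum_{v\in V(H)}\gamma(v)=0$ in $\Gamma$ if and only if $\sum_{v\in V(H)}\gamma'(v)=0$ in $\mathbb{Z}_2$.  This is precisely the paper's Lemma~\ref{lem:even1} and Proposition~\ref{prop: gamma-graphic and even}.

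Where your proposal diverges is in the proof of that equivalence, and there is a genuine gap.  Your ``key lemma'' asserts $\gamma(u)+\gamma(v)=0$ only for \emph{edges} $uv$ of $G$ with both endpoints nonzero.  But in the inductive step, when you remove a nonzero-labelled leaf $\ell$ of $T$, its unique neighbour $p$ in $T$ may well have $\gamma(p)=0$, so the key lemma says nothing.  More generally, two nonzero-labelled vertices of $T$ separated by a string of zero-labelled vertices are not covered by your lemma, and your claim that nonzero labels ``alternate between some $a$ and $-a$'' does not follow from it.  The phrase ``a further use of evenness'' hides exactly the statement you are trying to prove.  So even granting your key lemma, the induction does not close, and the obstacle you flag in your final paragraph (constructing the desired feasible $F$ via iterated exchanges) remains, in addition to this one.

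The paper bypasses all of this with a single stroke.  In the inductive step for a connected $H$ with $|V(H)|\ge 2$, pick $v$ so that $H':=H\setminus v$ is connected, and use the minor machinery of Section~\ref{sec:minor}: delete $E(G)\setminus E(H)$, contract $E(H')$, and discard loops and parallel edges.  The result is a $\Gamma$-labelled graph $(G',\gamma')$ on a \emph{single edge} $e=vw$ with $\gamma'(v)=\gamma(v)$ and $\gamma'(w)=\sum_{u\in V(H')}\gamma(u)$; by Proposition~\ref{prop: minor graft and delta-matroid} its delta-matroid is a minor of $\mathcal G(G,\gamma)$, hence even.  If the desired equivalence failed one would have $\gamma'(v),\gamma'(w),\gamma'(v)+\gamma'(w)$ all nonzero, so $\mathcal G(G',\gamma')=(\{e\},\{\emptyset,\{e\}\})$, contradicting Bouchet's excluded-minor characterisation of evenness (Theorem~\ref{thm: excluded minor for even delta-matroids}).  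This two-line argument replaces the delicate exchange construction you were attempting, and handles the ``zero-labelled vertices in between'' issue automatically because contraction collapses them into $w$.
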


Bouchet~\cite{Bouchet1988repre} proved that for a symmetric or skew-symmetric matrix $A$ over a field $\mathbb{F}$, the set of index sets of nonsingular principal submatrices of $A$ forms a delta-matroid, which we call a delta-matroid \emph{representable over $\mathbb{F}$}. 
Oum~\cite{Oum2009circle} proved that every graphic delta-matroid is representable over $\GF(2)$.
Our next theorem partially characterizes a pair of an abelian group $\Gamma$ and a field $\mathbb{F}$ such that every $\Gamma$-graphic delta-matroid is representable over $\mathbb{F}$.

If $\mathbb{F}_1$ is a subfield of a field $\mathbb{F}_2$, then $\mathbb{F}_2$ is an \emph{extension field} of $\mathbb{F}_1$, denoted by $\mathbb{F}_2/\mathbb{F}_1$. The \emph{degree} of a field extension $\mathbb{F}_2/\mathbb{F}_1$, denoted by $[\mathbb{F}_2:\mathbb{F}_1]$, is the dimension of $\mathbb{F}_{2}$ as a vector space over $\mathbb{F}_1$.

\begin{theorem}\label{thm:repre1}
  Let $p$ be a prime, $k$ be a positive integer, and $\mathbb{F}$ be a field of characteristic $p$. If $[\mathbb{F}:\GF(p)]\geq k$, then every $\mathbb{Z}_p^k$-graphic delta-matroid is representable over $\mathbb{F}$.
 \end{theorem}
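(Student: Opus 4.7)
The plan is to embed $\mathbb{Z}_p^k$ additively into $\mathbb{F}$ and then encode the $\Gamma$-labelled graph as a symmetric matrix over $\mathbb{F}$ whose principal minors track the acyclic $\gamma$-nonzero sets via an all-minors matrix-tree (Cauchy--Binet) identity. Since $[\mathbb{F}:\GF(p)]\geq k$, pick $\alpha_1,\dots,\alpha_k\in\mathbb{F}$ that are linearly independent over $\GF(p)$ and define the injective additive homomorphism $\phi\colon\mathbb{Z}_p^k\to(\mathbb{F},+)$ by $\phi(a_1,\dots,a_k)=\sum_i a_i\alpha_i$. The crucial consequence is that $\sum_{v\in V(C)}\gamma(v)=0$ in $\mathbb{Z}_p^k$ if and only if $\sum_{v\in V(C)}\phi(\gamma(v))=0$ in $\mathbb{F}$, which is precisely the group-theoretic content of condition~(G1). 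After fixing an arbitrary orientation of $E(G)$, let $M$ be the (signed) $V(G)\times E(G)$ incidence matrix over $\mathbb{F}$ (signs are immaterial when $p=2$) and $D=\diag(\phi(\gamma(v)))_{v\in V(G)}$, and form the symmetric block matrix
\[
A=\begin{pmatrix} D & M \\ M^T & 0 \end{pmatrix}
\]
over $\mathbb{F}$ indexed by $V(G)\cup E(G)$.

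The central computation is a Cauchy--Binet expansion of the principal minors $\det(A[V(G)\cup Y])$ for $Y\subseteq E(G)$: I expect
\[
\det(A[V(G)\cup Y])=(-1)^{|Y|}\sum_{\substack{S\subseteq V(G)\\|S|=|Y|}}\det(M[S,Y])^2\prod_{v\in V(G)\setminus S}\phi(\gamma(v)),
\]
and the standard fact that $\det(M[S,Y])=\pm 1$ exactly when $Y$ is acyclic in $G$ and $V(G)\setminus S$ picks exactly one vertex from each component of the spanning subgraph $(V(G),Y)$ reduces this to
\[
\det(A[V(G)\cup Y])=(-1)^{|Y|}\prod_{C}\Bigl(\sum_{v\in V(C)}\phi(\gamma(v))\Bigr),
\]
where the product runs over the components $C$ of $(V(G),Y)$, whenever $Y$ is acyclic, and equals $0$ otherwise. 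Consequently $V(G)\cup Y$ is a nonsingular principal submatrix of $A$ precisely when $Y$ is acyclic and every component $C$ of $(V(G),Y)$ has $\sum_{v\in V(C)}\gamma(v)\neq 0$, which captures condition~(G1) in its first alternative but not~(G2).

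To produce a representation on the ground set $E(G)$, pick any acyclic $\gamma$-nonzero set $Y_0$ (existing by Theorem~\ref{thm:delta}(1)), pivot $A$ at the feasible set $V(G)\cup Y_0$, and delete the rows and columns indexed by $V(G)$; the resulting symmetric matrix on $E(G)$ has nonsingular principal submatrices indexed by $\{Y\triangle Y_0:Y\in\mathcal{F}\}$, so that the $\Gamma$-graphic delta-matroid is represented up to the twist by $Y_0$. The main obstacle, and the reason~(G2) demands separate treatment, is a component $C$ of $G$ all of whose vertices carry label $0$: on such $C$ the determinant formula vanishes identically, so $A$ sees no feasible set there, yet~(G2) forces the delta-matroid restricted to $E(C)$ to be precisely the graphic matroid of $C$, which is representable over every field. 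Since the $\Gamma$-graphic delta-matroid factors as the direct sum of its restrictions to the edge sets of the components of $G$, and direct sums preserve representability, combining the construction above on the components with at least one nonzero label with a known representation of the graphic matroid on each all-zero component finishes the proof.
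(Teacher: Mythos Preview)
Your argument is correct and morally the same as the paper's, though packaged differently. The paper first uses Lemma~\ref{lem: labelled-graft assigned by nonzeros} to reduce to the case $\gamma(v)\neq 0$ for every~$v$, then sets $B=\diag\bigl(1/\phi(\gamma(v))\bigr)$ and works directly with the $E(G)\times E(G)$ symmetric matrix $I_G^{T}BI_G$; Corollary~\ref{coro: 3-term CB formula} (the three--term Cauchy--Binet identity) then yields $\det\bigl((I_G^{T}BI_G)[E(H)]\bigr)=\det(B[V(H)])\cdot\phi\!\bigl(\sum_{v\in V(H)}\gamma(v)\bigr)$ for every tree~$H$, which is exactly your product formula after Schur complementation. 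Indeed, when $D$ is invertible your block matrix satisfies $\det(A[V(G)\cup Y])=(-1)^{|Y|}\det(D)\det\bigl((I_G^{T}D^{-1}I_G)[Y]\bigr)$, so your construction and the paper's are linked by the Schur complement. The trade-offs: your block matrix tolerates individual zero labels (the Laplace/Cauchy--Binet expansion does not need $D$ invertible) and you handle all-zero components by a direct-sum appeal to graphic matroids, whereas the paper gets an explicit $E(G)\times E(G)$ representation at the cost of the pendant-vertex reduction. One small point to tighten: the sentence ``the resulting symmetric matrix on $E(G)$'' is not literally what Tucker pivoting gives (pivoting a symmetric matrix does not stay symmetric on the nose), so it is cleaner simply to invoke closure of $\mathbb{F}$-representability under minors (Bouchet--Duchamp) once you have shown that $(V(G)\cup E(G),\mathcal{F}(A))\triangle V(G)\setminus V(G)=\mathcal{G}(G,\gamma)$ on each component with $\gamma\not\equiv 0$.
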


 For a prime $p$, an abelian group is an \emph{elementary abelian $p$-group} if every nonzero element has order $p$.

\begin{theorem}\label{thm:repre2}
 Let $\mathbb{F}$ be a finite field of characteristic $p$ and $\Gamma$ be an abelian group.
 If every $\Gamma$-graphic delta-matroid is representable over $\mathbb{F}$, then $\Gamma$ is an elementary abelian $p$-group.
\end{theorem}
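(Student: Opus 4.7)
The plan is to prove the contrapositive: assume $\Gamma$ is not an elementary abelian $p$-group, and construct a $\Gamma$-graphic delta-matroid that is not representable over $\mathbb{F}$.

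First I would reduce to the case of cyclic $\Gamma$. Since $\Gamma$ is not an elementary abelian $p$-group, some $g \in \Gamma$ satisfies $pg \neq 0$, so the cyclic subgroup $H := \langle g \rangle$ is isomorphic to $\mathbb{Z}$ or to $\mathbb{Z}_N$ for some $N \geq 2$ with $N \neq p$. The inclusion $H \hookrightarrow \Gamma$ promotes every $H$-labelled graph to a $\Gamma$-labelled graph, and since conditions (G1) and (G2) depend only on abelian-group values of $\gamma$, the feasible sets coincide in either view. Hence every $H$-graphic delta-matroid is $\Gamma$-graphic, so it suffices to find a non-representable $H$-graphic one; we may assume $\Gamma = H$.

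Next I would build witness delta-matroids of a simple uniform-like shape. For $\Gamma = \mathbb{Z}_N$, take the $N$-vertex path $v_1 v_2 \cdots v_N$ with every vertex labelled by a generator $g$ of $\mathbb{Z}_N$: a direct check shows that a subset $F$ of the $N - 1$ edges is feasible iff $F \neq E$, since each component of $(V, F)$ is a subpath of length $k$ with label-sum $kg$, which vanishes exactly when $N \mid k$, and this only happens when the subpath comprises all $N$ vertices. The associated delta-matroid is thus $2^E \setminus \{E\}$ on $N - 1$ elements. For $\Gamma = \mathbb{Z}$, the analogous cycle $C_n$ with all labels equal to $1$ produces $2^E \setminus \{E\}$ on $n$ elements for any $n \geq 2$.

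The key step---and the principal obstacle---is verifying non-representability of these delta-matroids. The class of $\Gamma$-graphic delta-matroids is closed under contraction (by the minor result stated in the introduction), as is the class of representable delta-matroids over $\mathbb{F}$ (via Schur complements of the representing symmetric or skew-symmetric matrix). Since contracting any element of $2^E \setminus \{E\}$ yields $2^{E'} \setminus \{E'\}$ with $|E'| = |E|-1$, it suffices to pin down, for each finite field $\mathbb{F}$ of characteristic $p$, a single $m_0 = m_0(\mathbb{F})$ for which $2^{[m_0]} \setminus \{[m_0]\}$ fails to be representable over $\mathbb{F}$; all larger ground-set sizes inherit the non-representability. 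For small fields this is tractable by direct case analysis on $m_0 \times m_0$ symmetric matrices with nowhere-zero diagonal, all proper principal submatrices nonsingular, and determinant zero: one verifies, for example, that $m_0 = 3$ works for $\GF(2)$ and $m_0 = 4$ works for $\GF(3)$. The most delicate regime is when $\Gamma = \mathbb{Z}_N$ with $N - 1 < m_0(\mathbb{F})$ (for instance $\Gamma = \mathbb{Z}_2$ against a large field of odd characteristic), where the basic path gadget is too small to reach $m_0$; the hardest part of the proof is then amplifying the construction---perhaps by gluing several path gadgets at shared vertices with carefully chosen labels, or by exploiting more intricate $H$-labelled graphs---so that the resulting $\Gamma$-graphic delta-matroid is large enough to carry the non-representability obstruction.
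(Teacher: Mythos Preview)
Your outline has two real gaps, the second of which is fatal to the strategy as stated.

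First, you never prove that $2^{[m]}\setminus\{[m]\}$ is non-representable over $\mathbb{F}$ for large $m$; you only check two tiny cases. Since every singleton is feasible in this delta-matroid, any representing symmetric matrix has all diagonal entries nonzero, so a Ramsey pass on the entries can at best locate a principal block of the form $aI+b(J-I)$ with $a\neq 0$, whose determinant $(a-b)^{k-1}\bigl(a+(k-1)b\bigr)$ need not vanish for any particular $k$ in characteristic $p$. The existence of your threshold $m_0(\mathbb{F})$ is thus left unproven, and it is not routine. Second, and more seriously, the ``amplification'' you defer cannot succeed for $\Gamma=\mathbb{Z}_2$ against a field of odd characteristic: every $\mathbb{Z}_2$-graphic delta-matroid is graphic and hence \emph{even} (this is Oum's theorem, the backward direction of Theorem~\ref{thm:even}), whereas $2^{[m]}\setminus\{[m]\}$ is not even for any $m\ge 2$ (both $\emptyset$ and a singleton are feasible). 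So no $\mathbb{Z}_2$-labelled graph, however intricately glued, can realize your target delta-matroid or even contain it as a minor. You need a genuinely different obstruction in that case, not a larger copy of the same one.

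The paper avoids both issues. For $|\Gamma|\ge 3$ it labels a star $K_{1,n+1}$ (with $n=R(p{+}1;|\mathbb{F}|)$) by putting $-g$ at the centre, $g$ on all leaves but one (where $g$ has order $\neq p$), and another nonzero element on the last leaf. The point is that no single edge among those $n$ is feasible, which forces the representing matrix to have \emph{zero} diagonal on those coordinates; Ramsey then yields a $(p{+}1)\times(p{+}1)$ block equal to $c(J-I)$, whose determinant $c^{p+1}(-1)^{p}p$ vanishes in characteristic $p$, contradicting the feasibility of that $(p{+}1)$-set. The case $\Gamma=\mathbb{Z}_2$ is handled separately by an explicit $K_{2,3}$ calculation (Lemma~\ref{lem: graphic not regular}) forcing $\charac(\mathbb{F})=2$. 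The decisive trick is engineering a zero diagonal so that the Ramsey block is $c(J-I)$ rather than $aI+b(J-I)$; your chosen target rules this out from the start.
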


Theorems~\ref{thm:repre1} and~\ref{thm:repre2} allow us to partially characterize pairs of a finite field $\mathbb{F}$ and an abelian group $\Gamma$ for which every $\Gamma$-graphic delta-matroid is representable over $\mathbb{F}$ as follows.
We omit its easy proof.

\begin{corollary}
\label{cor:char}
Let $\Gamma$ be a finite abelian group of order at least $2$ and $\mathbb{F}$ be a finite field.
\begin{enumerate}[label=\rm(\roman*)]
  \item\label{item:char1} For every prime $p$ and integers $1 \leq k \leq \ell$, every $\mathbb{Z}_p^k$-graphic delta-matroid is representable over $\GF(p^\ell)$.
  \item\label{item:char2} If every $\Gamma$-graphic delta-matroid is representable over $\mathbb{F}$, then $\Gamma$ is isomorphic to $\mathbb{Z}_p^k$ and $\mathbb{F}$ is isomorphic to $\GF(p^{\ell})$ for a prime $p$ and positive integers $k$ and $\ell$.
\end{enumerate}
\end{corollary}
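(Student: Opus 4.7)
The plan is to derive both parts as essentially immediate consequences of Theorems~\ref{thm:repre1} and~\ref{thm:repre2}, together with the structure theorems for finite abelian groups and finite fields.

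For part~\ref{item:char1}, I would set $\mathbb{F} = \GF(p^\ell)$ and observe that $[\GF(p^\ell) : \GF(p)] = \ell \geq k$. Theorem~\ref{thm:repre1} then applies verbatim, yielding that every $\mathbb{Z}_p^k$-graphic delta-matroid is representable over $\GF(p^\ell)$. No further work is needed.

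For part~\ref{item:char2}, let $p$ be the characteristic of the finite field $\mathbb{F}$, so $\mathbb{F} \cong \GF(p^\ell)$ where $\ell = [\mathbb{F} : \GF(p)] \geq 1$. By Theorem~\ref{thm:repre2}, $\Gamma$ is an elementary abelian $p$-group. Since $\Gamma$ is finite of order at least $2$, we may view $\Gamma$ as a finite-dimensional nontrivial vector space over $\GF(p)$, and hence $\Gamma \cong \mathbb{Z}_p^k$ for some positive integer $k$. This produces the pair $(\mathbb{Z}_p^k, \GF(p^\ell))$ described in the statement.

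There is no real obstacle here; the only subtlety is to record that the prime $p$ appearing in $\mathbb{Z}_p^k$ must coincide with the characteristic of $\mathbb{F}$, which is forced by the conclusion of Theorem~\ref{thm:repre2}. The proof is little more than an unpacking of the two representability theorems, which is presumably why the authors flag it as easy and omit it.
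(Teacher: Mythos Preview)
Your proposal is correct and matches exactly the approach the paper has in mind: the authors state explicitly that the corollary follows from Theorems~\ref{thm:repre1} and~\ref{thm:repre2} and omit the proof as easy, and your argument is precisely the intended unpacking of those two theorems together with the standard structure results for finite fields and finite elementary abelian $p$-groups.
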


We suspect that the following could be the complete characterization.

\begin{conjecture}
  Let $\Gamma$ be a finite abelian group of order at least $2$ and $\mathbb{F}$ be a finite field.
  Then every $\Gamma$-graphic delta-matroid is representable over $\mathbb{F}$ if and only if $(\Gamma, \mathbb{F}) = (\mathbb{Z}_p^k, \GF(p^\ell))$ for some prime $p$ and positive integers $k \leq \ell$.
\end{conjecture}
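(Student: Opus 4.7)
The conjecture has two directions. The backward direction is immediate: if $(\Gamma,\mathbb{F})=(\mathbb{Z}_p^k,\GF(p^\ell))$ with $k\le\ell$, then $[\mathbb{F}:\GF(p)]=\ell\ge k$, so Theorem~\ref{thm:repre1}---equivalently, part~\ref{item:char1} of Corollary~\ref{cor:char}---delivers representability of every $\mathbb{Z}_p^k$-graphic delta-matroid over $\mathbb{F}$. All the content lies in the forward direction, and part~\ref{item:char2} of Corollary~\ref{cor:char} already yields that under the representability hypothesis one must have $\Gamma\cong\mathbb{Z}_p^k$ and $\mathbb{F}\cong\GF(p^\ell)$ for some prime $p$ and positive integers $k,\ell$. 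Thus the conjecture reduces to the single inequality $k\le\ell$, or equivalently: for every prime $p$ and every pair of positive integers $\ell<k$, one must produce a $\mathbb{Z}_p^k$-graphic delta-matroid that fails to be representable over $\GF(p^\ell)$.

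To produce such witnesses, the plan is to exhibit an explicit small $\mathbb{Z}_p^k$-labelled graph $(G,\gamma)$ whose feasible family genuinely exploits the full dimension of $\mathbb{Z}_p^k$. A natural starting point is to take $G$ on a small vertex set with $\gamma$ chosen so that a basis of $\mathbb{Z}_p^k$ appears among the values $\gamma(v)$, arranging the edge set so that the acyclic $\gamma$-nonzero sets realise essentially all $p^k-1$ nonzero component-sums. Given any symmetric or skew-symmetric representation of the resulting delta-matroid over $\GF(p^\ell)$, I would then analyse its nonsingular principal submatrices against the combinatorics forced by $(G,\gamma)$ and attempt to extract from its entries a $\GF(p)$-subspace of $\GF(p^\ell)$ of dimension at least $k$, contradicting $\ell<k$.

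The hard part will be carrying out this dimension argument \emph{intrinsically}, in terms of the delta-matroid rather than of any particular representation: Bouchet representability is invariant under simultaneous row/column operations and changes of pivot order, so the $k$-dimensional obstruction must come from a representation-free invariant of the feasible family. I would look to the proof of Theorem~\ref{thm:repre1} for guidance, since that construction must attach to each element of a $\GF(p)$-basis of $\mathbb{F}$ some combinatorial role in encoding $\mathbb{Z}_p^k$; making that correspondence \emph{tight}, so that $k$ independent directions are actually forced rather than merely permitted, is where I expect the real difficulty to be. A more modest intermediate target, which would already be informative, is to settle the smallest open case $p=2$, $k=2$, $\ell=1$ by producing a single $\mathbb{Z}_2^2$-graphic delta-matroid not representable over $\GF(2)$.
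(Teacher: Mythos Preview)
The statement you are attempting is labelled a \emph{conjecture} in the paper, not a theorem; the paper provides no proof. Your reduction is entirely correct: the backward direction is Theorem~\ref{thm:repre1}, and Corollary~\ref{cor:char}\ref{item:char2} (via Theorem~\ref{thm:repre2}) reduces the forward direction to showing $k\le\ell$. That residual inequality is precisely the content the authors leave open, and your proposal does not settle it either---what you have written from that point on is a research plan, not a proof.

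So there is no mismatch with the paper's argument to diagnose; rather, you have correctly located the boundary of what the paper establishes. Your instinct to look for a small explicit $\mathbb{Z}_p^k$-labelled graph whose delta-matroid forces a $k$-dimensional $\GF(p)$-subspace inside any representing field is reasonable, and the suggestion to attack the minimal case $(p,k,\ell)=(2,2,1)$ first is sensible. But be aware that nothing in the paper supplies the missing ingredient, and your outline does not yet contain a concrete mechanism for extracting the dimension lower bound from an arbitrary symmetric or skew-symmetric representation; until that mechanism is specified and verified, the forward direction remains open.
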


This paper is organized as follows. In Section~\ref{sec: preliminaries}, we review some terminologies and results on delta-matroids and graphic delta-matroids. 
In Section~\ref{sec: delta-matroids from group-labelled graphs}, we introduce $\Gamma$-graphic delta-matroids. We show that the class of $\Gamma$-graphic delta-matroids is closed under taking minors in Section~\ref{sec: minors of group-labelled graphs}.
In Section~\ref{sec: applications}, we present a polynomial-time algorithm to solve \textsc{Maximum Weight Acyclic $\gamma$-nonzero Set}, proving Theorem~\ref{thm:main_algo}.
We characterize even $\Gamma$-graphic delta-matroids in Section~\ref{sec: even gamma-graphic delta-matroids}.
In Section~\ref{sec: representations of Gamma-graphic delta-matroids}, we prove Theorems~\ref{thm:repre1} and~\ref{thm:repre2}.

\section{Preliminaries}\label{sec: preliminaries}

In this paper, all graphs are finite and may have parallel edges and loops.
A graph is \emph{simple} if it has neither loops nor parallel edges. For a graph $G$, \emph{contracting} an edge $e$ is an operation to obtain a new graph $G/e$ from $G$ by deleting $e$ and identifying ends of $e$. 
For a set $X$ and a positive integer $s$, let $\binom{X}{s}$ be the set of $s$-element subsets of $X$. For two sets $A$ and $B$, let $A\triangle B=(A-B)\cup(B-A)$. 
For a function $f : X \rightarrow Y$ and a subset $A \subseteq X$, we write $f|_A$ to denote the restriction of $f$ on $A$.

\paragraph{Delta-matroids.} Bouchet~\cite{Bouchet1987sym} introduced delta-matroids.
A \emph{delta-matroid} is a pair $M=(E, \mathcal{F})$ of a finite set $E$ and a nonempty set $\mathcal{F}$ of subsets of $E$ such that if $X,Y\in\mathcal{F}$ and $x\in X\triangle Y$, then there is $y\in X\triangle Y$ such that $X\triangle\{x,y\}\in\mathcal{F}$.
We write $E(M) = E$ to denote the \emph{ground set} of $M$.
An element of $\mathcal{F}$ is called a \emph{feasible} set.
An element of $E$ is a \emph{loop} of $M$ if it is not contained in any feasible set of~$M$.
An element of $E$ is a \emph{coloop} of $M$ if it is contained in every feasible set of $M$.

\paragraph{Minors.}
For a delta-matroid $M=(E, \mathcal{F})$ and a subset $X$ of $E$, we can obtain a new delta-matroid $M\triangle X=(E, \mathcal{F}\triangle X)$ from $M$ where $\mathcal{F}\triangle X=\{F\triangle X : F\in\mathcal{F}\}$.
This operation is called \emph{twisting} a set $X$ in $M$.
A delta-matroid $N$ is \emph{equivalent} to $M$ if $N=M\triangle X$ for some set $X$. 

If there is a feasible subset of $E-X$, then $M\setminus X=(E-X,\mathcal{F}\setminus X)$ is a delta-matroid where $\mathcal{F}\setminus X=\{F\in\mathcal{F} : F\cap X=\emptyset\}$. This operation of obtaining $M\setminus X$ is called the \emph{deletion} of $X$ in $M$.
A delta-matroid $N$ is a \emph{minor} of a delta-matroid $M$ if $N=M\triangle X\setminus Y$ for some subsets $X,Y$ of $E$.

A delta-matroid is \emph{normal} if $\emptyset$ is feasible. A delta-matroid is \emph{even} if $|X\triangle Y|$ is even for all feasible sets $X$ and $Y$.
It is easy to see that all minors of even delta-matroids are even.

The following theorem gives the minimal obstruction for even delta-matroids, which is implied by Bouchet~{\cite[Lemma 5.4]{Bouchet1989map}}.

\begin{theorem}[Bouchet~\cite{Bouchet1989map}]\label{thm: excluded minor for even delta-matroids}
  A delta-matroid is even if and only if it does not have a minor isomorphic to $(\{e\}, \{\emptyset, \{e\}\})$.
\end{theorem}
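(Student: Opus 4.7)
The forward direction is immediate: the single-element delta-matroid $(\{e\}, \{\emptyset, \{e\}\})$ has two feasible sets whose symmetric difference has size $1$, so it is not even, and the remark preceding the theorem states that every minor of an even delta-matroid is even.

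For the backward direction, suppose $M = (E, \mathcal{F})$ is not even. The plan is to first show that $M$ has two feasible sets differing by exactly one element, and then to exhibit the minor $(\{e\}, \{\emptyset, \{e\}\})$ by one twist followed by one deletion. The key lemma is: if $M$ is not even, then there exist $F, G \in \mathcal{F}$ with $|F \triangle G| = 1$. I would choose $F, G \in \mathcal{F}$ with $|F \triangle G|$ odd and minimum, and suppose for contradiction that $|F \triangle G| \geq 3$. Applying the symmetric exchange axiom to any $e \in F \triangle G$ yields some $f \in F \triangle G$ with $F \triangle \{e, f\} \in \mathcal{F}$. If $f = e$, then $\{e, f\} = \{e\}$, and the pair $(F, F \triangle \{e\})$ already achieves an odd symmetric difference of size $1 < |F \triangle G|$, contradicting minimality. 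If $f \neq e$, then since both $e, f \in F \triangle G$, toggling them in $F$ brings these two coordinates into agreement with $G$, so $(F \triangle \{e, f\}) \triangle G = (F \triangle G) \setminus \{e, f\}$, which is nonempty with odd size $|F \triangle G| - 2$, again contradicting minimality.

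Given such $F, G \in \mathcal{F}$ with $F \triangle G = \{e\}$, I twist by $F$ to form $M \triangle F$, in which both $\emptyset$ and $\{e\}$ are feasible, and then delete $E \setminus \{e\}$; this deletion is permitted because $\emptyset$ is a feasible set disjoint from $E \setminus \{e\}$. The resulting minor has ground set $\{e\}$, whose only subsets are $\emptyset$ and $\{e\}$, both feasible, so the minor is exactly $(\{e\}, \{\emptyset, \{e\}\})$. The main technical subtlety is the case $f = e$ of the exchange axiom, which relies on the convention built into Bouchet's formulation that $\{x, y\}$ collapses to the singleton $\{x\}$ when $x = y$; this is precisely the feature that makes the one-element delta-matroid available as an obstruction and is the heart of the argument.
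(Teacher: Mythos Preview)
Your proof is correct. Note, however, that the paper does not actually prove this theorem: it is stated as a known result implied by Bouchet~\cite[Lemma~5.4]{Bouchet1989map} and is simply cited without proof. So there is no argument in the paper to compare your approach against. Your argument is the standard one: use the exchange axiom to shrink the minimum odd symmetric difference down to a single element, then twist and delete to isolate the one-element obstruction.
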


\begin{lemma}
\label{lem:delandcont}
Let $N$ be a minor of a delta-matroid $M$ such that $|E(M)|>|E(N)|$. Then there exists an element $e\in E(M)- E(N)$ such that $N$ is a minor of $M\setminus e$ or a minor of $M\triangle\{e\}\setminus e$.
\end{lemma}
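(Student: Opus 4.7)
Since $N$ is a minor of $M$, there exist $X, Y \subseteq E(M)$ with $N = M \triangle X \setminus Y$, and then $E(N) = E(M) - Y$. The hypothesis $|E(M)| > |E(N)|$ forces $Y \neq \emptyset$, so I can pick any $e \in Y = E(M) - E(N)$. The plan is to split on whether $e \in X$ and show that in each case the twist and the deletion of $e$ can be peeled off from the rest of the minor operation.

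Suppose first that $e \notin X$. Because $M \triangle X \setminus Y$ is a well-defined delta-matroid, it has at least one feasible set, which corresponds to some $F \in \mathcal{F}(M)$ with $(F \triangle X) \cap Y = \emptyset$; since $e \in Y$ and $e \notin X$, this forces $e \notin F$, so $M \setminus e$ is itself a well-defined delta-matroid. I would then verify by a direct comparison of the feasible families that
\[
M \triangle X \setminus Y \;=\; (M \setminus e)\triangle X \setminus (Y - e),
\]
using the fact that for $e \notin X$ a feasible set of $M \triangle X$ avoids $e$ exactly when the corresponding feasible set of $M$ avoids $e$. This exhibits $N$ as a minor of $M \setminus e$.

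Now suppose $e \in X$. I would reduce this to the previous case by rewriting
\[
N \;=\; M \triangle X \setminus Y \;=\; \bigl(M \triangle \{e\}\bigr) \triangle (X - e) \setminus Y,
\]
and applying the first case to $M' := M \triangle \{e\}$ with twist set $X' := X - e$ (which does not contain $e$) and the same deletion set $Y$. The first case gives that $M' \setminus e$ is well-defined and that $N$ is a minor of $M' \setminus e = M \triangle \{e\} \setminus e$.

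The argument is almost entirely bookkeeping; the only point requiring any care is checking that the intermediate objects $M \setminus e$ or $M \triangle \{e\} \setminus e$ are actually delta-matroids (that is, that the relevant family of feasible sets is nonempty), and this is exactly what the nonemptiness of $\mathcal{F}(N)$ guarantees in each case. So the main obstacle, such as it is, is keeping track of the well-definedness condition for deletions; once that is handled, the identity $(M \setminus e) \triangle X = (M \triangle X) \setminus e$ for $e \notin X$ and the observation that twisting by $X$ equals twisting first by $\{e\} \subseteq X$ and then by $X - e$ immediately yield the lemma.
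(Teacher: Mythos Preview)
Your proof is correct and follows essentially the same approach as the paper's: pick $e\in Y$, split on whether $e\in X$, and in each case rewrite $N$ as $(M\setminus e)\triangle X\setminus(Y-\{e\})$ or $(M\triangle\{e\}\setminus e)\triangle(X-\{e\})\setminus(Y-\{e\})$. The only difference is that you take extra care to verify that $M\setminus e$ (respectively $M\triangle\{e\}\setminus e$) is actually a well-defined delta-matroid---a point the paper's proof glosses over but which is indeed required by the paper's own definition of deletion.
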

\begin{proof}
Since $N$ is a minor of $M$ and $|E(M)|>|E(N)|$, there exist $X,Y\subseteq E$ such that $N=M\triangle X\setminus Y$ and $|Y|\geq 1$. So there exists $e\in Y = E(M)- E(N)$. If $e\notin X$, then $N=(M\setminus e)\triangle X\setminus(Y-\{e\})$ and so $N$ is a minor of $M\setminus e$. So we may assume that $e\in X$. Then $N=(M\triangle\{e\}\setminus e)\triangle(X\setminus\{e\})\setminus(Y-\{e\})$ and so $N$ is a minor of $M\triangle\{e\}\setminus e$.
\end{proof}

\paragraph{Representable delta-matroids.}
For an $R\times C$ matrix $A$ and subsets $X$ of $R$ and $Y$ of $C$, we write $A[X,Y]$ to denote the $X \times Y$ submatrix of $A$.
For an $E\times E$ square matrix $A$ and a subset $X$ of $E$, we write $A[X]$ to denote $A[X,X]$, which is called an $X\times X$ \emph{principal} submatrix of $A$.

For an $E\times E$ square matrix $A$, let $\mathcal{F}(A)=\left\{ X\subseteq E : A[X]\text{ is nonsingular} \right\}$. We assume that $A[\emptyset]$ is nonsingular and so $\emptyset\in\mathcal{F}(A)$.
Bouchet~\cite{Bouchet1988repre} proved that, $(E, \mathcal{F}(A))$ is a delta-matroid if $A$ is an $E\times E$ symmetric or skew-symmetric matrix.
A delta-matroid $M=(E,\mathcal{F})$ is \emph{representable over} a field $\mathbb{F}$ if $\mathcal{F}=\mathcal{F}(A)\triangle X$ for a symmetric or skew-symmetric matrix $A$ over $\mathbb{F}$ and a subset $X$ of $E$. Since $\emptyset\in\mathcal{F}(A)$, it is natural to define representable delta-matroids with twisting so that the empty set is not necessarily feasible in representable delta-matroids. 

A delta-matroid is \emph{binary} if it is representable over $\GF(2)$.
Note that all diagonal entries of a skew-symmetric matrix are zero, even if the characteristic of a field is $2$.

\begin{proposition}[Bouchet~\cite{Bouchet1988repre}]
\label{prop: normal repre}
  Let $M=(E, \mathcal{F})$ be a delta-matroid.
  Then $M$ is normal and representable over a field $\mathbb{F}$ if and only if there is an $E \times E$ symmetric or skew-symmetric matrix $A$ over $\mathbb{F}$ such that $\mathcal{F}=\mathcal{F}(A)$.
\end{proposition}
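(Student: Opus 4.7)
The plan is to handle the two implications separately, with the reverse direction being immediate from the definitions and the forward direction reducing to the principal pivot transform.

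For the reverse implication, suppose $\mathcal{F} = \mathcal{F}(A)$ for some symmetric or skew-symmetric matrix $A$ over $\mathbb{F}$. The convention that $A[\emptyset]$ is nonsingular gives $\emptyset \in \mathcal{F}(A) = \mathcal{F}$, so $M$ is normal; writing $\mathcal{F} = \mathcal{F}(A) \triangle \emptyset$ then exhibits $M$ as representable over $\mathbb{F}$.

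For the forward implication, suppose $M$ is normal and representable, so $\mathcal{F} = \mathcal{F}(B) \triangle X$ for some symmetric or skew-symmetric matrix $B$ over $\mathbb{F}$ and some $X \subseteq E$. Normality gives $\emptyset \in \mathcal{F}$, equivalently $X \in \mathcal{F}(B)$, so $B[X]$ is nonsingular. The key step is to apply the \emph{principal pivot transform} of $B$ at $X$: with respect to the block decomposition induced by the partition $(X, E-X)$, define
\[
A = \begin{pmatrix} B[X]^{-1} & -B[X]^{-1}\, B[X, E-X] \\ B[E-X, X]\, B[X]^{-1} & B[E-X] - B[E-X, X]\, B[X]^{-1}\, B[X, E-X] \end{pmatrix}.
\]
Two classical facts about this construction suffice: (i) $A$ is symmetric whenever $B$ is symmetric, and skew-symmetric whenever $B$ is skew-symmetric; and (ii) for every $Y \subseteq E$, $A[Y]$ is nonsingular if and only if $B[X \triangle Y]$ is nonsingular. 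Granting both, $\mathcal{F}(A) = \mathcal{F}(B) \triangle X = \mathcal{F}$, which is the required representation.

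The main technical work is verifying (i) and (ii). For (ii), the standard approach is a Schur-complement calculation: block-determinant identities yield $\det A[Y] \cdot \det B[X] = \pm \det B[X \triangle Y]$ (write $Y = (Y \cap X) \cup (Y - X)$ and expand the appropriate submatrix), so nonsingularity transfers between the two sides. For (i) in the symmetric case, transposing the block form directly gives $A^{\mathsf T} = A$, using that $(B[X]^{-1})^{\mathsf T} = B[X]^{-1}$. In the skew-symmetric case, the invertibility and skew-symmetry of $B[X]$ force $B[X]^{-1}$ to be skew-symmetric, which in turn makes the two off-diagonal blocks of $A$ negative transposes of each other and the lower-right Schur complement skew-symmetric; in characteristic $2$, where skew-symmetric additionally requires the diagonal to vanish, a short separate check that the diagonal entries of the pivoted matrix are zero is the one place where care is needed. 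Once (i) and (ii) are established, the proposition follows.
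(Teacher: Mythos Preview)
The paper does not supply a proof of this proposition; it is stated with attribution to Bouchet, and the paragraph on pivoting that follows simply records Tucker's theorem (that $(A*X)[Y]$ is nonsingular if and only if $A[X\triangle Y]$ is) without assembling it into a formal argument. Your approach---pivot $B$ at the feasible set $X$ to obtain a matrix $A$ with $\mathcal{F}(A)=\mathcal{F}(B)\triangle X$---is exactly the standard one, and your fact~(ii) is precisely Tucker's result as quoted in the paper.

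There is, however, a genuine slip in your verification of~(i). With your sign convention, $A$ is \emph{not} symmetric when $B$ is symmetric. Writing $\alpha=B[X]$, $\beta=B[X,E-X]$, $\gamma=B[E-X,X]$, symmetry of $B$ gives $\gamma=\beta^{\mathsf T}$ and $(\alpha^{-1})^{\mathsf T}=\alpha^{-1}$, so the $(1,2)$-block of $A^{\mathsf T}$ is $(\gamma\alpha^{-1})^{\mathsf T}=\alpha^{-1}\beta$, whereas the $(1,2)$-block of $A$ is $-\alpha^{-1}\beta$; thus $A^{\mathsf T}\neq A$. (Your formula \emph{does} preserve skew-symmetry, and the paper's $A*X$, which has the opposite off-diagonal signs, likewise preserves skew-symmetry but not symmetry.) The fix is painless: in the symmetric case use instead
\[
A'=\begin{pmatrix} -\alpha^{-1} & \alpha^{-1}\beta \\ \gamma\alpha^{-1} & \delta-\gamma\alpha^{-1}\beta \end{pmatrix},
\]
which is genuinely symmetric and differs from the Tucker pivot $B*X$ only by right-multiplication by $\diag(-I_X,I_{E-X})$; hence $\det A'[Y]=(-1)^{|Y\cap X|}\det((B*X)[Y])$, so $\mathcal{F}(A')=\mathcal{F}(B*X)=\mathcal{F}(B)\triangle X$ and the argument goes through.
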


\begin{lemma}[Geelen~{\cite[page 27]{Geelen1996thesis}}]
\label{lem:skew}
Let $M$ be a delta-matroid representable over a field $\mathbb{F}$. Then $M$ is even if and only if $M$ is representable by a skew-symmetric matrix over $\mathbb{F}$.
\end{lemma}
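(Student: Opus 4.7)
I would prove the two directions separately.

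\textbf{``If'' direction.} Suppose $\mathcal{F}(M) = \mathcal{F}(A) \triangle X_0$ for a skew-symmetric matrix $A$ over $\mathbb{F}$. Every principal submatrix $A[X]$ is skew-symmetric, and a skew-symmetric matrix of odd order is singular: in characteristic $\neq 2$ this is immediate from $\det(A[X]) = \det(-A[X]^{T}) = (-1)^{|X|}\det(A[X])$, and in characteristic $2$ it follows from the Leibniz formula by pairing each $\sigma$ with $\sigma^{-1}$ (using symmetry of $A$ and $\sgn=1$), so that only fixed-point-free involutions survive, of which none exist in odd order. Hence every $F \in \mathcal{F}(A)$ has even cardinality, so $|X \triangle Y|$ is even for all $X,Y \in \mathcal{F}(A)$, and twisting by $X_0$ preserves this parity.

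\textbf{``Only if'' direction, reduction.} Assume $M$ is even and representable. Fix any $F \in \mathcal{F}(M)$; then $M' = M \triangle F$ is normal, still even (evenness is twist-invariant), and still representable. By Proposition~\ref{prop: normal repre}, $\mathcal{F}(M') = \mathcal{F}(A)$ for some symmetric or skew-symmetric $A$ over $\mathbb{F}$; if $A$ is already skew-symmetric, we are done, so assume $A$ is symmetric. Since every singleton must be infeasible ($|\{e\}\triangle\emptyset|=1$ would contradict evenness), $A_{ee}=0$ for every $e$. When $\charac(\mathbb{F})=2$, a symmetric matrix with zero diagonal is skew-symmetric and the proof is complete.

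\textbf{Main obstacle: $\charac(\mathbb{F}) \neq 2$.} Here the naive sign flip $B_{ij} = \sgn(j-i)A_{ij}$ does not preserve $\mathcal{F}$: a direct $4 \times 4$ calculation with a symmetric zero-diagonal block shows $\det(A[\{1,2,3,4\}])$ and $\det(B[\{1,2,3,4\}])$ may factor as $(af-cd)^2$ and $(af+cd)^2$ respectively, vanishing on genuinely different parameters. My plan is to construct the skew-symmetric matrix inductively on $|E|$ using Lemma~\ref{lem:delandcont}: every proper minor of $M'$ is again even and representable over $\mathbb{F}$, so by induction admits a skew-symmetric representation. For a carefully chosen $e \in E$, the inductive representations of $M' \setminus e$ and $M' \triangle \{e\} \setminus e$ can be normalized to agree on $E - e$, and lifting them to a single skew-symmetric matrix on $E$ amounts to choosing signs for the new row at $e$ (the column being forced by skew-symmetry). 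The magnitudes are pinned down by the $2$-element feasibility tests, so only sign choices remain, and consistency across all even-sized feasible sets through $e$ is a Pfaffian-orientation-type condition on the support graph of $A$. The crux is to show that evenness of $\mathcal{F}(A)$, together with the excluded-minor characterization of even delta-matroids (Theorem~\ref{thm: excluded minor for even delta-matroids}) applied to the minors obtained by Lemma~\ref{lem:delandcont}, forces this sign-consistency condition to be solvable.
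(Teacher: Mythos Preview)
The paper does not give its own proof of this lemma; it is stated with a citation to Geelen's thesis and used as a black box. So there is no proof in the paper to compare against, and I will assess your argument on its own merits.

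Your ``if'' direction is fine: the parity argument for skew-symmetric principal minors (including the characteristic-$2$ case via the pairing $\sigma \leftrightarrow \sigma^{-1}$ and the zero-diagonal convention) is correct, and evenness is clearly preserved under twisting.

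Your ``only if'' direction is correct through the reduction: twisting to a normal even $M'$, invoking Proposition~\ref{prop: normal repre}, and observing that the diagonal of $A$ must vanish (so that in characteristic~$2$ you are done) is all sound. The genuine gap is in your treatment of characteristic $\neq 2$. Three specific problems:
\begin{enumerate}
\item Your phrase ``the inductive representations of $M'\setminus e$ and $M'\triangle\{e\}\setminus e$ can be normalized to agree on $E-\{e\}$'' cannot be right as written: these are \emph{different} delta-matroids on $E-\{e\}$ (unless $e$ is a loop or coloop), so no single matrix represents both. What you presumably intend is to take a skew-symmetric $B$ representing $M'\setminus e$ and extend it by one row/column for $e$; but then the skew-symmetric representation of $M'\triangle\{e\}\setminus e$ produced by the inductive hypothesis lives on a matrix unrelated to $B$, and you give no mechanism to transport information from it into the extension of $B$.
\item The assertion that ``magnitudes are pinned down by the $2$-element feasibility tests'' is false. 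The $2$-element tests tell you only which off-diagonal entries of the new row are zero or nonzero; they do not determine the nonzero values. Different choices of magnitudes can change the (non)singularity of larger principal submatrices, so you cannot reduce the problem to a pure sign choice.
\item Even granting a sign-choice formulation, you do not define the ``Pfaffian-orientation-type condition'' or prove that evenness forces it to be satisfiable. Invoking Theorem~\ref{thm: excluded minor for even delta-matroids} on minors tells you each minor is even, which you already knew; it does not by itself produce the compatibility of sign choices across all even-size subsets containing $e$.
\end{enumerate}
As it stands, the characteristic $\neq 2$ case is a sketch of a strategy rather than a proof, and the strategy has a structural hole at the point where the two inductive pieces are supposed to be glued. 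You would need either a direct construction of the skew-symmetric matrix from the symmetric one (this is what Geelen does), or a much more careful inductive argument that controls the actual matrix entries, not just feasibility, through the induction.
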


\paragraph{Pivoting.} For a finite set $E$ and a symmetric or skew-symmetric $E \times E$ matrix $A$, if $A$ is represented by
\[
A=
\begin{blockarray}{ccc}
  & X & Y \\
 \begin{block}{c(cc)}
 X& \alpha &\beta \\
 Y & \gamma & \delta \\
\end{block}
\end{blockarray}
\]
after selecting a linear ordering of $E$
and $A[X]=\alpha$  is nonsingular, then let
\[
A*X=
\begin{blockarray}{ccc}
  & X & Y \\
 \begin{block}{c(cc)}
 X& \alpha^{-1} &\alpha^{-1}\beta \\
 Y & -\gamma \alpha^{-1} & \delta-\gamma\alpha^{-1}\beta \\
\end{block}
\end{blockarray}
\]
This operation is called \emph{pivoting}. Tucker~\cite{Tucker1960matrix} proved that when $A[X]$ is nonsingular, $A*X[Y]$ is nonsingular if and only if $A[X\triangle Y]$ is nonsingular for each subset $Y$ of $E$.
Hence, if $X$ is a feasible set of a delta-matroid $M=(E, \mathcal{F}(A))$, then $M\triangle X=(E, \mathcal{F}(A*X))$.
It implies that all minors of delta-matroids representable over a field $\mathbb{F}$ are representable over $\mathbb{F}$~\cite{Bouchet1991}.

\paragraph{Greedy algorithm.}
Let $M = (E, \mathcal{F})$ be a set system such that $E$ is finite and $\mathcal{F}\neq\emptyset$.
A pair $(X,Y)$ of disjoint subsets $X$ and $Y$ of $E$ is \emph{separable} in~$M$ if there exists a set $F\in\mathcal{F}$ such that $X \subseteq F$ and $Y \cap F = \emptyset$.
The following theorem characterizes delta-matroids in terms of a greedy algorithm. 
Note that this greedy algorithm requires an oracle which answers whether a pair $(X,Y)$ of disjoint subsets $X$ and $Y$ of $E$ is separable in $M$.
\begin{theorem}[Bouchet~\cite{Bouchet1987sym}; see Moffatt~\cite{Moffatt2019delta}]
  \label{thm:greedy_algo}
  Let $M = (E, \mathcal{F})$ be a set system such that $E$ is finite and $\mathcal{F}\neq\emptyset$.
  Then $M$ is a delta-matroid if and only if the symmetric greedy algorithm in Algorithm~\ref{algo:greedy_algo} gives a set $F \in \mathcal{F}$ maximizing $\sum_{e\in F} w(e)$ for each $w : E \rightarrow \mathbb{R}$. 
\end{theorem}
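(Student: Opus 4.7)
The plan is to prove both directions of the equivalence: the forward direction by induction on $|E|$ using the symmetric exchange axiom, and the reverse direction by contraposition, constructing a weight function that forces any violation of exchange to produce a suboptimal greedy output.

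For the forward direction, assume $M = (E, \mathcal{F})$ is a delta-matroid and induct on $|E|$, with the base case $|E| = 0$ immediate. Let $e \in E$ be the element of largest $|w(e)|$, processed first by the greedy algorithm, and assume without loss of generality that $w(e) \geq 0$, so the algorithm commits $e$ to the partial solution iff $(\{e\}, \emptyset)$ is separable. I would argue there is an optimal feasible set $F^{*}$ consistent with this first decision. If some optimal $F^{*}$ disagrees with the decision at $e$, pick a feasible set $F_0$ witnessing the greedy choice; then $e \in F_0 \triangle F^{*}$, so the exchange axiom applied to $F_0$ and $F^{*}$ produces $f \in F_0 \triangle F^{*}$ with $F_0 \triangle \{e, f\} \in \mathcal{F}$. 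The new feasible set agrees with the greedy decision at $e$, and its weight differs from $F^{*}$ by a combination of $\pm w(e)$ and $\mp w(f)$; since $|w(f)| \leq |w(e)|$ by the ordering and the signs align with the greedy rule, the new set is at least as heavy as $F^{*}$ and hence optimal. Once the first decision is justified, the residual problem on $E - \{e\}$ is governed by a minor of $M$ (deletion of $e$ if $e$ was excluded, and the analogous operation $M \triangle \{e\} \setminus e$ if $e$ was included), which is again a delta-matroid; the induction hypothesis finishes the argument.

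For the reverse direction, I argue contrapositively. Suppose the symmetric exchange fails, so there exist $X, Y \in \mathcal{F}$ and $x \in X \triangle Y$ with $X \triangle \{x, f\} \notin \mathcal{F}$ for every $f \in X \triangle Y$. The idea is to design a weight $w$ so that the greedy trajectory is forced: assign $x$ a weight of very large absolute value with $\sgn(w(x))$ chosen so that the greedy algorithm commits $x$ to the $X$-side; assign the remaining elements of $X \triangle Y$ weights of strictly smaller absolute value, with signs pointing toward $X$; assign elements of $X \cap Y$ large positive weights; and assign all other elements of $E$ weights close to zero, chosen so that separability forces them to take the value they have in $X$. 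With such weights, the value of $X$ strictly exceeds that of any set obtained from $X$ by a single double-swap involving $x$ — which is precisely the set of sets the greedy algorithm could produce once it commits to $x$ — yielding a feasible set (namely $X$) of strictly greater weight than the greedy output and contradicting optimality.

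The main obstacle is the forward induction step, specifically verifying that after the greedy algorithm commits at $e$, the residual set system on $E - \{e\}$ inherits the delta-matroid axiom and that its separability oracle is implemented by the separability oracle of $M$ restricted appropriately; this is where the equivalence between greedy decisions and minor operations $M \setminus e$ and $M \triangle \{e\} \setminus e$ (Lemma~\ref{lem:delandcont}) is essential. A secondary subtlety is the reverse construction: the weights on $X \triangle Y$ must be spaced finely enough that ties do not allow the greedy algorithm to slip into an alternative feasible set outside the scope of the failed exchange, while those outside $X \cup Y$ must be small enough not to perturb the dominant comparison between $w(X)$ and any competing output.
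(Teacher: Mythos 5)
The paper does not actually prove this theorem: it is quoted from Bouchet and from Moffatt's survey, so there is no internal proof to compare against. Judged on its own, your forward direction is essentially the standard argument and is sound up to one slip: after applying the exchange axiom at $e$ you should form $F^{*}\triangle\{e,f\}$ (perturbing the optimal set), not $F_{0}\triangle\{e,f\}$; only the former has weight $w(F^{*})\pm w(e)\mp w(f)$, and since $|w(f)|\le|w(e)|$ and the sign of $w(e)$ matches the greedy decision, it is again optimal and consistent with that decision. The reduction of the remaining steps to the minor $M\setminus e$ or $M\triangle\{e\}\setminus e$, via the identity that $(A,B)$ is separable in $M\triangle\{e\}\setminus e$ iff $(A\cup\{e\},B)$ is separable in $M$, then closes the induction. (Lemma~\ref{lem:delandcont} is not the relevant tool here; what you need is the elementary fact that twisting always, and deletion when some feasible set avoids $e$, preserve the delta-matroid axiom.)

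The reverse direction has a genuine gap. First, the calibration of $w(x)$ is wrong: if $|w(x)|$ is ``very large,'' the gain from flipping at $x$ can outweigh the losses the failed exchange forces elsewhere, and greedy may be optimal. Concretely, take $E=\{1,2,3\}$, $\mathcal{F}=\{\emptyset,\{1,2,3\}\}$, which violates the axiom at $X=\emptyset$, $Y=\{1,2,3\}$, $x=1$; with $w(1)=100$, $w(2)=w(3)=-1$ greedy outputs $\{1,2,3\}$, which \emph{is} the maximum-weight feasible set. The correct construction needs $|w(x)|$ strictly between one and two units (where a ``unit'' is the common magnitude assigned to $X\triangle Y-\{x\}$), so that $x$ is processed before those elements but the gain at $x$ cannot pay for the at least two further flips that infeasibility of every $X\triangle\{x,f\}$, $f\in X\triangle Y$, forces. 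Second, the sign of $w(x)$ must lure greedy \emph{away} from $X$ (toward agreement with $Y$ at $x$), not ``to the $X$-side'': if greedy's preferred option at $x$ agrees with $X$, that option is separable (witnessed by $X$ itself), greedy takes it, and nothing goes wrong. Third, your claim that the sets greedy can produce after committing at $x$ are ``precisely'' the double-swaps $X\triangle\{x,f\}$ is backwards: those are exactly the sets the hypothesis declares \emph{infeasible}, so greedy cannot output them; its output is some feasible $X\triangle S$ with $x\in S\subseteq X\triangle Y$ and hence $|S|\ge 3$, and the weight comparison must be made against that family. Finally, the elements of $E-(X\cup Y)$ must receive large \emph{negative} weights (processed early, locked out), not weights ``close to zero'': processed last with tiny weights, greedy may be forced to include such an element $e_0$, and then its output can be $X\triangle\{x,e_0\}$ with $e_0\notin X\triangle Y$, a set about which the failed axiom says nothing.
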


\begin{algorithm}
  \caption{Symmetric greedy algorithm}\label{algo:greedy_algo}
  \begin{algorithmic}[1]
    \Function{Symmetric Greedy Algorithm}{$M, w$}
    \Comment{$M=(E,\mathcal{F})$ and $w: E \rightarrow \mathbb{R}$}
      \State Enumerate $E = \{e_1, e_2, \dots, e_n\}$ such that $|w(e_1)| \geq |w(e_2)| \geq \dots \geq |w(e_n)|$
      \State $X \gets \emptyset$ and $Y \gets \emptyset$
      \For{$i \gets 1$ \textbf{to} $n$}
        \If{$w(e_i) \geq 0$}
          \If{$(X \cup \{e_i\}, Y$) is separable}
            \State $X \gets X \cup \{e_i\}$
          \Else
            \State $Y \gets Y \cup \{e_i\}$
          \EndIf
        \Else
          \If{$(X, Y \cup \{e_i\})$ is separable}
            \State $Y \gets Y \cup \{e_i\}$
          \Else
            \State $X \gets X \cup \{e_i\}$
      \EndIf
        \EndIf
      \EndFor
    \EndFunction
    \State \Return{X} \Comment{$X \in \mathcal{F}$}
  \end{algorithmic}
\end{algorithm}

\paragraph{Graphic delta-matroids.}
Oum~\cite{Oum2009circle} introduced graphic delta-matroid.
A \emph{graft} is a pair $(G,T)$ of a graph $G$ and a subset $T$ of $V(G)$.
A subgraph $H$ of $G$ is \emph{$T$-spanning} in $G$ if $V(H) = V(G)$, for each component $C$ of $H$, either
\begin{enumerate}[label=(\roman*)]
  \item $|V(C) \cap T|$ is odd, or
  \item $V(C) \cap T = \emptyset$ and $G[V(C)]$ is a component of $G$.
\end{enumerate}
An edge set $F$ of $G$ is \emph{$T$-spanning} in $G$ if a subgraph $(V(G), F)$ is $T$-spanning in $G$.
For a graft $(G,T)$, let $\mathcal{G}(G,T) = (E(G), \mathcal{F})$ where $\mathcal{F}$ is the set of acyclic $T$-spanning sets in~$G$.
Oum~\cite{Oum2009circle} proved that $\mathcal{G}(G,T)$ is an even binary delta-matroid.
A delta-matroid is \emph{graphic} if it is equivalent to $\mathcal{G}(G,T)$ for a graft $(G,T)$.

\section{Delta-matroids from group-labelled graphs}\label{sec: delta-matroids from group-labelled graphs}
Let $\Gamma$ be an abelian group. 
A \emph{$\Gamma$-labelled graph} $(G,\gamma)$ is a pair of a graph $G$ and a map $\gamma : V(G)\rightarrow\Gamma$.
We say $\gamma \equiv 0$ if $\gamma(v) = 0$ for all $v \in V(G)$.
A $\Gamma$-labelled graph $(G, \gamma)$ and a $\Gamma'$-labelled graph $(G', \gamma')$ are \emph{isomorphic} if there are a graph isomorphism $f$ from $G$ to $G'$ and a group isomorphism $\phi: \Gamma \rightarrow \Gamma'$ such that $\phi ( \gamma (v)) = \gamma'(f(v))$ for each $v \in V(G)$.

A subgraph $H$ of $G$ is \emph{$\gamma$-nonzero} if, for each component $C$ of $H$, 
\begin{enumerate}[label=\rm(G\arabic*)]
\item\label{item:def1} $\sum_{v\in V(C)} \gamma(v) \neq 0$ or $\gamma|_{V(C)}\equiv 0$, and
\item\label{item:def2} if $\gamma|_{V(C)} \equiv 0$, then $G[V(C)]$ is a component of $G$.
\end{enumerate}
An edge set $F$ of $E(G)$ is \emph{$\gamma$-nonzero} in $G$ if a subgraph $(V(G),F)$ is $\gamma$-nonzero. An edge set $F$ of $E(G)$ is \emph{acyclic} in $G$ if a subgraph $(V(G),F)$ has no cycle.

For an abelian group $\Gamma$ and a $\Gamma$-labelled graph $(G,\gamma)$, let $\mathcal{F}$ be the set of acyclic $\gamma$-nonzero sets in~$G$.
Now we are ready to show Theorem~\ref{thm:delta}, which proves that $(E(G),\mathcal{F})$ is a delta-matroid.
We denote $(E(G),\mathcal{F})$ by $\mathcal{G}(G,\gamma)$.
A delta-matroid $M$ is \emph{$\Gamma$-graphic} if there exist a $\Gamma$-labelled graph $(G, \gamma)$ and $X\subseteq E(G)$ such that $M=\mathcal{G}(G, \gamma)\triangle X$.

\begin{theorem1}
  Let $\Gamma$ be an abelian group and $(G,\gamma)$ be a $\Gamma$-labelled graph. 
  If $\mathcal{F}$ is the set of acyclic $\gamma$-nonzero sets in $G$, then the following hold.
  \begin{enumerate}[label=\rm(\arabic*)]
  \item\label{item:d1} $\mathcal{F}\neq\emptyset$.
  \item\label{item:d2} For $X,Y\in\mathcal{F}$ and $e\in X\triangle Y$, there exists $f\in X\triangle Y$ such that $X\triangle\{e,f\}\in\mathcal{F}$.
  \end{enumerate}
\end{theorem1}

\begin{proof}%
By considering each component, we may assume that $G$ is connected.
If $\gamma \equiv 0$, then we choose a vertex $v$ of $G$ and a map $\gamma' : V(G) \rightarrow \Gamma$ such that $\gamma'(u) \neq 0$ if and only if $u = v$.
Then the set of acyclic $\gamma$-nonzero sets in $G$ is equal to the set of acyclic $\gamma'$-nonzero sets in $G$.
Hence, we can assume that $\gamma$ is not identically zero.
Therefore, a subgraph $H$ of $G$ is $\gamma$-nonzero if and only if $\sum_{u\in V(C)} \gamma(u) \neq 0$ for each component $C$ of $H$.

Let us first prove~\ref{item:d1}, stating that $\mathcal{F} \neq \emptyset$.
Let $S=\{v\in V(G):\gamma(v)\neq 0\}$ and $T$ be a spanning tree of $G$.
Then by the assumption, we have $S\neq\emptyset$.
We may assume that $\sum_{u\in V(G)}\gamma(u)=0$ because otherwise $E(T)$ is acyclic $\gamma$-nonzero in $G$.
Let $e$ be an edge of $T$ such that one of two components $C_1$ and $C_2$ of $T\setminus e$ has exactly one vertex in $S$.
Then $\sum_{u\in V(C_1)} \gamma(u) = - \sum_{u \in V(C_2)} \gamma(u) \neq 0$.
So $E(T) - \{e\}$ is acyclic $\gamma$-nonzero in $G$, and \ref{item:d1} holds.

Now let us prove~\ref{item:d2}.
We proceed by induction on $|E(G)|$.
It is obvious if $|E(G)|=0$.
If there is an edge $g = vw$ in $X\cap Y$, then let $\gamma' : V(G / g) \rightarrow \Gamma$ such that, for each vertex $x$ of $G/g$,
\[
  \gamma'(x)=
  \begin{cases}
    \gamma(v)+\gamma(w) & \text{if $x$ is the vertex of $G/g$ corresponding to $g$,} \\
    \gamma(x)  & \text{otherwise.}
  \end{cases}
\] 
Then both $X - \{g\}$ and $Y - \{g\}$ are acyclic $\gamma'$-nonzero sets in $G/g$.
Let $e \in (X - \{g\}) \triangle (Y - \{g\})=X\triangle Y$.
By the induction hypothesis, there exists $f \in X \triangle Y$ such that $(X - \{g\})\triangle\{e,f\}$ is an acyclic $\gamma'$-nonzero set in $G/g$.

We now claim that $X \triangle \{e,f\}$ is an acyclic $\gamma$-nonzero set in $G$.
It is obvious that $X \triangle \{e,f\}$ is acyclic in $G$.
If $\gamma' \equiv 0$, then $\gamma(v) = -\gamma(w) \neq 0$ and $\gamma(u) = 0$ for every $u$ in $V(G) - \{v,w\}$.
Then $X$ is not $\gamma$-nonzero, contradicting our assumption.
Hence, $\gamma' \not\equiv 0$ and let $C$ be a component of $(V(G), X \triangle \{e,f\})$.
If $C$ contains~$g$, then $\sum_{u \in V(C)} \gamma(u) = \sum_{u \in V(C/g)} \gamma'(u) \neq 0$.
If $C$ does not contain~$g$, then $\sum_{u \in V(C)} \gamma(u) = \sum_{u \in V(C)} \gamma'(u) \neq 0$.
It implies that $X \triangle \{e,f\}$ is $\gamma$-nonzero in $G$, so the claim is verified.

Therefore we may assume that $X \cap Y = \emptyset$.
Let $H_1 = (V(G), X)$ and $H_2 = (V(G), Y)$.
\begin{case}
$e\in X$.
\end{case}
Let $C$ be the component of $H_{1}$ containing $e$ and $C_{1}$, $C_{2}$ be two components of $C\setminus e$.
  If both $\sum_{u\in V(C_{1})}\gamma(u)$ and $\sum_{u\in V(C_{2})}\gamma(u)$ are nonzero, then $X\triangle\{e\}$ is acyclic $\gamma$-nonzero and so we can choose $f=e$.
 So we may assume that $\sum_{u\in V(C_{1})}\gamma(u)=0$ and therefore
\[
\sum_{u\in V(C_{2})}\gamma(u)=\sum_{u\in V(C)}\gamma(u)-\sum_{u\in V(C_{1})}\gamma(u)\neq 0.
\]

  If there exists $f\in Y$ joining a vertex in $V(C_{1})$ to a vertex in $V(G)-V(C_{1})$, then $X\triangle\{e,f\}$ is acyclic $\gamma$-nonzero.
 Therefore, we may assume that there is a component $D_{1}$ of $H_{2}$ such that $V(D_{1})\subseteq V(C_{1})$. 
Since $\sum_{u\in V(D_{1})}\gamma(u)\neq 0$, there is a vertex $x$ of $D_{1}$ such that $\gamma(x)\neq 0$. So $\gamma|_{V(C_{1})}\not\equiv 0$ and there is an edge $f$ of $C_{1}$ such that one of the components of $C_{1}\setminus f$, say $U$, has exactly one vertex $v$ with $\gamma(v)\neq 0$. If $U'$ is the component of $C_{1}\setminus f$ other than $U$, then $\sum_{u\in V(U')}\gamma(u)=-\sum_{u\in V(U)}\gamma(u)\neq 0$.
So $X\triangle\{e,f\}$ is acyclic $\gamma$-nonzero.
\begin{case}
$e\in Y$.
\end{case}
Let $\tilde{H}=(V(G), X\cup\{e\})$.
  If $\tilde{H}$ contains a cycle $D$, then, since $X$ and $Y$ are acyclic, $D$ is a unique cycle of $\tilde{H}$ and there is an edge $f\in E(D)-Y$. Then $X\triangle\{e,f\}$ is acyclic $\gamma$-nonzero.
Therefore, we can assume that $e$ joins two distinct components $C'$, $C''$ of $H_{1}$.

Since $\sum_{u\in V(C')} \gamma(u) \neq 0$, there is an edge $f$ of $C'$ such that one of the components of $C'\setminus f$, say $U$, has exactly one vertex $v$ with $\gamma(v)\neq 0$. If $U'$ is the component of $C'\setminus f$ other than $U$, then $\sum_{u\in V(U')}\gamma(u)=-\sum_{u\in V(U)}\gamma(u)\neq 0$.
So $X\triangle\{e,f\}$ is acyclic $\gamma$-nonzero.
\end{proof}

\section{Minors of group-labelled graphs}\label{sec: minors of group-labelled graphs}
\label{sec:minor}
Let $\Gamma$ be an abelian group.
Now we define minors of $\Gamma$-labelled graphs as follows. Let $(G, \gamma)$ be a $\Gamma$-labelled graph and $e=uv$ be an edge of $G$.
Then $(G, \gamma)\setminus e = (G\setminus e, \gamma)$ is the $\Gamma$-labelled graph obtained by \emph{deleting} the edge $e$ from $(G,\gamma)$.
For an isolated vertex $v$ of $G$, $(G, \gamma)\setminus v = (G\setminus v, \gamma|_{V(G)-\{v\}})$ is the $\Gamma$-labelled graph obtained by \emph{deleting} the vertex $v$ from $(G,\gamma)$.
If $e$ is not a loop, then let $(G, \gamma)/e=(G/e, \gamma')$ such that, for each $x\in V(G/e)$,
\[
  \gamma'(x)=
  \begin{cases}
    \gamma(u)+\gamma(v) & \text{if $x$ is the vertex of $G/e$ corresponding to $e$,} \\
    \gamma(x)  & \text{otherwise.}
  \end{cases}
\] If $e$ is a loop, then let $(G, \gamma) / e = (G, \gamma) \setminus e$.
\emph{Contracting} the edge $e$ is an operation obtaining $(G,\gamma)/e$ from $(G,\gamma)$. 
For an edge set $X = \{e_1,\dots,e_t\}$, let $(G,\gamma)/X = (G,\gamma)/e_1/\dots/e_t$ and $(G,\gamma)\setminus X=(G\setminus X,\gamma)$.
A $\Gamma$-labelled graph $(G', \gamma')$ is a \emph{minor} of $(G, \gamma)$ if $(G', \gamma')$ is obtained from $(G, \gamma)$ by deleting some edges, contracting some edges, and deleting some isolated vertices. Let $\kappa(G, \gamma)$ be the number of components $C$ of $G$ such that $\gamma(x) = 0$ for all $x \in V(C)$. An edge $e$ of $G$ is a \emph{$\gamma$-bridge} if $\kappa((G, \gamma) \setminus e) > \kappa(G, \gamma)$.
A non-loop edge $e=uv$ of $G$ is a \emph{$\gamma$-tunnel} if, for the component $C$ of $G$ containing $e$, the following hold:
\begin{enumerate}
  \item[(i)] For each $x\in V(C)$, $\gamma(x)\neq 0$ if and only if $x\in\{u,v\}$.
  \item[(ii)] $\gamma(u)+\gamma(v)=0$.
\end{enumerate}
From the definition of a $\gamma$-tunnel, it is easy to see that an edge $e$ is a $\gamma$-tunnel in $G$ if and only if $\kappa((G,\gamma)/e) > \kappa(G,\gamma)$.

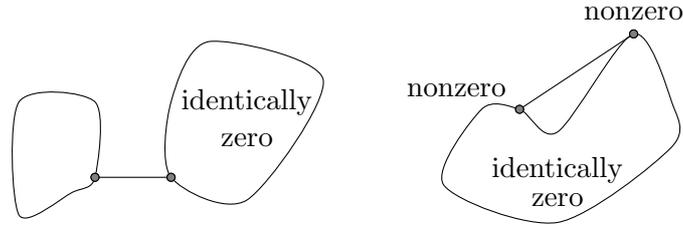
\begin{figure}
  \begin{center}    
    \begin{tikzpicture}
      \tikzstyle{v}=[circle,draw,fill=black!50,inner sep=0pt,minimum width=3pt]
      \draw (0,0) node  [v](x){} -- (1,0) node[v] (y){};
      \draw plot [smooth cycle] coordinates { (x) (0,1) (-1,1) (-1,-.5) (-.3,-.2)};
      \draw plot [smooth cycle] coordinates { (y) (1,1) (1.5,1.8) (3,1.3) (2,-.3)};
      \draw (2,1) node {identically};
      \draw (2,0.5) node {zero};
      \draw (x) node [v]{};
      \draw (y) node [v]{};
    \end{tikzpicture}
    \qquad
    \begin{tikzpicture}
      \tikzstyle{v}=[circle,draw,fill=black!50,inner sep=0pt,minimum width=3pt]
      \draw (0,0) node  [v,label=above left:nonzero](x){} -- (1.5,1) node[v,label=nonzero] (y){};
      \draw plot [smooth cycle] coordinates { (x) (-.5,0) (-1,-1) (.5,-1.5) (2,-.5) (2,.1) (y) (0.5,-.3)};
      \draw (0.5,-.8) node {identically};
      \draw (.5,-1.2) node {zero};
      \draw (x) node [v]{};
      \draw (y) node [v]{};
    \end{tikzpicture}
  \end{center}
  \caption{A $\gamma$-bridge and a $\gamma$-tunnel.}
\end{figure}

The following lemmas are analogous to properties of graphic delta-matroids in Oum~{\cite[Propositions~8, 9, 10, and 11]{Oum2009circle}}.

\begin{lemma}
\label{lem:gbridge}
Let $(G,\gamma)$ be a $\Gamma$-labelled graph and $e$ be an edge of $G$.
The following are equivalent.
\begin{enumerate}[label=\rm{(\roman*)}]
  \item\label{item:b2} Every acyclic $\gamma$-nonzero set in $G$ contains $e$.
  \item\label{item:b1} The edge $e$ is a $\gamma$-bridge in $G$.
  \item\label{item:b3} Every $\gamma$-nonzero set in $G$ contains $e$.
\end{enumerate}
\end{lemma}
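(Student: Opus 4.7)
The plan is to prove the cyclic implications \ref{item:b3} $\Rightarrow$ \ref{item:b2} $\Rightarrow$ \ref{item:b1} $\Rightarrow$ \ref{item:b3}. The first is immediate, since every acyclic $\gamma$-nonzero set is in particular $\gamma$-nonzero. A short preliminary observation that I will use throughout is the following consequence of the definition of $\gamma$-bridge, obtained by comparing $\kappa(G, \gamma)$ with $\kappa((G, \gamma) \setminus e)$: an edge $e = uv$ is a $\gamma$-bridge of $(G, \gamma)$ if and only if $e$ is a bridge of $G$ and at least one of the two components of $C \setminus e$, where $C$ is the component of $G$ containing $e$, has $\gamma$ identically zero. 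In particular, loops are never $\gamma$-bridges, so $e$ may be assumed non-loop wherever relevant.

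For \ref{item:b1} $\Rightarrow$ \ref{item:b3}, I fix a $\gamma$-bridge $e=uv$ and an arbitrary $\gamma$-nonzero set $F$, aiming to show $e \in F$ by contradiction. Write the two components of $C \setminus e$ as $C_1 \ni u$ and $C_2 \ni v$, and by the preliminary observation assume $\gamma|_{V(C_1)} \equiv 0$. If $e \notin F$, then the component $D$ of $(V(G), F)$ containing $u$ satisfies $V(D) \subseteq V(C_1)$, since every $u$-walk using edges in $F \subseteq E(G)-\{e\}$ stays in the component of $G \setminus e$ containing $u$. Hence $\gamma|_{V(D)} \equiv 0$, so axiom \ref{item:def2} forces $G[V(D)]$ to be a component of $G$; but $u \in V(D) \subseteq V(C_1) \subsetneq V(C)$ while $V(C)$ itself is the vertex set of the component of $G$ containing $u$, a contradiction.

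For \ref{item:b2} $\Rightarrow$ \ref{item:b1}, I argue the contrapositive: assuming $e$ is not a $\gamma$-bridge, I construct an acyclic $\gamma$-nonzero set of $G$ avoiding $e$. The natural candidate is to apply Theorem~\ref{thm:delta} to $(G \setminus e, \gamma)$, producing an acyclic $\gamma$-nonzero set $F$ in $G \setminus e$. Acyclicity of $F$ in $G$ is clear, and axiom \ref{item:def1} depends only on $V(D)$ and $\gamma$ for each component $D$ of $(V(G), F)$, so it transfers directly from $G \setminus e$ to $G$. The real point is axiom \ref{item:def2}: whenever $\gamma|_{V(D)} \equiv 0$ and $(G\setminus e)[V(D)]$ is a component of $G \setminus e$, I need $G[V(D)]$ to be a component of $G$. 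If $e$ is not a bridge of $G$, then $G$ and $G \setminus e$ share the same components and this is automatic; if $e$ is a bridge, the only possible failure is $V(D) \in \{V(C_1), V(C_2)\}$, and the preliminary observation rules this out because $e$ is not a $\gamma$-bridge, forcing $\gamma|_{V(C_1)} \not\equiv 0$ and $\gamma|_{V(C_2)} \not\equiv 0$.

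The main obstacle is the \ref{item:def2} bookkeeping in the \ref{item:b2} $\Rightarrow$ \ref{item:b1} direction: the definition of $\gamma$-nonzero is sensitive to which ambient graph one uses, so passing from $G\setminus e$ to $G$ requires exactly the bridge/non-bridge case analysis above. Once the characterization of $\gamma$-bridges in the preliminary observation is in hand, both nontrivial implications reduce to tracing the definitions through the splitting of the component of $G$ containing $e$.
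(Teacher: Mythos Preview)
Your proof is correct and follows the same cyclic chain \ref{item:b3} $\Rightarrow$ \ref{item:b2} $\Rightarrow$ \ref{item:b1} $\Rightarrow$ \ref{item:b3} as the paper, with the same use of Theorem~\ref{thm:delta} applied to $(G\setminus e,\gamma)$ for the contrapositive of \ref{item:b2} $\Rightarrow$ \ref{item:b1}. The only difference is presentational: the paper first reduces to $G$ connected and $\gamma\not\equiv 0$ (replacing $\gamma$ by a suitable $\gamma'$ when $\gamma\equiv 0$), which collapses the definition of $\gamma$-nonzero to the single condition $\sum_{u\in V(C)}\gamma(u)\neq 0$ and eliminates all \ref{item:def2} bookkeeping; you instead keep the full definition and handle \ref{item:def2} explicitly via your preliminary characterization of $\gamma$-bridges. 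Both routes are short, and your direct treatment has the mild advantage of making the role of \ref{item:def2} transparent rather than hiding it in a reduction.
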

\begin{proof}
We may assume that $G$ is connected.
If $\gamma \equiv 0$, then we choose a vertex $v$ of $G$ and take a map $\gamma' : V(G) \rightarrow \Gamma$ such that $\gamma'(v) \neq 0$ and $\gamma'(u) = 0$ for all $u \neq v$.
Then an edge set of $G$ is $\gamma$-nonzero if and only if it is $\gamma'$-nonzero, and $e$ is a $\gamma$-bridge if and only if it is a $\gamma'$-bridge.
So we can assume that $\gamma \not\equiv 0$.
Therefore, an edge set $F$ of $G$ is $\gamma$-nonzero in $G$ if and only if $\sum_{u\in V(C)} \gamma(u) \neq 0$ for each component $C$ of a subgraph $(V(G),F)$.
It is obvious that \ref{item:b3} implies \ref{item:b2}.

We first prove that \ref{item:b2} implies \ref{item:b1}. Suppose that $e$ is not a $\gamma$-bridge.
By~\ref{item:d1} of Theorem~\ref{thm:delta}, $G\setminus e$ has an acyclic $\gamma$-nonzero set $F$.
If $G \setminus e$ is connected, then $F$ is acyclic $\gamma$-nonzero in~$G$.
So we may assume that $G \setminus e$ has exactly two components $C_1$ and $C_2$.
Since $e$ is not a $\gamma$-bridge in $G$, we have $\gamma|_{V(C_1)}$, $\gamma|_{V(C_2)} \not\equiv 0$.
Hence, $\sum_{u\in V(D)} \gamma(u) \neq 0$ for every component $D$ of $(V(G),F)$ by~\ref{item:def1}.
Therefore, $F$ is acyclic $\gamma$-nonzero in $G$ not containing $e$. 

Now let us prove that \ref{item:b1} implies \ref{item:b3}. 
Let $e=uv$ be a $\gamma$-bridge. Then $G\setminus e$ contains a component $C$ such that $\gamma|_{V(C)}\equiv 0$. We may assume that $u\in V(C)$ and $v\notin V(C)$.
Suppose that $G$ has a $\gamma$-nonzero set $F$ which does not contain $e$.
Let $D$ be a component of $(V(G), F)$ containing $u$.
Then $V(D)\subseteq V(C)$ and so $\sum_{u\in V(D)}\gamma(u) = 0$, contradicting that $F$ is $\gamma$-nonzero in $G$.
Therefore, every $\gamma$-nonzero set in $G$ contains the edge $e$.
\end{proof}

\begin{lemma}
\label{lem:deletion}
Let $(G,\gamma)$ be a $\Gamma$-labelled graph. Then, for an edge $e$ of $G$, 
\[
\mathcal{G}((G, \gamma)\setminus e)=
\begin{cases}
\mathcal{G}(G, \gamma)\setminus e & \text{if $e$ is not a $\gamma$-bridge,} \\
\mathcal{G}(G, \gamma)\triangle\{e\}\setminus e & \text{otherwise.}
\end{cases}
\]
\end{lemma}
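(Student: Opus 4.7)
The plan is to verify the desired equality of delta-matroids by directly comparing their feasible sets, splitting on whether $e$ is a $\gamma$-bridge. Lemma~\ref{lem:gbridge} is the key tool: when $e$ is a $\gamma$-bridge it forces every feasible set $F$ of $\mathcal{G}(G,\gamma)$ to contain $e$, so the feasible sets of $\mathcal{G}(G,\gamma)\triangle\{e\}\setminus e$ are exactly $\{F-\{e\}:F\text{ feasible in }\mathcal{G}(G,\gamma)\}$; when $e$ is not a $\gamma$-bridge, some acyclic $\gamma$-nonzero set of $G$ avoids $e$, so $\mathcal{G}(G,\gamma)\setminus e$ is well-defined and its feasible sets are the acyclic $\gamma$-nonzero sets of $G$ avoiding $e$. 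In either case acyclicity transfers trivially between $G$ and $G\setminus e$, and condition (G1) is identical in both ambient graphs since the subgraph $(V(G),F)$ is the same. The real work is transferring condition (G2).

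For the case where $e$ is not a $\gamma$-bridge, I would show that, for a component $D$ of $(V(G),F)$ with $\gamma|_{V(D)}\equiv 0$, $G[V(D)]$ is a component of $G$ if and only if $(G\setminus e)[V(D)]$ is a component of $G\setminus e$. Both implications use the following dichotomy: if $e$ has exactly one endpoint in $V(D)$, or if $e$ is a bridge inside $G[V(D)]$ splitting it into two identically zero pieces, then $\kappa$ strictly increases from $G$ to $G\setminus e$, contradicting the assumption; in all remaining configurations the ``component'' status of $V(D)$ is preserved.

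For the case where $e$ is a $\gamma$-bridge, let $C^*$ be the component of $G$ containing $e=uv$, and let $C_1^*,C_2^*$ be the two pieces of $C^*\setminus e$ containing $u,v$ respectively; at least one, say $C_1^*$, satisfies $\gamma|_{V(C_1^*)}\equiv 0$. For the forward direction, given a feasible $F$ in $\mathcal{G}(G,\gamma)$ (so $e\in F$), removing $e$ splits the component $C'$ of $(V(G),F)$ containing $e$ into subtrees $C_1'\ni u$ and $C_2'\ni v$. The technical heart is the structural claim: whenever $\gamma|_{V(C_i^*)}\equiv 0$, we have $V(C_i')=V(C_i^*)$. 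This is because any component of $(V(G),F)$ lying entirely inside $V(C_i^*)$ would be identically zero yet its vertex set would be a proper subset of the $G$-component $V(C^*)$, violating (G2) for $F$ in $G$; hence every vertex of $V(C_i^*)$ is in $V(C')$. With this claim, (G2) for $C_1'$ in $G\setminus e$ is immediate, and a short case analysis handles $C_2'$: either $\gamma|_{V(C_2^*)}\equiv 0$ (in which case (G2) again holds by the same claim) or $\sum_{V(C_2')}\gamma=\sum_{V(C')}\gamma\ne 0$ gives (G1). The reverse direction is dual: given a feasible $F'$ of $\mathcal{G}((G,\gamma)\setminus e)$, adding $e$ merges two $(V(G),F')$-components $D_1\subseteq V(C_1^*)$ and $D_2\subseteq V(C_2^*)$, and the analogous structural argument (using (G2) for $F'$ in $G\setminus e$ to force $V(D_i)=V(C_i^*)$ when $\gamma|_{V(C_i^*)}\equiv 0$) establishes (G1) or (G2) for the merged component in $G$.

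The main obstacle is the bookkeeping between three systems of components: components of $G$ and of $G\setminus e$, components of the subgraphs $(V(G),F)$ and $(V(G),F-\{e\})$, and the pieces $C_1^*,C_2^*$ of $C^*\setminus e$. The structural claim that an identically zero $C_i^*$ absorbs its entire vertex set into the $F$-component $V(C')$ containing $e$ is the crucial step, and it is precisely where the $\gamma$-bridge hypothesis is genuinely used to transfer (G2) between $G$ and $G\setminus e$ in Case~2.
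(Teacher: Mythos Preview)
Your approach is correct, and the structural claim you isolate in Case~2 (that an identically-zero side $C_i^*$ must be entirely spanned by the $F$-component through $e$) is exactly the crux of the paper's argument as well. The difference is one of economy. The paper begins with a reduction you omit: assume $G$ is connected, and if $\gamma\equiv 0$ replace $\gamma$ by a map $\gamma'$ nonzero at exactly one vertex (which leaves both the feasible sets and the $\gamma$-bridge status of $e$ unchanged). After this reduction the condition ``$\gamma$-nonzero in $G$'' collapses to ``$\sum_{u\in V(C)}\gamma(u)\neq 0$ for every component $C$'', so condition~(G2) disappears entirely. Case~1 then becomes a one-line observation (the sum condition is identical in $G$ and $G\setminus e$), and in Case~2 the reduction forces $\gamma|_{V(C_2^*)}\not\equiv 0$, eliminating your sub-case where both sides are identically zero. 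Your direct argument pays for skipping this reduction with the (G2) bookkeeping you describe---tracking which $V(D)$'s remain components when passing between $G$ and $G\setminus e$, and handling the possibility that $\gamma|_{V(C^*)}\equiv 0$---but it is complete and arguably more transparent about where the $\gamma$-bridge hypothesis enters.
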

\begin{proof}
We may assume that $G$ is connected.
If $\gamma \equiv 0$, then we choose a vertex $v$ of $G$ and take a map $\gamma' : V(G) \rightarrow \Gamma$ such that $\gamma'(v) \neq 0$ and $\gamma'(u) = 0$ for all $u \neq v$.
Then an edge set of $G$ is $\gamma$-nonzero if and only if it is $\gamma'$-nonzero, and $e$ is a $\gamma$-bridge if and only if it is a $\gamma'$-bridge.
So we can assume that $\gamma \not\equiv 0$.
Therefore, an edge set $F$ of $G$ is $\gamma$-nonzero in $G$ if and only if $\sum_{u\in V(C)} \gamma(u) \neq 0$ for each component $C$ of a subgraph $(V(G),F)$.

We first consider the case that $e$ is not a $\gamma$-bridge.
Then $\gamma|_{V(C)} \not\equiv 0$ for each component $C$ of $G \setminus e$.
So an edge set $F$ of $G\setminus e$ is $\gamma$-nonzero in $G\setminus e$ if and only if $\sum_{u\in V(D)} \gamma(u) \neq 0$ for each component $D$ of a subgraph $(V(G),F)$.
Therefore, $\mathcal{G}((G, \gamma)\setminus e) = \mathcal{G}(G, \gamma)\setminus e$.

So it is enough to consider the case that $e$ is a $\gamma$-bridge.
Since $\gamma \not\equiv 0$ and $e$ is a $\gamma$-bridge, $G \setminus e$ consists of two components $C_1$ and $C_2$ such that $\gamma|_{V(C_1)} \equiv 0$ and $\gamma|_{V(C_2)} \not\equiv 0$.
Let $v_1$ and $v_2$ be the ends of $e$ such that $v_i \in V(C_i)$ for $i \in \{1,2\}$.

Let $F$ be a feasible set of $\mathcal{G}(G, \gamma)$, which means that $F$ is acyclic $\gamma$-nonzero in $G$.
By Lemma~\ref{lem:gbridge}, $F$ contains $e$.
Let $D$ be the component of $(V(G),F)$ containing $e$ and $D_i$ be the component of $D\setminus e$ containing $v_i$ for $i\in \{1,2\}$.
Since $V(D_1) \subseteq V(C_1)$ and $\gamma|_{V(C_1)} \equiv 0$, we have $V(D_1) = V(C_1)$ and $\sum_{u \in V(D_2)} \gamma(u) = \sum_{u \in V(D)} \gamma(u) - \sum_{u \in V(D_1)} \gamma(u) \neq 0$.
Therefore, $F - \{e\}$ is an acyclic $\gamma$-nonzero set in $G \setminus e$, which implies that $F-\{e\} = F \triangle \{e\}$ is a feasible set of $\mathcal{G}((G, \gamma)\setminus e)$.

Conversely, let $F$ be a feasible set of $\mathcal{G}((G, \gamma)\setminus e)$.
Then $F$ is an acyclic $\gamma$-nonzero set in $G\setminus e$.
We claim that $F\cup\{e\}$ is an acyclic $\gamma$-nonzero set in $G$.
Let $D$ be the component of $(V(G),F\cup\{e\})$ containing $e$ and $D_i$ be the component of $D \setminus e$ containing $v_i$ for $i\in\{1,2\}$.
Then $V(D_i) \subseteq V(C_i)$.
So $\gamma|_{V(D_1)} \equiv 0$ and, since $F$ is acyclic $\gamma$-nonzero in $G\setminus e$, we have $\sum_{u \in V(D_2)} \gamma(u) \neq 0$.
Hence $\sum_{u \in V(D)} \gamma(u) = \sum_{u \in V(D_1)} \gamma(u) + \sum_{u \in V(D_2)} \gamma(u) \neq 0$.
Therefore, $F\cup\{e\} = F \triangle \{e\}$ is an acyclic $\gamma$-nonzero set in $G$, which implies that $F$ is a feasible set of $\mathcal{G}(G, \gamma)\triangle\{e\}\setminus e$. 
\end{proof}

\begin{lemma}
  \label{lem:gtunnel}
  Let $(G,\gamma)$ be a $\Gamma$-labelled graph and $e$ be a non-loop edge of $G$.
  Then the following are equivalent.
  \begin{enumerate}[label=\rm(\roman*)]
    \item\label{item:t1} No acyclic $\gamma$-nonzero set in $G$ contains $e$.
    \item\label{item:t2} The edge $e$ is a $\gamma$-tunnel in $G$.
    \item\label{item:t3} No $\gamma$-nonzero set in $G$ contains $e$.
  \end{enumerate}
\end{lemma}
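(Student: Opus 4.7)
My plan is to mimic the structure of the proof of Lemma~\ref{lem:gbridge}: first reduce to the case where $G$ is connected and $\gamma \not\equiv 0$, then establish the cycle (iii) $\Rightarrow$ (i) $\Rightarrow$ (ii) $\Rightarrow$ (iii). For the reduction I would work on the component of $G$ containing $e$, since the other components do not affect any of the three conditions. If $\gamma \equiv 0$ on that component, I would replace $\gamma$ by a labelling $\gamma'$ that is nonzero on a single vertex, exactly as in the proofs of Lemmas~\ref{lem:gbridge} and~\ref{lem:deletion}. Under this substitution the acyclic $\gamma$-nonzero sets, and the $\gamma$-nonzero sets, coincide with their $\gamma'$-counterparts (both are the spanning connected subgraphs in the all-zero case), and $e$ is neither a $\gamma$-tunnel nor a $\gamma'$-tunnel, because being a $\gamma'$-tunnel requires both endpoints of $e$ to carry nonzero labels while only one vertex does. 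After this reduction the convenient criterion used in the proof of Theorem~\ref{thm:delta} applies: a subgraph of $G$ is $\gamma$-nonzero if and only if every one of its components has nonzero $\gamma$-sum.

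The implication (iii) $\Rightarrow$ (i) is immediate. For (ii) $\Rightarrow$ (iii), suppose $e = uv$ is a $\gamma$-tunnel, so that $\gamma(x) = 0$ for $x \notin \{u,v\}$ and $\gamma(u) + \gamma(v) = 0$, with $\gamma(u) \neq 0$. If some $\gamma$-nonzero set $F$ contained $e$, then the component $D$ of $(V(G), F)$ containing $e$ would include both $u$ and $v$, giving $\sum_{x \in V(D)} \gamma(x) = \gamma(u) + \gamma(v) = 0$, which contradicts the simplified $\gamma$-nonzero criterion because $\gamma(u) \neq 0$ on $V(D)$.

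For (i) $\Rightarrow$ (ii) I would argue the contrapositive: assuming $e$ is not a $\gamma$-tunnel, I construct an acyclic $\gamma$-nonzero set containing $e$. Consider the contracted labelled graph $(G/e, \gamma')$ with $\gamma'(x_e) = \gamma(u) + \gamma(v)$ at the contracted vertex $x_e$. Under our standing assumptions, the hypothesis that $e$ is not a tunnel is precisely that $\gamma'$ is not identically zero on $G/e$, which also matches the characterization $\kappa((G,\gamma)/e) = \kappa(G,\gamma)$ stated in the excerpt. By Theorem~\ref{thm:delta}(1) applied to $(G/e, \gamma')$, there exists an acyclic $\gamma'$-nonzero set $F'$ in $G/e$, and I would then verify that $F' \cup \{e\}$ is acyclic $\gamma$-nonzero in $G$: any cycle of $F' \cup \{e\}$ in $G$ would contract to a cycle of $F'$ in $G/e$, ruling out cycles; and the components of $(V(G), F' \cup \{e\})$ biject with those of $(V(G/e), F')$, with the component containing $e$ corresponding to the component containing $x_e$, and corresponding components have equal $\gamma$- and $\gamma'$-sums via $\gamma(u) + \gamma(v) = \gamma'(x_e)$. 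Hence $e$ lies in an acyclic $\gamma$-nonzero set, proving the contrapositive. I expect the main bookkeeping obstacle to be confirming invariance of all three conditions under the $\gamma \equiv 0 \to \gamma'$ substitution, but this step parallels the corresponding reduction already carried out in Lemma~\ref{lem:gbridge}.
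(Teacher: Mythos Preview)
Your proposal is correct. The implications \ref{item:t3}$\Rightarrow$\ref{item:t1} and \ref{item:t2}$\Rightarrow$\ref{item:t3}, as well as the reduction to connected $G$ with $\gamma\not\equiv 0$, match the paper's treatment essentially verbatim.

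The one genuine difference is in \ref{item:t1}$\Rightarrow$\ref{item:t2}. The paper argues directly: it picks a spanning tree $H$ of $G$ containing $e$, observes that one may assume $\sum_{u}\gamma(u)=0$ (else $E(H)$ already works), and then, if some vertex outside the endpoints of $e$ carries a nonzero label, locates an edge $f\neq e$ of $H$ whose removal splits $H$ into two pieces with nonzero sums, producing an acyclic $\gamma$-nonzero set $E(H)-\{f\}$ containing $e$. Your route instead contracts $e$, notes that ``$e$ is not a $\gamma$-tunnel'' is (under $\gamma\not\equiv 0$) exactly the statement that the contracted labelling $\gamma'$ is not identically zero, invokes Theorem~\ref{thm:delta}\ref{item:d1} on $(G/e,\gamma')$ to obtain an acyclic $\gamma'$-nonzero set $F'$, and lifts it to $F'\cup\{e\}$ in $G$. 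This is a legitimate alternative: Theorem~\ref{thm:delta} is proved earlier and no circularity arises, and the component-bijection and equality of sums you describe are exactly what makes the lift work (this is also the mechanism behind Lemma~\ref{lem: tunnel set}\ref{item:tunnel_cont1}, proved later). The trade-off is that your argument is shorter and reuses existing machinery, while the paper's argument is self-contained and exhibits the needed feasible set explicitly from a spanning tree.
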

\begin{proof}
  It is obvious that \ref{item:t3} implies \ref{item:t1}.
  We first show that \ref{item:t1} implies \ref{item:t2}. We may assume that $G$ is connected.
  Let $H$ be a spanning tree containing $e$.
  Then we may assume that $\gamma\not\equiv 0$ and $\sum_{u\in V(H)}\gamma(u)=0$ because otherwise $E(H)$ is acyclic $\gamma$-nonzero in $G$.
  Let $S=\{v\in V(G) :\gamma(v)\neq 0\}$.
  Then $|S| \geq 2$. 
  If $S$ contains a vertex not in $\{x,y\}$, then let $x'$ be a vertex in $S-\{x,y\}$ maximizing $d_{H}(x,x')$. Let $f$ be an edge incident with $x'$ on the path from $x$ to $x'$. Then $H\setminus f$ has components $D_{1}$, $D_{2}$ such that $x'\in V(D_{1})$ and $x\in V(D_{2})$. Then $\sum_{u\in V(D_{1})}\gamma(u)=\gamma(x')\neq 0$ by the choice of $x'$ and $\sum_{u\in V(D_{2})}\gamma(u)=\sum_{u\in V(H)}\gamma(u)-\sum_{u\in V(D_{1})}\gamma(u)=-\gamma(x')\neq 0$. Hence $E(H) - \{f\}$ is an acyclic $\gamma$-nonzero set in $G$ containing $e$.
Therefore, $S=\{x,y\}$ and $e$ is a $\gamma$-tunnel in $G$.

  Now let us prove that \ref{item:t2} implies \ref{item:t3}.
  Let $e=xy$ be a $\gamma$-tunnel of $G$ and $C$ be a component of~$G$ containing $e$.
  Suppose that $G$ has a $\gamma$-nonzero set $F$ containing $e$ and let $D$ be the component of $(V(G), F)$ containing $e$.
  Since $V(D)\subseteq V(C)$ and $e$ is a $\gamma$-tunnel, we have that $\gamma|_{V(D)}\not\equiv 0$ and $\sum_{u\in V(D)}\gamma(u)=\gamma(x)+\gamma(y)=0$, contradicting~\ref{item:def1}.
  Hence $G$ has no $\gamma$-nonzero set containing $e$.
  \end{proof}

\begin{lemma}
\label{lem:contraction}
Let $(G, \gamma)$ be a $\Gamma$-labelled graph. Then, for an edge $e$ of $G$, 
\[
  \mathcal{G}((G, \gamma)/ e)=
  \begin{cases}
    \mathcal{G}(G, \gamma)\triangle\{e\}\setminus e & \text{if $e$ is neither a loop nor a $\gamma$-tunnel,} \\
    \mathcal{G}(G, \gamma)\setminus e & \text{otherwise.}
  \end{cases}
\]
\end{lemma}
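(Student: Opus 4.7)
I would split the proof by cases on the edge $e$: (i)~$e$ is a loop, (ii)~$e$ is a $\gamma$-tunnel, and (iii)~$e$ is neither. Case~(i) is immediate: $(G,\gamma)/e = (G,\gamma)\setminus e$ by definition of contraction for loops, and a loop is never a $\gamma$-bridge since deletion preserves components, so Lemma~\ref{lem:deletion} yields $\mathcal{G}((G,\gamma)/e) = \mathcal{G}((G,\gamma)\setminus e) = \mathcal{G}(G,\gamma)\setminus e$ as required.

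For case~(ii), Lemma~\ref{lem:gtunnel} says that no acyclic $\gamma$-nonzero set in $G$ contains $e$, so the feasible sets of $\mathcal{G}(G,\gamma)\setminus e$ already coincide with those of $\mathcal{G}(G,\gamma)$, and it remains to prove that $F \subseteq E(G)-\{e\}$ is acyclic $\gamma'$-nonzero in $G/e$ if and only if $F$ is acyclic $\gamma$-nonzero in $G$. I would work on the component $C$ of $G$ containing $e$: the tunnel conditions force $\gamma' \equiv 0$ on $V(C/e)$, so by~\ref{item:def2} feasibility in $C/e$ is equivalent to $F \cap E(C)$ being a spanning tree of $C/e$, i.e.~$F \cap E(C) \cup \{e\}$ being a spanning tree of $C$. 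On the other hand, since $\gamma$ vanishes on $V(C) - \{u,v\}$ and $\gamma(u) + \gamma(v) = 0$, an acyclic $F \subseteq E(C)-\{e\}$ is $\gamma$-nonzero precisely when it is a spanning forest of $C$ with two components separating $u$ from $v$: any $uv$-path in $F$ would give a component with $\gamma$-sum zero but not identically zero, and any component disjoint from $\{u,v\}$ would be $\gamma$-zero but not a full component of $C$. These two conditions match exactly.

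For case~(iii), it suffices to show that $F \subseteq E(G) - \{e\}$ is acyclic $\gamma'$-nonzero in $G/e$ iff $F \cup \{e\}$ is acyclic $\gamma$-nonzero in $G$, since the latter describes feasibility in $\mathcal{G}(G,\gamma)\triangle\{e\}\setminus e$. I would reduce to connected $G$ component-wise and, following the device in Lemma~\ref{lem:deletion}, if $\gamma \equiv 0$ replace $\gamma$ by a map nonzero at a single vertex; this preserves both the $\gamma$-nonzero family and the non-tunnel status of $e$, since a tunnel requires $\gamma$ to be nonzero at \emph{both} endpoints of $e$. With $\gamma \not\equiv 0$ on connected $G$, the non-tunnel hypothesis forces $\gamma' \not\equiv 0$ on the connected $G/e$, because $\gamma' \equiv 0$ would mean $\gamma$ vanishes off $\{u,v\}$ with $\gamma(u)+\gamma(v)=0$, i.e.~exactly the tunnel configuration. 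Since $\gamma$ and $\gamma'$ are nonzero on these connected graphs, condition~\ref{item:def2} never triggers and feasibility on each side reduces to requiring $\sum_{V(K)}\gamma \neq 0$ on every component. The natural bijection between components of $(V(G), F\cup\{e\})$ and of $(V(G/e), F)$, valid because $e$ is not a loop, preserves both acyclicity and these component sums, completing the equivalence. The most delicate step is Case~(ii), where $\gamma'$ really does vanish on the contracted component so feasibility must be read off~\ref{item:def2} rather than from the nonzero-sum clause; the explicit spanning-forest description given above is what bridges the two formulations.
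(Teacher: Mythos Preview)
Your proposal is correct and follows essentially the same approach as the paper: the same three-case split, the loop case disposed of via Lemma~\ref{lem:deletion}, and the other two cases handled by matching up components of $(V(G),F\cup\{e\})$ in $G$ with components of $(V(G/e),F)$ in $G/e$. Your organization differs only cosmetically---in Case~(iii) you first reduce to $\gamma\not\equiv 0$ (so that \ref{item:def2} never applies) whereas the paper carries the identically-zero case through explicitly, and in Case~(ii) you phrase the argument as an explicit ``spanning forest with two components separating $u$ from $v$'' characterization rather than the paper's direct component-by-component check---but the underlying ideas are the same.
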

\begin{proof}
Let $e^{*}$ be the vertex of $G/e$ corresponding to $e$, and let $\gamma^\ast: V(G/e) \rightarrow \Gamma$ be a map such that $(G/e, \gamma^\ast) = (G, \gamma)/e$.
We first consider the case that $e$ is neither a $\gamma$-tunnel nor a loop.
Let $F$ be a feasible set of $\mathcal{G}((G, \gamma)/e)$, which implies that $F$ is acyclic $\gamma^{*}$-nonzero in $G/e$.
We aim to prove that $F\cup\{e\}$ is acyclic $\gamma$-nonzero in $G$.

Let $C$ be the component of $(V(G),F\cup\{e\})$ containing $e$ and $C^*=C/e$.
Then $C^\ast$ is a component of $(V(G/e), F)$.
If $\sum_{u\in V(C^{*})}\gamma^{*}(u)\neq 0$, then $\sum_{u\in V(C)}\gamma(u)=\sum_{u\in V(C^{*})}\gamma^{*}(u)\neq 0$ and therefore $F\cup\{e\}$ is acyclic $\gamma$-nonzero in~$G$ because $e$ is not a loop.
So we may assume that $\gamma^{*}|_{V(C^{*})}\equiv 0$ and $(G/e)[V(C^{*})]$ is a component of $G/e$.
Then $G[V(C)]$ is a component of $G$ and $\gamma|_{V(C)}\equiv 0$ because $e$ is not a $\gamma$-tunnel.
Since $e$ is not a loop, $F\cup \{e\}$ is acyclic $\gamma$-nonzero in $G$, which means that $F\cup\{e\}=F\triangle\{e\}$ is a feasible set of $\mathcal{G}(G, \gamma)$. Hence $F$ is a feasible set of $\mathcal{G}(G, \gamma)\triangle\{e\}\setminus e$.

Conversely, let $F$ be a feasible set of $\mathcal{G}(G, \gamma)\triangle\{e\}\setminus e$, meaning that $F\cup\{e\}$ is acyclic $\gamma$-nonzero in $G$.
Trivially, $F$ is acyclic in $G/e$.
We claim that $F$ is $\gamma^{*}$-nonzero in $G/e$.
Let $C^\ast$ be the component of $(V(G/e), F)$ containing $e^\ast$ and let $C$ be the component of $(V(G), F \cup \{e\})$ containing $e$.
Then $C^\ast = C / e$.
If $\sum_{u\in V(C)}\gamma(u)\neq 0$, then $\sum_{u\in V(C^{*})}\gamma^{*}(u)=\sum_{u\in V(C)}\gamma(u)\neq 0$ and therefore $F$ is $\gamma^{*}$-nonzero in $G/e$.
So we may assume that $\gamma|_{V(C)}\equiv 0$ and $G[V(C)]$ is a component of $G$.
Then we know that $\gamma^{*}|_{V(C^{*})}\equiv 0$ and $(G/e)[V(C^{*})]$ is a component of $G/e$.
Hence $F$ is $\gamma^{*}$-nonzero in $G/e$.
This proves that $\mathcal{G}((G, \gamma)/e)=\mathcal{G}(G, \gamma)\triangle\{e\}\setminus e$ if $e$ is neither a loop nor a $\gamma$-tunnel.

If $e$ is a loop, then by Lemma~\ref{lem:deletion}, $\mathcal{G}(G, \gamma)\setminus e=\mathcal{G}((G, \gamma)\setminus e)=\mathcal{G}((G, \gamma)/e)$. 

Now we may assume that $e=xy$ is a $\gamma$-tunnel in $G$.
First, let us show that every feasible set $F$ of $\mathcal{G}((G,\gamma)/e)$ is feasible in $\mathcal{G}(G,\gamma)\setminus e$.
Let $C^{*}$ be the component of $(V(G/e),F)$ containing $e^{*}$. Since $e$ is a $\gamma$-tunnel in $G$, we have $\gamma^{*}|_{V(C^{*})}\equiv 0$. Hence, $(G/e)[V(C^{*})]$ is a component of $G/e$ because $F$ is acyclic $\gamma^*$-nonzero in $G/e$. Let $C$ be the component of $(V(G),F\cup\{e\})$ containing $e$.
Then $C/e=C^{*}$ and $C\setminus e$ has two components $C_{1}$, $C_{2}$ such that $x\in V(C_{1})$ and $y\in V(C_{2})$. Observe that $\sum_{u\in V(C_{1})}\gamma(u)=\gamma(x)\neq 0$ and $\sum_{u\in V(C_{2})}\gamma(u)=\gamma(y)\neq 0$. Hence $F$ is acyclic $\gamma$-nonzero in $G$ and $F$ is a feasible set of $\mathcal{G}(G,\gamma)\setminus e$.

Conversely, we want to show that every feasible set $F$ of $\mathcal{G}(G, \gamma)\setminus e$ is feasible in $\mathcal{G}((G,\gamma)/e)$.
Observe that $F$ is acyclic $\gamma$-nonzero in $G$ not containing $e$. Let $C_{1}$ and $C_{2}$ be components of a subgraph $(V(G),F)$ containing $x$ and $y$, respectively. Since $e$ is a $\gamma$-tunnel, we know that $\sum_{u\in V(C_{1})}\gamma(u)=\gamma(x)\neq 0$, $\sum_{u\in V(C_{2})}\gamma(u)=\gamma(y)\neq 0$, and $G[V(C_{1} )\cup V(C_{2})]$ is a component of $G$. So let $C^{*}$ be a component of $(V(G/e),F)$ containing $e^{*}$. Since $\gamma^{*}(e^{*})=\gamma(x)+\gamma(y)=0$ and $G[V(C_{1} )\cup V(C_{2})]$ is a component of $G$, we have $\gamma^{*}|_{V(C^{*})}\equiv 0$ and $(G/e)[V(C^{*})]$ is a component of $G/e$. So $F$ is acyclic $\gamma^{*}$-nonzero in $G/e$ and therefore $F$ is a feasible set of $\mathcal{G}((G, \gamma)/e)$.
\end{proof}

We omit the proof of the following lemma.

\begin{lemma}\label{lem: delete vertex}
Let $(G, \gamma)$ be a $\Gamma$-labelled graph and $v$ be an isolated vertex of $G$. Then $\mathcal{G}((G, \gamma)\setminus v)=\mathcal{G}(G\setminus v, \gamma|_{V(G)-\{v\}})$.
\end{lemma}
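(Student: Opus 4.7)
The plan is to observe that this lemma is essentially a restatement of the definition of vertex deletion for $\Gamma$-labelled graphs. By the definition given earlier in Section~\ref{sec: minors of group-labelled graphs}, $(G, \gamma) \setminus v$ is literally the pair $(G\setminus v,\, \gamma|_{V(G) - \{v\}})$, so applying $\mathcal{G}$ to both sides yields the claimed identity. Hence essentially no work is required, and this is presumably why the authors omit the proof.

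The only piece of verification hiding behind this tautology is that the delta-matroid $\mathcal{G}(G, \gamma)$ itself is unaffected by the presence of the isolated vertex~$v$, so that the right-hand side and $\mathcal{G}(G,\gamma)$ coincide as set systems. The ground sets are equal, since $E(G) = E(G\setminus v)$. For any $F \subseteq E(G)$, the subgraph $(V(G), F)$ differs from $(V(G) - \{v\}, F)$ only in containing the singleton component $\{v\}$, so I would check that this singleton satisfies conditions~\ref{item:def1} and~\ref{item:def2}. If $\gamma(v) \neq 0$, then~\ref{item:def1} holds because $\sum_{u \in \{v\}} \gamma(u) = \gamma(v) \neq 0$. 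If $\gamma(v) = 0$, then $\gamma|_{\{v\}} \equiv 0$ and $G[\{v\}]$ is a component of $G$ (because $v$ is isolated in $G$), so both~\ref{item:def1} and~\ref{item:def2} hold. In either case the singleton contributes no obstruction, so $F$ is acyclic $\gamma$-nonzero in $G$ if and only if $F$ is acyclic $\gamma|_{V(G) - \{v\}}$-nonzero in $G \setminus v$.

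No real obstacle arises. Unlike Lemmas~\ref{lem:deletion} and~\ref{lem:contraction}, where a twist by $\{e\}$ may be required to account for the special roles of $\gamma$-bridges and $\gamma$-tunnels, deleting an isolated vertex changes no edges and hence requires no corresponding operation on the delta-matroid side; this is why the statement takes such a clean form.
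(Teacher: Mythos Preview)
Your analysis is correct: the paper explicitly omits the proof of this lemma, and you have correctly identified why --- by the definition given in Section~\ref{sec: minors of group-labelled graphs}, $(G,\gamma)\setminus v$ is literally $(G\setminus v,\gamma|_{V(G)-\{v\}})$, so the stated identity is a tautology. Your additional verification that the isolated singleton component $\{v\}$ never obstructs conditions~\ref{item:def1} and~\ref{item:def2} (hence $\mathcal{G}(G,\gamma)=\mathcal{G}((G,\gamma)\setminus v)$ as set systems) supplies the substantive content actually needed for Proposition~\ref{prop: minor graft and delta-matroid}, and is entirely in line with the paper's intent.
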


\begin{proposition}
  \label{prop: minor graft and delta-matroid}
  Let $(G, \gamma)$ be a $\Gamma$-labelled graph and $M = \mathcal{G}(G,\gamma) \triangle X$ for some $X \subseteq E(G)$.
  \begin{enumerate}[label=\rm(\roman*)]
    \item\label{item:mi1} If $(G', \gamma')$ is a minor of $(G, \gamma)$, then $\mathcal{G}(G', \gamma')$ is a minor of $M$.
    \item\label{item:mi2} If $M'$ is a minor of $M$, then there exists a minor $(G',\gamma')$ of $(G,\gamma)$ such that $M' = \mathcal{G}(G',\gamma') \triangle X'$ for some $X' \subseteq E(G')$.
  \end{enumerate}
\end{proposition}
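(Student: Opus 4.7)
The plan is to prove both parts by induction, using the translation between graph operations and delta-matroid operations given by Lemmas~\ref{lem:deletion}, \ref{lem:contraction}, and~\ref{lem: delete vertex}. A useful preliminary observation is that the minor relation is invariant under twisting the host: since $M \triangle A \setminus B = (M \triangle Z)\triangle (A \triangle Z) \setminus B$, a delta-matroid $N$ is a minor of $M$ if and only if it is a minor of $M \triangle Z$ for any $Z$. Hence for~\ref{item:mi1} it suffices to show $\mathcal{G}(G',\gamma')$ is a minor of $\mathcal{G}(G,\gamma)$; and for~\ref{item:mi2}, once we produce an expression of the desired shape with any residual twist, it can be absorbed into $X'$.

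For~\ref{item:mi1}, I induct on the total number of edge deletions, edge contractions, and isolated-vertex deletions needed to obtain $(G',\gamma')$ from $(G,\gamma)$. The base case is trivial. For the inductive step, consider the final operation. If it deletes an isolated vertex, Lemma~\ref{lem: delete vertex} leaves the delta-matroid unchanged. If it deletes or contracts an edge $e$, Lemma~\ref{lem:deletion} or Lemma~\ref{lem:contraction} identifies $\mathcal{G}$ of the resulting graph with one of $\mathcal{G}(G,\gamma) \setminus e$ or $\mathcal{G}(G,\gamma) \triangle\{e\} \setminus e$, each manifestly a minor of $\mathcal{G}(G,\gamma)$. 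Transitivity of the minor relation finishes~\ref{item:mi1}.

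For~\ref{item:mi2}, I induct on $|E(M)| - |E(M')|$. The base case $|E(M)| = |E(M')|$ is pure twisting: $M' = M \triangle Z = \mathcal{G}(G,\gamma) \triangle (X \triangle Z)$, so $(G',\gamma') = (G,\gamma)$ and $X' = X \triangle Z$ work. For the inductive step, Lemma~\ref{lem:delandcont} gives $e \in E(M) - E(M')$ with $M'$ a minor of $M_1$, where $M_1 = M \setminus e$ or $M_1 = M \triangle\{e\} \setminus e$. Writing $M_1 = \mathcal{G}(G,\gamma) \triangle Y \setminus e$ with $Y = X$ or $Y = X \triangle \{e\}$, I split on whether $e \in Y$. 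If $e \notin Y$, then $M_1 = (\mathcal{G}(G,\gamma) \setminus e) \triangle Y$; since $M_1$ is a valid delta-matroid, $e$ is not a coloop of $\mathcal{G}(G,\gamma)$, so by Lemma~\ref{lem:gbridge} $e$ is not a $\gamma$-bridge, and Lemma~\ref{lem:deletion} yields $M_1 = \mathcal{G}((G,\gamma)\setminus e) \triangle Y$. If $e \in Y$, then $M_1 = (\mathcal{G}(G,\gamma) \triangle\{e\} \setminus e) \triangle (Y - \{e\})$; validity of $M_1$ forces $e$ not to be a loop of $\mathcal{G}(G,\gamma)$, so by Lemma~\ref{lem:gtunnel} $e$ is neither a loop of $G$ nor a $\gamma$-tunnel, and Lemma~\ref{lem:contraction} yields $M_1 = \mathcal{G}((G,\gamma)/e) \triangle (Y - \{e\})$. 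In either case $M_1$ has the required form over a minor of $(G,\gamma)$, and the inductive hypothesis applied to $M'$ as a minor of $M_1$ (with strictly smaller ground-set gap) completes the proof.

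The main obstacle is this case split in~\ref{item:mi2}: one must translate the fact that $M_1$ exists as a delta-matroid (that is, $e$ is neither a loop nor a coloop of the twisted intermediate $\mathcal{G}(G,\gamma) \triangle Y$) into the exact graph-theoretic property of $e$ required to invoke the correct branch of Lemma~\ref{lem:deletion} or Lemma~\ref{lem:contraction}. Lemmas~\ref{lem:gbridge} and~\ref{lem:gtunnel} are what make this translation go through.
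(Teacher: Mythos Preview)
Your proof is correct and follows essentially the same route as the paper: the paper's proof simply reads ``We may assume that $X = \emptyset$. Lemmas~\ref{lem:deletion}, \ref{lem:contraction}, and~\ref{lem: delete vertex} imply~\ref{item:mi1} and Lemmas~\ref{lem:delandcont}, \ref{lem:deletion}, \ref{lem:contraction}, and~\ref{lem: delete vertex} imply~\ref{item:mi2}.'' You have supplied exactly the details this sentence suppresses, including the additional explicit appeal to Lemmas~\ref{lem:gbridge} and~\ref{lem:gtunnel} to pin down which branch of Lemmas~\ref{lem:deletion} and~\ref{lem:contraction} applies; this is a legitimate (and arguably necessary) unpacking rather than a different argument.
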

\begin{proof}
  We may assume that $X = \emptyset$.
  Lemmas~\ref{lem:deletion}, \ref{lem:contraction}, and \ref{lem: delete vertex} imply~\ref{item:mi1} and
  Lemmas~\ref{lem:delandcont}, \ref{lem:deletion}, \ref{lem:contraction}, and \ref{lem: delete vertex} imply~\ref{item:mi2}.
\end{proof}

\section{Maximum weight acyclic $\gamma$-nonzero set}\label{sec: applications}

In this section, we prove that one can find a maximum weight acyclic $\gamma$-nonzero set in a $\Gamma$-labelled graph $(G,\gamma)$ in polynomial time by applying the symmetric greedy algorithm on $\Gamma$-graphic delta-matroids. Let us first state the problem.

\begin{tcolorbox}
  \textsc{Maximum Weight Acyclic $\gamma$-nonzero Set} \\
  \textbf{Input:} A $\Gamma$-labelled graph $(G, \gamma)$ and a weight $w : E(G) \rightarrow \mathbb{Q}$. \\
  \textbf{Problem:} Find an acyclic $\gamma$-nonzero set $F$ in $G$ maximizing $\sum_{e \in F} w(e)$.
\end{tcolorbox}

Recall that Theorem~\ref{thm:greedy_algo} allows us to find a maximum weight feasible set in a delta-matroid by using the symmetric greedy algorithm in Algorithm~\ref{algo:greedy_algo}.
As we proved that the set of acyclic $\gamma$-nonzero sets in a $\Gamma$-labelled graph $(G,\gamma)$ forms a $\Gamma$-graphic delta-matroid in Section~\ref{sec: delta-matroids from group-labelled graphs}, we can apply Theorem~\ref{thm:greedy_algo} to solve \textsc{Maximum Weight Acyclic $\gamma$-nonzero Set}, but it requires a subroutine that decides in polynomial time whether a pair of two disjoint sets $X$ and $Y$ of $E(G)$ is separable in $\mathcal{G}(G,\gamma)$.
In the remainder of this section, we focus on developing this subroutine.

We assume that the addition of two elements of $\Gamma$ and testing whether an element of $\Gamma$ is zero can be done in time polynomial in the length of the input.

\begin{theorem}\label{thm:oracle}
  Given a $\Gamma$-labelled graph $(G,\gamma)$ and disjoint subsets $X$, $Y$ of $E(G)$, one can decide in polynomial time whether $G$ has an acyclic $\gamma$-nonzero set $F$ such that $X \subseteq F$ and $Y \cap F = \emptyset$.
\end{theorem}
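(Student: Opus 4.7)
My plan is to reduce the separability question to the existence of a feasible set, which is always guaranteed by Theorem~\ref{thm:delta}(1), via iterative contraction of the edges in $X$ and deletion of the edges in $Y$. First I would verify that $X$ is acyclic in $G$; if not, return no, since no acyclic set can contain $X$.

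Next, I would iteratively contract the edges of $X$ one at a time, maintaining a current $\Gamma$-labelled graph $(G', \gamma')$, initially equal to $(G, \gamma)$. For each $e \in X$, I use Lemma~\ref{lem:gtunnel} to test whether $e$ is a $\gamma'$-tunnel in $G'$. If so, no acyclic $\gamma'$-nonzero set of $G'$ contains $e$, and by iteratively applying Lemma~\ref{lem:contraction} no acyclic $\gamma$-nonzero set of $G$ contains the already-contracted edges of $X$ together with $e$; return no. Otherwise, since $X$ is acyclic, $e$ is not a loop in $G'$, so I replace $(G',\gamma')$ by $(G',\gamma')/e$ and continue.

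After all edges of $X$ are contracted, I process the edges of $Y$ one at a time in the analogous fashion: for each $e \in Y$, I use Lemma~\ref{lem:gbridge} to test whether $e$ is a $\gamma'$-bridge in $G'$. If so, every acyclic $\gamma'$-nonzero set of $G'$ contains $e$, and by iteratively applying Lemma~\ref{lem:deletion} every extension of $X$ to an acyclic $\gamma$-nonzero set in $G$ must meet $Y$; return no. Otherwise, I replace $(G', \gamma')$ by $(G', \gamma') \setminus e$ and continue. Edges of $Y$ that become loops during the contraction phase are never bridges and are simply deleted without difficulty. Once all of $X$ and $Y$ have been handled, Theorem~\ref{thm:delta}(1) gives an acyclic $\gamma'$-nonzero set in the final graph, which lifts along the chain of contractions and deletions to an acyclic $\gamma$-nonzero set in $G$ containing $X$ and disjoint from $Y$; return yes.

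Each step---checking acyclicity, testing tunnel or bridge status, and performing one contraction or deletion---takes polynomial time via standard connectivity algorithms and arithmetic in $\Gamma$, and the number of iterations is at most $|X| + |Y| \leq |E(G)|$, yielding a polynomial-time algorithm overall. The delicate point I expect to have to justify carefully is the correctness of the iterative reduction: the obstructions (tunnels while processing $X$, bridges while processing $Y$) depend on the current graph rather than the original, but Lemmas~\ref{lem:gtunnel} and~\ref{lem:gbridge} characterize these obstructions locally in terms of the current graph, while Lemmas~\ref{lem:contraction} and~\ref{lem:deletion} bridge consecutive graphs; together they ensure that the algorithm returns no exactly when separation is impossible.
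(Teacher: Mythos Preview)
Your proposal is correct and rests on the same idea as the paper, but the packaging differs. The paper condenses the entire test into a single numerical criterion: after checking that $X$ is acyclic, it proves (Proposition~\ref{prop:separating_set}, via Lemmas~\ref{lem: kappa monotone}, \ref{lem: tunnel set}, \ref{lem: bridge set}) that an acyclic $\gamma$-nonzero set $F$ with $X\subseteq F$ and $F\cap Y=\emptyset$ exists if and only if $\kappa((G,\gamma)/X\setminus Y)=\kappa(G,\gamma)$, and then simply computes both sides of that equation. Your iterative procedure is the unrolled form of the same argument: an edge $e$ is a $\gamma'$-tunnel (respectively $\gamma'$-bridge) in the current graph precisely when contracting (respectively deleting) it strictly increases $\kappa$, so by monotonicity of $\kappa$ (Lemma~\ref{lem: kappa monotone}) your step-by-step checks detect exactly the same obstruction as the paper's one-shot comparison. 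The advantage of your route is that it needs only the minor lemmas of Section~\ref{sec:minor} and avoids stating the batched Lemmas~\ref{lem: tunnel set}--\ref{lem: bridge set} separately; the advantage of the paper's route is a cleaner final statement and a constant number of graph computations rather than $|X|+|Y|$ of them.
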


To prove Theorem~\ref{thm:oracle}, we will characterize separable pairs $(X,Y)$ in $\mathcal{G}(G,\gamma)$.
Recall that, for a $\Gamma$-labelled graph $(G,\gamma)$, $\kappa(G,\gamma)$ is the number of components $C$ of $G$ such that $\gamma|_{V(C)}\equiv 0$.

\begin{lemma}
  \label{lem: kappa monotone}
  Let $\Gamma$ be an abelian group and $(G,\gamma)$ be a $\Gamma$-labelled graph.
  Then $\kappa((G,\gamma) \setminus e) \geq \kappa(G,\gamma)$ and $\kappa((G,\gamma)/e) \geq \kappa(G,\gamma)$ for every edge $e$ of $G$.
\end{lemma}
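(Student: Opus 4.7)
The plan is to analyze both operations separately by tracking how the components of $G$ evolve and how $\gamma$ behaves on them, showing that no component counted by $\kappa(G,\gamma)$ is destroyed and that some components may even be created.

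For the deletion inequality $\kappa((G,\gamma)\setminus e) \geq \kappa(G,\gamma)$, I would let $C$ denote the component of $G$ containing $e$. Every component of $G$ other than $C$ is a component of $G\setminus e$ as well; in particular, the components of $G$ disjoint from $C$ on which $\gamma$ vanishes identically still contribute to $\kappa((G,\gamma)\setminus e)$. If $\gamma|_{V(C)}\not\equiv 0$, then $C$ was not counted in $\kappa(G,\gamma)$, so nothing is lost. If $\gamma|_{V(C)}\equiv 0$, then the one or two components of $G\setminus e$ into which $C$ splits have vertex sets contained in $V(C)$, so $\gamma$ still vanishes identically on each of them, giving at least the same contribution as $C$ did (and possibly more, if $e$ is a bridge in $C$).

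For the contraction inequality $\kappa((G,\gamma)/e) \geq \kappa(G,\gamma)$, I would first dispose of the loop case: if $e$ is a loop, then by definition $(G,\gamma)/e = (G,\gamma)\setminus e$ and we invoke the previous paragraph. Otherwise let $e=uv$ with $u\neq v$, let $C$ be the component of $G$ containing $e$, and let $\gamma'$ be the labelling on $G/e$ obtained from the contraction, so that the merged vertex $e^\ast$ receives $\gamma'(e^\ast)=\gamma(u)+\gamma(v)$ while every other vertex keeps its original label. Components of $G$ other than $C$ are unchanged, so again I only need to compare $C$ with the component $C/e$ of $G/e$. If $\gamma|_{V(C)}\equiv 0$, then in particular $\gamma(u)=\gamma(v)=0$, so $\gamma'(e^\ast)=0$ and $\gamma'$ still vanishes identically on $V(C/e)$, preserving that contribution. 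Again, new all-zero components may appear (precisely when $e$ is a $\gamma$-tunnel), which can only increase $\kappa$.

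No step looks subtle: the whole lemma is just the observation that vanishing of $\gamma$ on a vertex set is preserved when passing to a subset (deletion) and is preserved under the sum-combining rule for the new vertex $e^\ast$ (contraction). The only bookkeeping to be careful about is handling the loop case of contraction by reducing it to deletion, and noting that splitting a component into two when an edge is a bridge can only help, never hurt.
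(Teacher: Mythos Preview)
Your argument is correct and follows essentially the same idea as the paper's proof. The paper is simply terser: it immediately reduces to the case where $G$ is connected (using that $\kappa$ is additive over components) and $\kappa(G,\gamma)=1$, which is exactly your case $\gamma|_{V(C)}\equiv 0$, and then observes that deletion and contraction preserve the identically-zero labelling; your component-by-component bookkeeping spells out the same reasoning more explicitly.
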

\begin{proof}
  We may assume that $G$ is connected and $\kappa(G,\gamma)=1$.
  Then $\gamma \equiv 0$ and therefore $\kappa((G,\gamma) \setminus e) \geq 1$ and $\kappa((G,\gamma)/e) = 1$.
\end{proof}

\begin{lemma}
  \label{lem: tunnel set}
  Let $\Gamma$ be an abelian group, $(G,\gamma)$ be a $\Gamma$-labelled graph, and $X$ be an acyclic set of edges of~$G$. Let $\gamma':V(G/X)\rightarrow\Gamma$ be a map such that $(G/X,\gamma')=(G,\gamma)/X$. Then the following hold.
  \begin{enumerate}[label=\rm(\arabic*)]
  \item\label{item:tunnel_cont1} If $\kappa((G,\gamma)/X)=\kappa(G,\gamma)$ and $F$ is an acyclic $\gamma'$-nonzero set in $G/X$, then $F\cup X$ is an acyclic $\gamma$-nonzero set in $G$.
  \item\label{item:tunnel_cont2} If $\kappa((G,\gamma)/X)>\kappa(G,\gamma)$, then $G$ has no acyclic $\gamma$-nonzero set containing $X$.
  \end{enumerate}
\end{lemma}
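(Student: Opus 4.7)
The plan is to prove both statements simultaneously by induction on $|X|$, using the contraction Lemma~\ref{lem:contraction} as the main bridge between feasibility in $(G,\gamma)$ and in $(G,\gamma)/e$. The base case $|X|=0$ is trivial: \ref{item:tunnel_cont1} just says $F$ is already acyclic $\gamma$-nonzero, and \ref{item:tunnel_cont2} is vacuous since its hypothesis $\kappa(G,\gamma)>\kappa(G,\gamma)$ fails. For the induction step, pick any $e\in X$. Because $X$ is acyclic in $G$, $e$ cannot be a loop, and a standard matroid fact shows that $X-\{e\}$ remains acyclic in $G/e$. Writing $(G,\gamma)/e=(G/e,\gamma_{1})$, we then have $(G,\gamma)/X=(G/e,\gamma_{1})/(X-\{e\})$, and the labelling on the common graph $G/X$ coincides with $\gamma'$ in both descriptions.

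For \ref{item:tunnel_cont1}, applying Lemma~\ref{lem: kappa monotone} along the chain of contractions gives $\kappa(G,\gamma)\leq\kappa((G,\gamma)/e)\leq\kappa((G,\gamma)/X)=\kappa(G,\gamma)$, so $\kappa((G,\gamma)/e)=\kappa(G,\gamma)$, meaning $e$ is not a $\gamma$-tunnel; the same chain also yields $\kappa((G/e,\gamma_{1})/(X-\{e\}))=\kappa(G/e,\gamma_{1})$. Thus the induction hypothesis applies to $(G/e,\gamma_{1})$ with the acyclic set $X-\{e\}$, showing that $F\cup(X-\{e\})$ is acyclic $\gamma_{1}$-nonzero in $G/e$, i.e.\ feasible in $\mathcal{G}((G,\gamma)/e)$. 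Since $e$ is neither a loop nor a $\gamma$-tunnel, Lemma~\ref{lem:contraction} gives $\mathcal{G}((G,\gamma)/e)=\mathcal{G}(G,\gamma)\triangle\{e\}\setminus e$, and lifting feasibility across this identity shows that $F\cup X$ is feasible in $\mathcal{G}(G,\gamma)$, i.e.\ acyclic $\gamma$-nonzero in $G$.

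For \ref{item:tunnel_cont2}, again pick $e\in X$ and split into cases. If $e$ is a $\gamma$-tunnel, then Lemma~\ref{lem:gtunnel} immediately yields that no $\gamma$-nonzero set of $G$ contains $e$, so none can contain $X$. Otherwise $\kappa((G,\gamma)/e)=\kappa(G,\gamma)$, which together with the hypothesis gives $\kappa((G/e,\gamma_{1})/(X-\{e\}))=\kappa((G,\gamma)/X)>\kappa(G,\gamma)=\kappa(G/e,\gamma_{1})$, so by induction $G/e$ has no acyclic $\gamma_{1}$-nonzero set containing $X-\{e\}$. If $G$ had an acyclic $\gamma$-nonzero set $F\supseteq X$, then by Lemma~\ref{lem:contraction} the set $F-\{e\}$ would be feasible in $\mathcal{G}((G,\gamma)/e)$, producing an acyclic $\gamma_{1}$-nonzero set in $G/e$ containing $X-\{e\}$, a contradiction.

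The main obstacle I expect is not conceptual but bookkeeping: verifying that $X-\{e\}$ really is acyclic in $G/e$, that the iterated contraction $(G,\gamma)/X$ coincides with $(G/e,\gamma_{1})/(X-\{e\})$ as $\Gamma$-labelled graphs, and that the twist by $\{e\}$ in Lemma~\ref{lem:contraction} is correctly unwound both when lifting feasibility from $G/e$ to $G$ in \ref{item:tunnel_cont1} and when restricting it from $G$ to $G/e$ in \ref{item:tunnel_cont2}. Once these translations are done carefully, both parts drop out of the induction.
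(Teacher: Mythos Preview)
Your proof is correct, and the bookkeeping you flag is routine. The paper takes a different route for \ref{item:tunnel_cont1}: rather than inducting on $|X|$, it reduces to a connected $G$ and argues directly from the definitions, splitting on whether $\kappa(G,\gamma)$ equals $0$ or $1$; in the former case each component of $(V(G),F\cup X)$ contracts to a component of $(V(G/X),F)$ with the same label sum, and in the latter $\gamma\equiv 0$ forces $F\cup X$ to be a spanning tree. For \ref{item:tunnel_cont2} the paper also inducts on $|X|$ but peels off the edges in the opposite order: it first contracts $X'=X-\{e\}$, disposes of the case $\kappa((G,\gamma)/X')>\kappa(G,\gamma)$ by the induction hypothesis on $X'$, and then applies the base case to the single edge $e$ in $G/X'$; the final implication from $G/X'$ back to $G$ is left implicit and is exactly the contrapositive passage through Lemma~\ref{lem:contraction} that you spell out. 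Your approach is more uniform---a single induction with Lemma~\ref{lem:contraction} doing all the lifting in both directions---while the paper's direct argument for \ref{item:tunnel_cont1} avoids any dependency on Lemma~\ref{lem:contraction} and works straight from the definition of $\gamma$-nonzero.
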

\begin{proof}
 Let us first prove~\ref{item:tunnel_cont1}. By considering each component, we may assume that $G$ is connected. Since $X$ is acyclic, $F\cup X$ is acyclic in $G$.
 
  If $\kappa((G,\gamma)/X)=\kappa(G,\gamma)=1$, then $\gamma\equiv 0$ and $F$ is the edge set of a spanning tree of $G/X$ by~\ref{item:def2}.
Hence $F\cup X$ is the edge set of a spanning tree of $G$, which implies that $F\cup X$ is acyclic $\gamma$-nonzero in~$G$.

  If $\kappa((G,\gamma)/X)=\kappa(G,\gamma)=0$, then let $H'=(V(G/X),F)$ be a subgraph of $G/X$ and $H=(V(G),F\cup X)$ be a subgraph of $G$.
  Then, for each component $C$ of $H$, there exists a component $C'$ of~$H'$ such that $C'=C/(E(C)\cap X)$. Then $\sum_{u\in V(C)}\gamma(u)=\sum_{u\in V(C')}\gamma'(u)\neq 0$ by~\ref{item:def1}.
  Hence $F\cup X$ is an acyclic $\gamma$-nonzero set in $G$ and \ref{item:tunnel_cont1} holds.
  
  Now let us prove~\ref{item:tunnel_cont2}.
  We proceed by induction on $|X|$.
  
  If $|X|=1$, then $e\in X$ is a $\gamma$-tunnel and by Lemma~\ref{lem:gtunnel}, there is no acyclic $\gamma$-nonzero set containing~$X$. So we may assume that $|X|>1$. Let $e\in X$ and $X'=X-\{e\}$.
  
  By the induction hypothesis, we may assume that $\kappa((G,\gamma)/X')=\kappa(G,\gamma)$. Let $\gamma'':V(G/X')\rightarrow\Gamma$ be a map such that $(G/X',\gamma'')=(G,\gamma)/X'$.
  Since $\kappa((G,\gamma)/X)=\kappa((G,\gamma)/X'/e)>\kappa((G,\gamma)/X')$, by the induction hypothesis, $G/X'$ has no acyclic $\gamma''$-nonzero set containing $e$. Therefore, $G$ has no acyclic $\gamma$-nonzero set containing $X$.
\end{proof}

\begin{lemma}
  \label{lem: bridge set}
  Let $\Gamma$ be an abelian group, $(G, \gamma)$ be a $\Gamma$-labelled graph, and $Y$ be a set of edges of~$G$.
  Then the following hold.
  \begin{enumerate}[label=\rm(\arabic*)]
    \item\label{item:bridge_del1} If $\kappa((G, \gamma) \setminus Y) = \kappa(G, \gamma)$ and $F$ is an acyclic $\gamma$-nonzero set in $G \setminus Y$, then $F$ is an acyclic $\gamma$-nonzero set in $G$.
    \item\label{item:bridge_del2} If $\kappa((G, \gamma) \setminus Y) > \kappa(G, \gamma)$, then $G$ has no acyclic $\gamma$-nonzero set $F$ such that $Y \cap F  = \emptyset$.
  \end{enumerate}
\end{lemma}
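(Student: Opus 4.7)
The plan is to mirror the proof of Lemma~\ref{lem: tunnel set}, replacing contractions by deletions and $\gamma$-tunnels by $\gamma$-bridges throughout. Both parts will be proved by induction on $|Y|$. The key inputs are Lemma~\ref{lem: kappa monotone}, which lets one peel off a single edge while controlling $\kappa$, together with Lemmas~\ref{lem:gbridge} and~\ref{lem:deletion}, which capture the effect of $\gamma$-bridges on feasibility.

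For part~\ref{item:bridge_del1}, the base case $|Y|=0$ is trivial. For the inductive step, pick an edge $e\in Y$ and set $Y'=Y-\{e\}$. Lemma~\ref{lem: kappa monotone} yields
\[
\kappa(G,\gamma)\leq\kappa((G,\gamma)\setminus Y')\leq\kappa((G,\gamma)\setminus Y)=\kappa(G,\gamma),
\]
so equality holds throughout and $e$ is not a $\gamma$-bridge of $G\setminus Y'$. Then Lemma~\ref{lem:deletion} applied inside $G\setminus Y'$ identifies the acyclic $\gamma$-nonzero sets of $(G\setminus Y')\setminus e=G\setminus Y$ with those acyclic $\gamma$-nonzero sets of $G\setminus Y'$ that avoid $e$. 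In particular $F$ is acyclic $\gamma$-nonzero in $G\setminus Y'$, and the induction hypothesis concludes that $F$ is acyclic $\gamma$-nonzero in $G$.

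For part~\ref{item:bridge_del2}, the base case $|Y|=1$ is immediate from Lemma~\ref{lem:gbridge}, as the unique $e\in Y$ is a $\gamma$-bridge and therefore lies in every acyclic $\gamma$-nonzero set. For $|Y|\geq 2$, pick $e\in Y$, set $Y'=Y-\{e\}$, and split on whether $\kappa((G,\gamma)\setminus Y')$ strictly exceeds $\kappa(G,\gamma)$ or equals it. In the first case the inductive hypothesis applied to $Y'$ already rules out any acyclic $\gamma$-nonzero set avoiding $Y'$, and a fortiori any avoiding $Y$. In the second case, $\kappa((G\setminus Y')\setminus e)=\kappa((G,\gamma)\setminus Y)>\kappa(G,\gamma)=\kappa((G,\gamma)\setminus Y')$, so $e$ is a $\gamma$-bridge of $G\setminus Y'$. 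If $F\subseteq E(G)-Y$ were acyclic $\gamma$-nonzero in $G$, then iterating Lemma~\ref{lem:deletion} along the edges of $Y'$---all of which are non-$\gamma$-bridges in their respective intermediate graphs by the same $\kappa$ calculation as in part~\ref{item:bridge_del1}---would show that $F$ is acyclic $\gamma$-nonzero in $G\setminus Y'$. Lemma~\ref{lem:gbridge} then forces $e\in F$, contradicting $F\cap Y=\emptyset$.

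The only delicate point is the iterated converse use of Lemma~\ref{lem:deletion} that moves $F$ from $G$ down to $G\setminus Y'$ inside the second case of part~\ref{item:bridge_del2}. This is nothing more than the other inclusion of the set equality in Lemma~\ref{lem:deletion}, applied one edge at a time while the kappa-preservation is maintained; no new ideas are required, and the overall argument is a direct analogue of Lemma~\ref{lem: tunnel set}.
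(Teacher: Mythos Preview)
Your argument is correct. For part~\ref{item:bridge_del2} it matches the paper's induction on $|Y|$ closely; the one place you differ is in transferring $F$ from $G$ down to $G\setminus Y'$, which the paper asserts in a single sentence while you justify it by iterating Lemma~\ref{lem:deletion} edge by edge under the $\kappa$-preservation hypothesis.

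For part~\ref{item:bridge_del1}, however, the paper does \emph{not} induct: it reduces to $G$ connected and handles the two cases $\kappa(G,\gamma)=0$ and $\kappa(G,\gamma)=1$ directly from the definition of $\gamma$-nonzero, with no appeal to Lemma~\ref{lem:deletion}. (The same is true of part~\ref{item:tunnel_cont1} in Lemma~\ref{lem: tunnel set}, so your stated plan to ``mirror'' that lemma by inducting on both parts is a slight departure from what the paper actually does there.) Your inductive route through Lemma~\ref{lem:deletion} is a clean alternative and makes the role of $\kappa$-preservation explicit at every step; the paper's direct argument is shorter but asks the reader to unpack~\ref{item:def1} and~\ref{item:def2} by hand.
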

\begin{proof}
  Let us first prove~\ref{item:bridge_del1}.
  By considering each component, we may assume that $G$ is connected.

If $\kappa((G,\gamma)\setminus Y)=\kappa(G,\gamma)=1$, then $\gamma\equiv 0$ and the set $F$ is the edge set of a spanning tree of $G \setminus Y$ by~\ref{item:def2}.
  Then $F$ is an acyclic $\gamma$-nonzero set in $G$.
  
  If $\kappa((G,\gamma)\setminus Y)=\kappa(G,\gamma)=0$, then for each component $C$ of $G\setminus Y$, we have $\gamma|_{V(C)} \not\equiv 0$.
  Then, $\sum_{v\in V(C)} \gamma(v) \neq 0$ for each component $C$ of $(V(G), F)$.
  So $F$ is an acyclic $\gamma$-nonzero set in $G$.

  Let us show \ref{item:bridge_del2}.
  We proceed by induction on $|Y|$.
  If $|Y|=1$, then $e\in Y$ is a $\gamma$-bridge so it is done by Lemma~\ref{lem:gbridge}.
  Now we assume $|Y| \geq 2$.
  Let $e \in Y$ and $Y'=Y-\{e\}$.
  By the induction hypothesis, we may assume that $\kappa(G \setminus Y', \gamma) = \kappa(G, \gamma)$.
  Since $\kappa(G \setminus Y, \gamma)=\kappa(G \setminus Y'\setminus e, \gamma)>\kappa(G\setminus Y', \gamma)$, by the induction hypothesis, every acyclic $\gamma$-nonzero set in $G\setminus Y'$ contains $e$.
  Since every acyclic $\gamma$-nonzero set $F$ in $G$ not intersecting $Y'$ is an acyclic $\gamma$-nonzero set in $G\setminus Y'$, every acyclic $\gamma$-nonzero set in $G$ intersects $Y$.
\end{proof}

\begin{proposition}\label{prop:separating_set}
  Let $\Gamma$ be an abelian group and $(G, \gamma)$ be a $\Gamma$-labelled graph.
  Let $X$ and $Y$ be disjoint subsets of $E(G)$ such that $X$ is acyclic in $G$.
  Then $\kappa((G, \gamma) / X \setminus Y) = \kappa(G, \gamma)$ if and only if $G$ has an acyclic $\gamma$-nonzero set $F$ such that $X \subseteq F$ and $Y\cap F = \emptyset$.
\end{proposition}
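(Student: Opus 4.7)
The plan is to prove the two directions separately by chaining the contraction and deletion lemmas already established. Throughout, let $\gamma'$ denote the labelling of $(G,\gamma)/X$ and $\gamma''$ the labelling of $(G,\gamma)/X\setminus Y$.

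For the forward direction, assume $\kappa((G,\gamma)/X\setminus Y)=\kappa(G,\gamma)$. Applying Lemma~\ref{lem: kappa monotone} twice gives the chain
\[
\kappa(G,\gamma)\le\kappa((G,\gamma)/X)\le\kappa((G,\gamma)/X\setminus Y)=\kappa(G,\gamma),
\]
so each intermediate inequality is an equality. Theorem~\ref{thm:delta} applied to $(G,\gamma)/X\setminus Y$ produces an acyclic $\gamma''$-nonzero set $F'$ in $G/X\setminus Y$. Lemma~\ref{lem: bridge set}\ref{item:bridge_del1} promotes $F'$ to an acyclic $\gamma'$-nonzero set in $G/X$, and Lemma~\ref{lem: tunnel set}\ref{item:tunnel_cont1} lifts it once more: $F'\cup X$ is acyclic $\gamma$-nonzero in $G$. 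Since $F'\subseteq E(G)-X-Y$, the set $F=F'\cup X$ satisfies the required $X\subseteq F$ and $F\cap Y=\emptyset$.

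For the reverse direction, suppose $F$ is an acyclic $\gamma$-nonzero set with $X\subseteq F$ and $F\cap Y=\emptyset$. Because $F$ contains $X$, the contrapositive of Lemma~\ref{lem: tunnel set}\ref{item:tunnel_cont2}, together with Lemma~\ref{lem: kappa monotone}, forces $\kappa((G,\gamma)/X)=\kappa(G,\gamma)$. It therefore remains to establish $\kappa((G,\gamma)/X\setminus Y)=\kappa((G,\gamma)/X)$. I plan to exhibit an acyclic $\gamma'$-nonzero set in $G/X$ disjoint from $Y$ and then invoke the contrapositive of Lemma~\ref{lem: bridge set}\ref{item:bridge_del2} (with monotonicity) to conclude. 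The natural candidate is $F-X$, which is manifestly disjoint from $Y$; its acyclicity in $G/X$ is immediate from the acyclicity of $F$ in $G$ (no edge of $F-X$ becomes a loop since $F$ contains no cycle through $X$).

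The main step is to verify that $F-X$ is $\gamma'$-nonzero in $G/X$, and this is where I expect the bookkeeping to concentrate. I would argue component by component: each component $C$ of $(V(G),F)$ maps under contraction of $E(C)\cap X$ to a component $C^\ast$ of $(V(G/X),F-X)$ with $\sum_{v\in V(C^\ast)}\gamma'(v)=\sum_{v\in V(C)}\gamma(v)$. If $C$ satisfies~\ref{item:def1} through a nonzero sum, so does $C^\ast$; and if instead $\gamma|_{V(C)}\equiv 0$ while $G[V(C)]$ is a component of $G$, then $\gamma'|_{V(C^\ast)}\equiv 0$ and $(G/X)[V(C^\ast)]$ is a component of $G/X$, covering~\ref{item:def2}. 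This case analysis is the only delicate point, but it mirrors the verification already carried out inside the proof of Lemma~\ref{lem:contraction}, so no new ideas are needed.
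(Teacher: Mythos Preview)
Your proposal is correct and follows essentially the same route as the paper: both directions chain Lemma~\ref{lem: kappa monotone} with the two parts of Lemmas~\ref{lem: tunnel set} and~\ref{lem: bridge set}, using Theorem~\ref{thm:delta}\ref{item:d1} to produce the initial feasible set in the forward direction and the candidate $F-X$ in the reverse direction. The only difference is that the paper simply asserts ``$F-X$ is an acyclic $\gamma'$-nonzero set in $G/X$'' without justification, whereas you spell out the component-by-component check; your extra detail is sound and indeed parallels the bookkeeping in Lemma~\ref{lem:contraction}.
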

\begin{proof}
Let us prove the forward direction.
  By Lemma~\ref{lem: kappa monotone}, $\kappa((G,\gamma) / X \setminus Y) = \kappa((G,\gamma) / X) = \kappa(G,\gamma)$.
  Let $\gamma':V(G/X\setminus Y)\rightarrow\Gamma$ be a map such that $(G / X \setminus Y, \gamma') = (G, \gamma) / X \setminus Y$.
  By~\ref{item:d1} of Theorem~\ref{thm:delta}, there exists an acyclic $\gamma'$-nonzero set $F'$ in $G / X \setminus Y$.
  Since $\kappa((G,\gamma) / X \setminus Y) = \kappa((G,\gamma) / X)$, $F'$ is acyclic $\gamma'$-nonzero in $G / X$ by~\ref{item:bridge_del1} of Lemma~\ref{lem: bridge set}.
  Since $\kappa((G,\gamma) / X) = \kappa(G,\gamma)$, $F := F'\cup X$ is acyclic $\gamma$-nonzero in $G$ by~\ref{item:tunnel_cont1} of Lemma~\ref{lem: tunnel set}.
  Therefore, $F$ is an acyclic $\gamma$-nonzero set in $G$ such that $X \subseteq F$ and $Y \cap F = \emptyset$.
  
  Now let us prove the backward direction.
  Let $F$ be an acyclic $\gamma$-nonzero set in $G$ such that $X \subseteq F$ and $Y \cap F=\emptyset$.
  Let $\gamma':V(G/X)\rightarrow\Gamma$ be a map such that $(G/X, \gamma') = (G, \gamma) / X$.
  Then $F-X$ is an acyclic $\gamma'$-nonzero set in $G/X$ not intersecting $Y$, so we have $\kappa((G,\gamma) / X \setminus Y) = \kappa((G,\gamma) / X)$ by~\ref{item:bridge_del2} of Lemma~\ref{lem: bridge set}.
  Since $F$ is an acyclic $\gamma$-nonzero set containing $X$ in $G$, we have $\kappa((G,\gamma) / X) = \kappa(G,\gamma)$ by~\ref{item:tunnel_cont2} of Lemma~\ref{lem: tunnel set}.
\end{proof}

\begin{proof}[Proof of Theorem~\ref{thm:oracle}]
  Given a $\Gamma$-labelled graph $(G,\gamma)$ and disjoint subsets $X$, $Y$ of $E(G)$, we can compute $\kappa((G,\gamma)/X\setminus Y)$ in polynomial time and therefore, by Proposition~\ref{prop:separating_set}, we can decide whether there exists an acyclic $\gamma$-nonzero set $F$ in $G$ such that $X \subseteq F$ and $Y \cap F = \emptyset$.
\end{proof}

Now we are ready to show Theorem~\ref{thm:main_algo}

\begin{theorem2}
  \textsc{Maximum Weight Acyclic $\gamma$-nonzero Set} is solvable in polynomial time.
\end{theorem2}

\begin{proof}%
  Let $M = \mathcal{G}(G,\gamma)$ be a $\Gamma$-graphic delta-matroid.
  The set of acyclic $\gamma$-nonzero sets in $G$ is equal to the set of feasible sets of $M$.
  By Theorem~\ref{thm:oracle}, we can decide in polynomial time whether a pair $(X,Y)$ of disjoint subsets $X$ and $Y$ of $E(G)$ is separable in $M$.
  It implies that the symmetric greedy algorithm in Algorithm~\ref{algo:greedy_algo} for $M$ and $w$ runs in polynomial time.
  By Theorem~\ref{thm:greedy_algo}, we can obtain an acyclic $\gamma$-nonzero set $F$ in $G$ maximizing $\sum_{e\in F} w(e)$.
\end{proof}

\section{Even $\Gamma$-graphic delta-matroids}\label{sec: even gamma-graphic delta-matroids}

In this section, we show that every even $\Gamma$-graphic delta-matroid is graphic.

\begin{lemma}\label{lem:even1}
  Let $(G, \gamma)$ be a $\Gamma$-labelled graph, and $\eta : V(G) \rightarrow \mathbb{Z}_2$ such that $\eta(v) = 0$ if and only if $\gamma(v) = 0$ for each $v\in V(G)$.
  If $\mathcal{G}(G,\gamma)$ is even, then, for each connected subgraph $H$ of $G$, $\sum_{u\in V(H)} \eta(u) = 0$ if and only if $\sum_{u \in V(H)} \gamma(u) = 0$.
\end{lemma}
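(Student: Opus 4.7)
My plan is to prove the contrapositive. Suppose some connected subgraph $H$ of $G$ violates the claimed equivalence; I will exhibit a minor of $\mathcal{G}(G,\gamma)$ that is not even. Writing $n_H := |\{v \in V(H) : \gamma(v) \neq 0\}|$, the failure of the equivalence places $H$ into one of two \emph{bad} cases: (A) $n_H$ is even and $\sum_{v \in V(H)} \gamma(v) \neq 0$, or (B) $n_H$ is odd and $\sum_{v \in V(H)} \gamma(v) = 0$. Since badness depends only on $V(H)$ and $\gamma|_{V(H)}$, any spanning tree $T$ of $H$ is again bad. Deleting from $G$ every edge outside $E(T)$ and then deleting the resulting isolated vertices (precisely $V(G) \setminus V(T)$) exhibits $(T, \gamma|_{V(T)})$ as a minor of $(G,\gamma)$.

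Next I would reduce $T$ to a bad tree in which every vertex carries a nonzero label. So long as some vertex $v$ satisfies $\gamma(v) = 0$: if $v$ is a leaf, I delete its incident edge and then delete $v$; if $v$ has degree at least $2$, I contract any edge incident to $v$. Each of these operations produces a smaller tree that is a minor of the current $\Gamma$-labelled graph, and it preserves both $n$ and the global $\gamma$-sum, because the vertex removed or merged has label $0$; hence the reduced tree remains bad. Since a single vertex is never bad, the process terminates at a bad tree $(T^*,\gamma^*)$ with $|V(T^*)| \geq 2$ and $\gamma^*(v) \neq 0$ for every $v \in V(T^*)$.

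Finally, I would display two feasible sets of $\mathcal{G}(T^*, \gamma^*)$ of opposite parity. The empty set is feasible because each singleton component satisfies~\ref{item:def1}. In Case A, $|V(T^*)|$ is even and $\sum \gamma^* \neq 0$, so $E(T^*)$ itself is feasible and has odd size $|V(T^*)|-1$. In Case B, $|V(T^*)|$ is odd and $\sum \gamma^* = 0$; for any leaf $\ell$ of $T^*$ with incident edge $e_\ell$, the set $E(T^*) \setminus \{e_\ell\}$ has two components $\{\ell\}$ and $V(T^*) \setminus \{\ell\}$ whose $\gamma^*$-sums are $\gamma^*(\ell)$ and $-\gamma^*(\ell)$, both nonzero, so it is feasible with odd size $|V(T^*)|-2$. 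Therefore $\mathcal{G}(T^*, \gamma^*)$ is not even. By~\ref{item:mi1} of Proposition~\ref{prop: minor graft and delta-matroid} it is a minor of $\mathcal{G}(G,\gamma)$, and since minors of even delta-matroids are even, $\mathcal{G}(G,\gamma)$ is not even. I expect the subtle step to be choosing the right reduction in the middle paragraph; once we have passed to a bad tree with every label nonzero, the empty set together with (essentially) a spanning tree gives the required parity violation with no further work.
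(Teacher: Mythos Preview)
Your proof is correct, but it takes a genuinely different route from the paper's.

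The paper argues by induction on $|V(H)|$: it peels off a single vertex $v$ with $H\setminus v$ connected, applies the inductive hypothesis to $H\setminus v$, and then passes to a one-edge minor $(G',\gamma')$ in which the two endpoints carry the labels $\gamma(v)$ and $\sum_{u\in V(H\setminus v)}\gamma(u)$. The backward implication is handled by a direct parity check, and the forward implication is handled by observing that if $\sum\eta=0$ but $\sum\gamma\neq 0$ then $\mathcal{G}(G',\gamma')=(\{e\},\{\emptyset,\{e\}\})$, invoking Bouchet's excluded-minor characterization of even delta-matroids (Theorem~\ref{thm: excluded minor for even delta-matroids}).

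Your argument instead reduces in one shot to a ``bad'' tree $T^*$ with every label nonzero, and then exhibits feasible sets of opposite parity by hand ($\emptyset$ versus $E(T^*)$ or $E(T^*)\setminus\{e_\ell\}$). This bypasses Theorem~\ref{thm: excluded minor for even delta-matroids} entirely and is in that sense more self-contained; the price is a slightly longer reduction step (your middle paragraph) where the paper's induction does the same work one vertex at a time. Both arguments rest on Proposition~\ref{prop: minor graft and delta-matroid}\ref{item:mi1} and the closure of evenness under minors, so neither is strictly more elementary in its prerequisites, but yours avoids citing the excluded-minor theorem and makes the parity obstruction completely explicit.
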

\begin{proof}
  We proceed by induction on $|V(H)|$.
  We may assume that $|V(H)| \geq 2$.
  Then there is a vertex $v \in V(H)$ such that $H \setminus v$ is connected.
  Let $H' = H\setminus v$.
  By the induction hypothesis, $\sum_{u\in V(H')} \eta(u) = 0$ if and only if $\sum_{u \in V(H')} \gamma(u) = 0$.

  Let $(G',\gamma')$ be a $\Gamma$-labelled graph obtained from $(G,\gamma)$ by deleting all edges in $E(G) - E(H)$, contracting edges in $E(H\setminus v)$, and deleting loops and parallel edges.
  Let $e=vw$ be a unique edge of~$G'$.
  Observe that $\gamma'(v) = \gamma(v)$ and $\gamma'(w) = \sum_{u\in V(H')} \gamma(u)$.
  Let $\eta' : V(G') \rightarrow \mathbb{Z}_2$ be a map such that
  $\eta'(v)= \eta(v)$ and $\eta'(w) = \sum_{u\in V(H')} \eta(u)$.

Let us first check the backward direction. If $\sum_{u\in V(H)} \gamma(u) = \gamma'(v) + \gamma'(w) = 0$, then $\eta'(v) = \eta'(w)$ and so $\sum_{u\in V(H)} \eta(u) = \eta'(v) + \eta'(w) = 0$.
  
  Now let us prove the forward direction. Suppose that $\sum_{u\in V(H)} \eta(u) = 0$ and $\sum_{u\in V(H)} \gamma(u) \neq 0$. Then $\eta'(v) +\eta'(w)=0$ and $\gamma'(v)+\gamma'(w)\neq 0$ and therefore all of $\gamma'(v)$, $\gamma'(w)$, and $\gamma'(v) + \gamma'(w)$ are nonzero. Hence $\mathcal{G}(G',\gamma') = (\{e\}, \{\emptyset, \{e\}\})$. By Theorem~\ref{thm: excluded minor for even delta-matroids} and \ref{item:mi1} of Proposition~\ref{prop: minor graft and delta-matroid}, $\mathcal{G}(G,\gamma)$ is not even,
  which is a contradiction.
\end{proof}

\begin{proposition}\label{prop: gamma-graphic and even}
  Let $(G, \gamma)$ be a $\Gamma$-labelled graph.
  If $\mathcal{G}(G,\gamma)$ is even, then there is a map $\eta: V(G) \rightarrow \mathbb{Z}_2$ such that $\mathcal{G}(G,\gamma) = \mathcal{G}(G, \eta)$.
\end{proposition}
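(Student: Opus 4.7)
The plan is to use the natural candidate $\eta : V(G) \to \mathbb{Z}_2$ defined by $\eta(v) = 1$ if $\gamma(v) \neq 0$ and $\eta(v) = 0$ if $\gamma(v) = 0$, which is exactly the map considered in Lemma~\ref{lem:even1}. With this choice, the heavy lifting has already been done: Lemma~\ref{lem:even1} gives that for every connected subgraph $H$ of $G$,
\[
\sum_{u \in V(H)} \eta(u) = 0 \iff \sum_{u \in V(H)} \gamma(u) = 0.
\]
Observe also that by construction, $\eta|_{V(C)} \equiv 0$ holds precisely when $\gamma|_{V(C)} \equiv 0$, for any vertex subset $V(C)$.

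Next I would show that an edge set $F \subseteq E(G)$ is acyclic $\gamma$-nonzero in $G$ if and only if it is acyclic $\eta$-nonzero in $G$. The acyclicity condition depends only on $G$, so it is the same for both. For the nonzero condition, take any component $C$ of $(V(G), F)$; since $C$ is a connected subgraph of $G$, I can apply Lemma~\ref{lem:even1} to it. Condition~\ref{item:def2} (the component condition) reads the same way for $\gamma$ and $\eta$ because $\gamma|_{V(C)} \equiv 0$ iff $\eta|_{V(C)} \equiv 0$. For condition~\ref{item:def1}, if $\gamma|_{V(C)} \equiv 0$ (equivalently $\eta|_{V(C)} \equiv 0$) then both are automatically satisfied; otherwise Lemma~\ref{lem:even1} gives $\sum_{v \in V(C)} \gamma(v) \neq 0$ iff $\sum_{v \in V(C)} \eta(v) \neq 0$, so condition~\ref{item:def1} also transfers between $\gamma$ and $\eta$.

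Since the two families of feasible sets coincide and share the same ground set $E(G)$, we conclude $\mathcal{G}(G,\gamma) = \mathcal{G}(G,\eta)$, finishing the proof. There is no real obstacle here beyond unpacking the definitions and quoting Lemma~\ref{lem:even1}; the crux of the proposition was already absorbed into that lemma via the excluded minor $(\{e\}, \{\emptyset, \{e\}\})$ for even delta-matroids.
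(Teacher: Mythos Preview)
Your proposal is correct and follows essentially the same approach as the paper's proof: define $\eta$ by $\eta(v)=0\iff\gamma(v)=0$, invoke Lemma~\ref{lem:even1} on each component of $(V(G),F)$, and use the trivial equivalence $\gamma|_{V(C)}\equiv 0\iff\eta|_{V(C)}\equiv 0$ to conclude that the acyclic $\gamma$-nonzero sets coincide with the acyclic $\eta$-nonzero sets. Your write-up is in fact a bit more explicit than the paper's in separating the verification of~\ref{item:def1} and~\ref{item:def2}, but the argument is the same.
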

\begin{proof}
 
  Let $\eta: V(G) \rightarrow \mathbb{Z}_2$ is a map such that, for every $u\in V(G)$, $\eta(u) = 0$ if and only if $\gamma(u) = 0$. Let $F$ be a set of edges of $G$. Then, for each component $C$ of $(V(G),F)$, $\gamma|_{V(C)}\equiv 0$ if and only if $\eta|_{V(C)}\equiv 0$ and, by Lemma~\ref{lem:even1}, $\sum_{u\in V(C)}\gamma(u)\neq 0$ if and only if $\sum_{u\in V(C)}\eta(u)\neq 0$.
Therefore, $F$ is acyclic $\gamma$-nonzero in $G$ if and only if it is acyclic $\eta$-nonzero in $G$.
\end{proof}

We are ready to prove Theorem~\ref{thm:even}.

\begin{theorem3}
  Let $\Gamma$ be an abelian group.
  Then a $\Gamma$-graphic delta-matroid is even if and only if it is graphic.
\end{theorem3}

\begin{proof}[Proof of Theorem~\ref{thm:even}]
  Let $M$ be an even $\Gamma$-graphic delta-matroid.
  By twisting, we may assume that $M=\mathcal{G}(G,\gamma)$ for a $\Gamma$-labelled graph $(G,\gamma)$.
  By Proposition~\ref{prop: gamma-graphic and even}, $M$ is $\mathbb{Z}_2$-graphic.
  Conversely, Oum~{\cite[Theorem 5]{Oum2009circle}} proved that every graphic delta-matroid is even.
\end{proof}

\section{Representations of $\Gamma$-graphic delta-matroids}\label{sec: representations of Gamma-graphic delta-matroids}

We aim to study the condition on an abelian group $\Gamma$ and a field $\mathbb{F}$ such that every $\Gamma$-graphic delta-matroid is representable over $\mathbb{F}$.
Recall that a delta-matroid $M = (E,\mathcal{F})$ is representable over $\mathbb{F}$ if there is an $E\times E$ symmetric or skew-symmetric $A$ over $\mathbb{F}$ such that $\mathcal{F} = \{F \subseteq E: \text{$A[X]$ is nonsingular}\} \triangle X$ for some $X \subseteq E$.
If every $\Gamma$-graphic delta-matroid is representable over $\mathbb{F}$, then to prove this, we will construct symmetric matrices over $\mathbb{F}$ representing $\Gamma$-graphic delta-matroids.

For a graph $G = (V,E)$, let $\vec{G}$ be an orientation obtained from $G$ by arbitrarily assigning a direction to each edge.
Let $I_{\vec{G}} = (a_{ve})_{v\in V,\; e\in E}$ be a $V \times E$ matrix over $\mathbb{F}$ such that, for a vertex $v\in V$ and an edge $e\in E$,
\[
  a_{ve} =
  \begin{cases}
    1   & \text{if $v$ is the head of a non-loop edge $e$ in $\vec{G}$}, \\ 
    -1  & \text{if $v$ is the tail of a non-loop edge $e$ in $\vec{G}$}, \\ 
    0 & \text{otherwise}.
  \end{cases}
\]

\begin{lemma}
\label{lem:ort}
  Let $G = (V, E)$ be a graph and $\vec{G}_1$, $\vec{G}_2$ be orientations of $G$.
  If $W \subseteq V$, $F \subseteq E$, and $|W|=|F|$, then $\det(I_{\vec{G}_1}[W,F]) = \pm \det(I_{\vec{G}_2}[W,F])$.
\end{lemma}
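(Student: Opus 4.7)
The plan is to exploit the fact that changing orientations affects the incidence matrix only very locally. Concretely, I would observe that for any non-loop edge $e=uv$, flipping the direction of $e$ swaps the $+1$ and $-1$ entries of the column of $I_{\vec{G}}$ indexed by $e$, which is the same as negating that column; all other columns are untouched. For a loop $e$, the column indexed by $e$ is identically zero by definition (both endpoints of $e$ fall into the ``otherwise'' case), so orientation of $e$ has no effect on $I_{\vec{G}}$ at all.

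Setting $F_0 \subseteq E$ to be the set of non-loop edges on which $\vec{G}_1$ and $\vec{G}_2$ assign opposite directions, it follows that $I_{\vec{G}_2}$ is obtained from $I_{\vec{G}_1}$ by multiplying each column indexed by an element of $F_0$ by $-1$. Restricting to rows $W$ and columns $F$, the submatrix $I_{\vec{G}_2}[W,F]$ is therefore obtained from $I_{\vec{G}_1}[W,F]$ by negating the columns indexed by $F \cap F_0$. Since $|W|=|F|$, these submatrices are square, and multilinearity of the determinant yields
\[
\det(I_{\vec{G}_2}[W,F]) = (-1)^{|F \cap F_0|}\det(I_{\vec{G}_1}[W,F]) = \pm\det(I_{\vec{G}_1}[W,F]).
\]

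There is no real obstacle here; the only care needed is to treat loops separately, ensuring that the step ``reorienting an edge negates its column'' really does describe the passage from $I_{\vec{G}_1}$ to $I_{\vec{G}_2}$.
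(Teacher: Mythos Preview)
Your proof is correct and follows essentially the same approach as the paper: the paper's one-line proof simply observes that $I_{\vec{G}_1}$ is obtained from $I_{\vec{G}_2}$ by multiplying some columns by $-1$. Your version spells out the details (in particular the loop case and the multilinearity step), but the underlying idea is identical.
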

\begin{proof}
  The matrix $I_{\vec{G}_1}$ can be obtained from $I_{\vec{G}_2}$ by multiplying $-1$ to some columns.
\end{proof}

By slightly abusing the notation, we simply write $I_G$ to denote $I_{\vec{G}}$ for some orientation $\vec{G}$ of $G$.
The following two lemmas are easy exercises.

\begin{lemma}[see Oxley~{\cite[Lemma 5.1.3]{Oxley2011matroid}}]
  \label{lem: graph and incidence matrix}
  Let $G$ be a graph and $F$ be an edge set of $G$.
  Then $F$ is acyclic if and only if column vectors of $I_G$ indexed by the elements of $F$ are linearly independent.
\end{lemma}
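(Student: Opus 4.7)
The plan is to prove the two directions by looking at the incidence structure at individual vertices, which is the classical approach to this textbook fact.

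For the forward direction, I would argue by induction on $|F|$ that if $F$ is acyclic then the columns of $I_G$ indexed by $F$ are linearly independent. The base case $F = \emptyset$ is vacuous. For the inductive step, the subgraph $(V(G), F)$ is a forest, so it contains a leaf vertex $v$, i.e., a vertex incident with exactly one edge $e \in F$ (and $e$ is necessarily not a loop, since loops would contribute endpoints to themselves). Suppose $\sum_{f \in F} c_f \, I_G[\cdot, f] = 0$. Reading off the entry in the row indexed by $v$, only $e$ contributes, so $\pm c_e = 0$, hence $c_e = 0$. Then $F - \{e\}$ is still acyclic, so by induction all remaining coefficients vanish.

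For the backward direction, I would produce an explicit nontrivial linear dependence when $F$ contains a cycle. If $F$ contains a loop $e$, then by the definition of $I_G$ every entry in the $e$-column is $0$, giving a trivial dependence. Otherwise $F$ contains a cycle $C = v_0 e_1 v_1 e_2 \cdots e_k v_k$ with $v_k = v_0$ and all $e_i$ non-loops. Define $c_{e_i} \in \{+1,-1\}$ according to whether $e_i$ is oriented from $v_{i-1}$ to $v_i$ or from $v_i$ to $v_{i-1}$ in $\vec{G}$, and set $c_f = 0$ for $f \in F - E(C)$. At each vertex $v$ of $C$, the two edges of $C$ incident with $v$ contribute $+1$ and $-1$ with signs flipped by the chosen $c_{e_i}$, so the corresponding row sum is zero; at every other vertex, the row sum is obviously zero. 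Hence $\sum_{f \in F} c_f \, I_G[\cdot, f] = 0$, exhibiting a nontrivial linear dependence.

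The only potentially subtle point is making sure the sign bookkeeping in the cycle argument is correct, but since Lemma~\ref{lem:ort} already tells us the answer (linear (in)dependence) does not depend on the chosen orientation, we may simply reorient $\vec{G}$ so that the cycle $C$ is consistently directed as $v_0 \to v_1 \to \cdots \to v_k = v_0$, in which case each $c_{e_i} = 1$ works and the cancellation at each $v_i$ is immediate. This sidesteps any delicate sign tracking and completes the proof.
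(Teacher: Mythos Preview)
Your proof is correct and is exactly the standard argument for this textbook fact. Note that the paper itself does not supply a proof of this lemma: it is stated with a reference to Oxley and introduced as an ``easy exercise,'' so there is no in-paper argument to compare against. Your leaf-induction for the forward direction and your explicit cycle dependence for the backward direction (with the reorientation shortcut justified, as you observe, by the fact that multiplying columns by $\pm 1$ does not affect linear independence) are precisely what one finds in the cited source.
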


\begin{lemma}[see Matou\v{s}ek and Ne\v{s}et\v{r}il~{\cite[Lemma~8.5.3]{Matousek2009}}]
  \label{lem: det tree}
  Let $G = (V,E)$ be a tree.
  Then $\det(I_G[V - \{v\}, E]) = \pm 1$ for any vertex $v \in V$.
\end{lemma}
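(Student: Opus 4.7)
The plan is to prove the statement by induction on $|V|$. In the base case $|V|=1$, the tree has no edges, so $I_G[V-\{v\},E]$ is the empty $0\times 0$ matrix whose determinant is $1$ by convention. Lemma~\ref{lem:ort} lets me ignore signs coming from the choice of orientation throughout the argument, since the claim only concerns $\det$ up to sign; concretely, I may choose the orientation of each edge freely when applying cofactor expansion.

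For the inductive step, suppose $|V|\geq 2$, so $G$ has a leaf $u$; let $e=uw$ be its unique incident edge. I will split on whether $v=u$ or not.

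If $v\neq u$, then the row of $I_G[V-\{v\},E]$ indexed by $u$ has a single nonzero entry $\pm 1$, in column $e$, because $u$ is a leaf. Cofactor expansion along this row yields
\[
\det\bigl(I_G[V-\{v\},E]\bigr)=\pm\det\bigl(I_G[V-\{u,v\},E-\{e\}]\bigr),
\]
and the submatrix on the right is precisely the incidence matrix $I_{G\setminus u}$ of the tree $G\setminus u$ with the row for $v$ deleted, because deleting a leaf does not affect incidences among the remaining edges. Since $G\setminus u$ is a tree on $|V|-1$ vertices, the induction hypothesis gives $\pm 1$. If instead $v=u$, I apply the same idea to the column of $e$ in $I_G[V-\{u\},E]$: after removing the row of $u$, this column has only one nonzero entry, namely $\pm 1$ at the row indexed by the other endpoint $w$. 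Cofactor expansion along this column reduces the determinant to $\pm\det\bigl(I_{G\setminus u}[V(G\setminus u)-\{w\},E(G\setminus u)]\bigr)$, and $w$ is a vertex of the tree $G\setminus u$, so the induction hypothesis again gives $\pm 1$.

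There is no serious obstacle here; the argument is the standard total-unimodularity computation for tree incidence matrices. The only bookkeeping is to verify that after a cofactor expansion along the row of $u$ or the column of $e$, the resulting submatrix genuinely coincides (up to sign, allowed by Lemma~\ref{lem:ort}) with the incidence matrix of the smaller tree $G\setminus u$ restricted to the appropriate vertex and edge subsets, which is immediate from the definition of $I_G$.
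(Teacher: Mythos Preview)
Your proof is correct. The paper does not give its own proof of this lemma; it states it as an easy exercise and refers to Matou\v{s}ek and Ne\v{s}et\v{r}il~\cite[Lemma~8.5.3]{Matousek2009}. Your induction on $|V|$ via cofactor expansion at a leaf is exactly the standard argument one finds in such references, so there is nothing to compare.
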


\begin{lemma}\label{lem: labelled-graft assigned by nonzeros}
  Let $\Gamma$ be an abelian group with at least one nonzero element, and $(G, \gamma)$ be a $\Gamma$-labelled graph.
  Then there is a $\Gamma$-labelled graph $(H, \eta)$ such that
  \begin{enumerate}[label=\rm(\roman*)]
    \item $\eta(v) \neq 0$ for each vertex $v\in V(H)$ and
    \item $(G, \gamma)$ is a minor of $(H, \eta)$.
  \end{enumerate}
\end{lemma}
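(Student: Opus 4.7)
The plan is to split each zero-labelled vertex of $G$ into two adjacent vertices with labels $g$ and $-g$ for a fixed nonzero $g \in \Gamma$, and then recover $(G,\gamma)$ by contracting the added edges. Since $\Gamma$ has a nonzero element, such a $g$ exists, and because $-g = 0$ would force $g = 0$, the element $-g$ is also nonzero.

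Concretely, I would construct $(H,\eta)$ as follows. Let $Z = \{v \in V(G) : \gamma(v) = 0\}$. Form $H$ from $G$ by adding, for each $v \in Z$, a new pendant vertex $v'$ and a new edge $e_v = vv'$; define
\[
\eta(u) =
\begin{cases}
\gamma(u) & \text{if } u \in V(G) \text{ and } \gamma(u) \neq 0, \\
g & \text{if } u \in Z, \\
-g & \text{if } u = v' \text{ for some } v \in Z.
\end{cases}
\]
Then $\eta$ takes only nonzero values on $V(H)$, which verifies the first property.

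For the second property, let $X = \{e_v : v \in Z\}$ and consider $(H,\eta)/X$. The edges of $X$ form a matching, so contracting them one by one (in any order) identifies each pair $\{v, v'\}$ into a single vertex, and the new label at the merged vertex is $\eta(v) + \eta(v') = g + (-g) = 0 = \gamma(v)$, by the definition of contraction of $\Gamma$-labelled graphs given in Section~\ref{sec:minor}. The vertices of $V(G) \setminus Z$ and their labels are unchanged, and the edge set of $H \setminus X$ coincides with $E(G)$ (with the same endpoints in the quotient). Therefore $(H,\eta)/X = (G,\gamma)$, so $(G,\gamma)$ is a minor of $(H,\eta)$.

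There is no real obstacle here; the only point that requires care is that after the sequence of contractions, the labels at the merged vertices match $\gamma$ exactly, which follows immediately from the additive definition of $\gamma'$ in contraction. The construction uses no properties of $\Gamma$ beyond the existence of a single nonzero element.
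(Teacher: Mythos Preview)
Your proof is correct and uses essentially the same construction as the paper: attach a pendant vertex with label $-g$ to each zero-labelled vertex (relabelled $g$), then contract the pendant edges to recover $(G,\gamma)$. The only difference is cosmetic---the paper handles one zero-labelled vertex at a time by induction on $|Z(G,\gamma)|$, whereas you add all the pendants at once and contract the matching $X$ in a single step; your direct version is arguably cleaner.
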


\begin{proof}
  Let $Z(G,\gamma)$ be the set of vertices $v \in V(G)$ such that $\gamma(v) = 0$.
  We proceed by induction on~$|Z(G, \gamma)|$.

  We may assume that $v \in Z(G, \gamma)$.
  Choose a nonzero element $g \in \Gamma$.
  Let $G'$ be a graph obtained from $G$ by adding a new vertex $w$ adjacent only to $v$, and $\gamma'$ be a map from $V(G')$ to $\Gamma$ such that
  \[
    \gamma'(u) =
    \begin{cases}
      g         & \textrm{if $u = v$}, \\
      -g        & \textrm{if $u = w$}, \\
      \gamma(u) & \textrm{otherwise}.
    \end{cases}
  \]
  Then the $\Gamma$-labelled graph $(G, \gamma)$ is isomorphic to $(G', \gamma') / vw$.
  We know $|Z(G',\gamma')| = |Z(G,\gamma)|-1$.
  By the induction hypothesis, there is a $\Gamma$-labelled graph $(H,\eta)$ such that $Z(H,\eta) = \emptyset$ and $(G',\gamma')$ is a minor of $(H,\eta)$.
  The latter implies that $(G, \gamma)$ is a minor of~$(H, \eta)$.
\end{proof}

\begin{theorem}[Binet-Cauchy theorem]\label{thm: Cauchy-Binet formula}
  Let $X$ and $Y$ be finite sets.
  Let $M$ be an $X \times Y$ matrix and $N$ be a $Y \times X$ matrix with $|Y| \geq |X| = s$.
  Then
  \[
    \det (MN) = \sum_{S \in \binom{Y}{s}} \det (M[X,S]) \cdot \det (N[S, X]).
  \]
\end{theorem}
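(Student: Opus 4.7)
The statement is the classical Cauchy--Binet formula, so in a paper of this kind the expected approach would be to cite a standard reference rather than reproduce the argument. For a direct proof, the cleanest route is via multilinearity of the determinant. The plan: fix linear orders on $X$ and $Y$, write $s = |X|$, and observe that the $i$-th column of $MN$ is a linear combination $\sum_{k \in Y} N_{k,i}\, M_{*,k}$ of columns of $M$, where $M_{*,k}$ denotes the $k$-th column of $M$. Pulling each of these sums out of $\det(MN)$ by multilinearity yields
$$\det(MN) = \sum_{(k_1,\dots,k_s) \in Y^s} \left(\prod_{i=1}^{s} N_{k_i, i}\right) \det\bigl(M_{*,k_1},\dots,M_{*,k_s}\bigr).$$

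Terms in which $(k_1,\dots,k_s)$ has a repeated entry vanish, since the corresponding determinant has two equal columns. The remaining injective tuples I would group by their image $S \in \binom{Y}{s}$: once $S = \{s_1 < \dots < s_s\}$ is fixed, each such tuple corresponds to a unique permutation $\sigma \in \mathfrak{S}_s$ via $k_i = s_{\sigma(i)}$. Reordering the columns back into the canonical order of $S$ contributes a sign $\sgn(\sigma)$ and leaves $\det(M[X,S])$, while the remaining sum $\sum_{\sigma} \sgn(\sigma) \prod_{i=1}^{s} N_{s_{\sigma(i)}, i}$ is exactly the Leibniz expansion of $\det(N[S,X])$. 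Assembling the pieces gives the claimed identity.

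The only nontrivial step is the sign bookkeeping in matching the permutation sum against the Leibniz formula for $\det(N[S,X])$; once the indexing conventions are pinned down this is routine. An alternative is to derive the formula from the block-matrix identity by computing $\det\begin{pmatrix} I_X & -M \\ N & I_Y \end{pmatrix}$ via two different Schur complement expansions (which gives $\det(I_Y + NM)$ one way and a product of minors the other), but the multilinear argument is the more transparent of the two and is what I would write up if a proof were required at all.
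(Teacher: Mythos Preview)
Your proposal is correct, and you correctly anticipated the paper's treatment: the paper does not prove the Binet--Cauchy theorem at all, simply stating it as a classical result and immediately using it to derive Corollary~\ref{coro: 3-term CB formula}. Your multilinearity argument is a standard and complete proof, so there is nothing to compare against.
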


It is straightforward to prove the following lemma from the Binet-Cauchy theorem.

\begin{corollary}\label{coro: 3-term CB formula}
  Let $X$, $Y$, $Z$ be finite sets.
  Let $L$, $M$, $N$ be $X \times Y$, $Y \times Z$, $Z \times X$ matrices, respectively, with $|Y|, |Z| \geq |X| = s$.
  Then
  \[
    \det (LMN) = \sum_{S \in \binom{Y}{s}, \; T \in \binom{Z}{s}} \det(L[X, S]) \cdot \det(M[S,T]) \cdot \det(N[T,X]).
  \]
\end{corollary}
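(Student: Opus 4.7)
The plan is to apply the Binet-Cauchy theorem (Theorem~\ref{thm: Cauchy-Binet formula}) twice in succession, after first collapsing the product $LMN$ into a product of two matrices. Specifically, I would set $P = LM$, so $P$ is an $X \times Z$ matrix and $LMN = PN$. Since $|Z| \geq |X| = s$, a direct application of Binet-Cauchy gives
\[
  \det(LMN) \;=\; \det(PN) \;=\; \sum_{T \in \binom{Z}{s}} \det(P[X,T]) \cdot \det(N[T,X]).
\]

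For the second step, I would observe that for each $T \in \binom{Z}{s}$, the submatrix $P[X,T] = (LM)[X,T]$ factors as $L \cdot M[Y,T]$, since the $(x,t)$-entry of $LM$ is $\sum_{y\in Y} L_{xy} M_{yt}$, which depends only on the $t$-th column of $M$. Now $L$ is $X \times Y$ and $M[Y,T]$ is $Y \times T$ with $|X| = |T| = s$ and $|Y| \geq s$, so Binet-Cauchy applies again to give
\[
  \det(P[X,T]) \;=\; \det(L \cdot M[Y,T]) \;=\; \sum_{S \in \binom{Y}{s}} \det(L[X,S]) \cdot \det(M[S,T]).
\]

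Substituting this back into the earlier expression for $\det(LMN)$ and combining the two sums over $S$ and $T$ into a single double sum yields the claimed identity. There is no real obstacle here beyond careful bookkeeping of indices; the only point worth double-checking is that the restriction $(LM)[X,T]$ factors as $L \cdot M[Y,T]$ and not as some transpose, but this is immediate from the definition of matrix multiplication. The hypothesis $|Y|, |Z| \geq |X| = s$ is exactly what is needed so that both invocations of Binet-Cauchy are in the legal regime.
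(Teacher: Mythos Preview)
Your proposal is correct and is precisely the straightforward double application of the Binet--Cauchy theorem that the paper has in mind; the paper itself omits the proof, noting only that the corollary follows directly from Theorem~\ref{thm: Cauchy-Binet formula}. The only cosmetic point is that the paper's statement of Binet--Cauchy has identical left and right index sets, whereas in your second application $L$ is $X\times Y$ and $M[Y,T]$ is $Y\times T$ with $X\neq T$; this is harmless since $|X|=|T|=s$ and determinants are insensitive to relabelling of index sets.
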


\begin{theorem4}
  Let $p$ be a prime, $k$ be a positive integer, and $\mathbb{F}$ be a field of characteristic $p$. If $[\mathbb{F}:\GF(p)]\geq k$, then every $\mathbb{Z}_p^k$-graphic delta-matroid is representable over $\mathbb{F}$.
 \end{theorem4}
\begin{proof}
  Let $M$ be a $\mathbb{Z}_p^k$-graphic delta-matroid.
  By twisting, we may assume $M = \mathcal{G}(G, \gamma)$ for a $\mathbb{Z}_p^k$-labelled graph $(G,\gamma)$. Let $V=V(G)$ and $E=E(G)$.
  We may assume that $\gamma(v) \neq 0$ for each vertex $v$ of $V$ because otherwise, by applying Lemma~\ref{lem: labelled-graft assigned by nonzeros}, we can replace $(G,\gamma)$ by a $\mathbb{Z}_p^k$-labelled graph $(H,\eta)$ such that $\eta(u)\neq 0$ for each $u\in V(H)$ and $\mathcal{G}(G,\gamma)$ is a minor of $\mathcal{G}(H,\eta)$.

  Since $[\mathbb{F}:\GF(p)]\geq k$, there exists a set $\{\alpha_1, \dots, \alpha_{k}\}$ of linearly independent vectors of $\mathbb{F}$.
  So we can take an injective group homomorphism $\phi$ from $\mathbb{Z}_{p}^{k}$ to $\mathbb{F}$ such that
  \[
    \phi(c_1, \dots, c_k) = \sum_{i=1}^{k} c_i \alpha_i.
  \]
  Let $B = (b_{vw})$ be a $V \times V$ diagonal matrix over $\mathbb{F}$ such that $b_{vv} = 1/\phi(\gamma(v))$, and let $A := I_{G}^T B I_{G}$. Then~$A$ is an $E \times E$ symmetric matrix over $\mathbb{F}$.
  Observe that $\det(B) = 1/\prod_{v\in V} \phi(\gamma(v)) \neq 0$ and $\det(B[V-\{v\}]) = \det(B)\phi(\gamma(v))$ for each $v\in V$.

  Let $F$ be a set of edges of $G$.

  \begin{claim}
  \label{claim:repre1}
    If $F$ is not acyclic in $G$, then $\det(A[F]) = 0$.
  \end{claim}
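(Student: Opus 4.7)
The plan is to exploit the factorization $A = I_G^{T} B I_G$ directly. Observe that restricting to the principal submatrix indexed by $F$ gives
\[
A[F] = (I_G[V,F])^{T} \, B \, I_G[V,F],
\]
so $A[F]$ is a product in which the outer factors have exactly $|F|$ columns (resp.\ rows). Therefore
\[
\rnk(A[F]) \leq \rnk(I_G[V,F]).
\]

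Now the claim reduces to a rank computation. If $F$ is not acyclic in $G$, then by Lemma~\ref{lem: graph and incidence matrix}, the columns of $I_G$ indexed by $F$ are linearly dependent; equivalently, $\rnk(I_G[V,F]) < |F|$. Combining with the inequality above gives $\rnk(A[F]) < |F|$, so the square $F \times F$ matrix $A[F]$ is singular and $\det(A[F]) = 0$.

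Alternatively, one could invoke Corollary~\ref{coro: 3-term CB formula} with $L = (I_G[V,F])^{T}$, $M = B$, $N = I_G[V,F]$, together with the fact that $B$ is diagonal (so $\det(B[S,T]) = 0$ unless $S = T$), to obtain
\[
\det(A[F]) = \sum_{S \in \binom{V}{|F|}} \det(I_G[S,F])^{2} \cdot \det(B[S,S]),
\]
and then conclude by noting that each $\det(I_G[S,F])$ vanishes when the columns of $I_G[V,F]$ are linearly dependent. This expanded formula will presumably be reused for the companion claim (where $F$ is an acyclic $\gamma$-nonzero set and one must verify $\det(A[F]) \neq 0$), which is the genuinely delicate step; but for the present claim the rank argument is the shortest route and needs no computation.
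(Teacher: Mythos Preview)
Your proof is correct and is essentially identical to the paper's: the paper states $\rnk(A[F]) \leq \rnk(I_G[V,F])$, invokes Lemma~\ref{lem: graph and incidence matrix} to get $\rnk(I_G[V,F]) < |F|$, and concludes $\det(A[F]) = 0$. Your version simply spells out the factorization $A[F] = (I_G[V,F])^{T} B\, I_G[V,F]$ that justifies the rank inequality and adds a Binet--Cauchy alternative, but the core argument is the same.
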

  \begin{proof}
    We have $\rnk(A[F]) \leq \rnk(I_{G}[V,F])$.
    By Lemma~\ref{lem: graph and incidence matrix}, $\rnk(I_{G}[V,F]) < |F|$, which implies that $\det(A[F]) = 0$.
  \end{proof}

  \begin{claim}
  \label{claim:repre2}
  For a connected subgraph $H$ of $G$, 
   $A[E(H)]$ is nonsingular if and only if $H$ is a tree and $\sum_{v\in V(H)} \gamma(v)\neq 0$.  
  \end{claim}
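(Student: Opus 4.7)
The plan is to compute $\det(A[E(H)])$ essentially explicitly using the Binet–Cauchy formula and reduce the claim to a statement about $\phi(\sum_{v\in V(H)}\gamma(v))$. Since the preceding Claim~\ref{claim:repre1} already gives $\det(A[F]) = 0$ whenever $F$ is not acyclic, the direction ``$A[E(H)]$ nonsingular implies $H$ is a tree'' is immediate: if $H$ is connected and not a tree, then $E(H)$ contains a cycle, hence Claim~\ref{claim:repre1} applies. So the remaining task is, assuming $H$ is a tree, to prove that $A[E(H)]$ is nonsingular if and only if $\sum_{v\in V(H)}\gamma(v)\neq 0$.

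Write $s = |E(H)|$, so $|V(H)| = s+1$ since $H$ is a tree. Observe that $A[E(H)] = I_G[V,E(H)]^T\, B\, I_G[V,E(H)]$. Apply the three-term Binet–Cauchy identity (Corollary~\ref{coro: 3-term CB formula}) with this factorization. Because $B$ is diagonal, $\det(B[S,T]) = 0$ unless $S = T$, so the sum collapses to
\[
\det(A[E(H)]) \;=\; \sum_{S\in\binom{V}{s}} \det\bigl(I_G[S,E(H)]\bigr)^{2}\cdot \det(B[S]).
\]
Next, I would argue that $\det(I_G[S,E(H)])=0$ unless $S \subseteq V(H)$: every vertex $v \notin V(H)$ contributes an all-zero row to $I_G[S,E(H)]$. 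Since $|V(H)|=s+1$, the only candidate sets are $S = V(H) - \{v\}$ for some $v \in V(H)$, and for these Lemma~\ref{lem: det tree} yields $\det(I_G[S,E(H)]) = \pm 1$ (applied to the tree $H$; the sign is irrelevant since we square it).

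Therefore, letting $P = \prod_{u\in V(H)} 1/\phi(\gamma(u))$, we get $\det(B[V(H)-\{v\}]) = P \cdot \phi(\gamma(v))$ and hence
\[
\det(A[E(H)]) \;=\; \sum_{v\in V(H)} P\cdot \phi(\gamma(v)) \;=\; P\cdot \phi\!\left(\sum_{v\in V(H)}\gamma(v)\right),
\]
where the last equality uses that $\phi$ is a group homomorphism into the additive group of $\mathbb{F}$. Since every $\phi(\gamma(u))\neq 0$ (we reduced to this case at the outset of the theorem's proof) we have $P\neq 0$, and since $\phi$ is injective, $\det(A[E(H)])\neq 0$ if and only if $\sum_{v\in V(H)}\gamma(v)\neq 0$, as desired.

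The main obstacle is purely bookkeeping: verifying that the off-diagonal Binet–Cauchy terms vanish because $B$ is diagonal, and that the only surviving index sets $S$ are the vertex complements of a single vertex of~$H$. Everything else follows from the elementary determinantal formulas for incidence matrices of trees and the linearity of $\phi$; no subtle characteristic-$p$ cancellation is needed because the squared determinants equal $1$ and the only algebraic content is that $\phi$ preserves sums.
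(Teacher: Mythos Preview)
Your proof is correct and follows essentially the same approach as the paper: apply the three-term Binet--Cauchy formula to $A[E(H)] = I_G^T B\, I_G$ restricted to $E(H)$, use diagonality of $B$ to collapse the double sum, observe that only the index sets $V(H)-\{v\}$ survive, and invoke Lemma~\ref{lem: det tree} together with the injectivity and additivity of $\phi$. The only cosmetic difference is that you make explicit the step that rows indexed by vertices outside $V(H)$ are zero (which the paper leaves implicit by summing directly over $V(H)-\{v\}$).
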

  \begin{proof}
  Let $V_{H}=V(H)$ and $E_{H}=E(H)$. 
  Let us first show that if $H$ is a tree, then $\det(A[E_{H}])=\det(B[V_{H}])\phi\left(\sum_{v\in V_{H}}\gamma(v)\right)$.
   For~$X, Y\subseteq V$ with $|X| = |Y|$, we have $\det(B[X,Y]) \neq 0$ if and only if $X=Y$ because $B$ is a diagonal matrix.    
   Then by Lemma~\ref{lem: det tree} and Corollary~\ref{coro: 3-term CB formula}, we know that
    \begin{align*}
      \det(A[E_{H}])
      &=
      \sum_{v,w \in V_{H}} \det(I_G^T[E_H, V_H-\{v\}])\det(B[V_H-\{v\}, V_H-\{w\}])\det(I_G[V_H-\{w\}, E_H]) \\
      &=
      \sum_{v\in V_H} \det(I_G[V_H-\{v\}, E_H])^2 \det(B[V_H-\{v\}]) \\
      &=
      \det(B[V_{H}]) \sum_{v\in V_H} \phi(\gamma(v))
      =
      \det(B[V_{H}]) \phi \left( \sum_{v\in V_H} \gamma(v) \right)
    \end{align*}
    
    If $A[E_H]$ is nonsingular, then $H$ is a tree by Claim~\ref{claim:repre1} and  therefore $\sum_{v\in V_H} \gamma(v)\neq 0$.
   	Conversely, if~$H$ is a tree and $\sum_{v\in V_H} \gamma(v) \neq 0$, then $\det(A[E_H]) \neq 0$ because $\phi$ is injective and $\det(B[V_{H}]) \neq 0$.
  \end{proof}

By Claim~\ref{claim:repre2}, $A[F]$ is nonsingular if and only if $F$ is acyclic $\gamma$-nonzero in $G$, which implies that $\mathcal{G}(G,\gamma)$ is representable over $\mathbb{F}$.
\end{proof}

Now we show that for some pairs of an abelian group $\Gamma$ and a finite field $\mathbb{F}$, not every $\Gamma$-graphic delta-matroid is representable over $\mathbb{F}$.
Let $R(n;m)$ be the Ramsey number that is the minimum integer $t$ such that any coloring of edges of $K_{t}$ into $m$ colors induces a monochromatic copy of~$K_{n}$.

\begin{theorem}[Ramsey~{\cite{Ramsey1929}}]
\label{thm:ramsey}
For positive integers $m$ and $n$, $R(n;m)$ is finite.
\end{theorem}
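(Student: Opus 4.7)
The plan is to obtain finiteness of $R(n;m)$ by a two-layer induction: first establish finiteness of the off-diagonal two-color Ramsey number, which I will denote $r(s,t)$ (the minimum $N$ such that every $2$-coloring of $E(K_N)$ contains a monochromatic $K_s$ in color $1$ or a monochromatic $K_t$ in color $2$), and then reduce the general $m$-color case to it.

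For the two-color step, I would prove by induction on $s+t$ the classical Erd\H{o}s--Szekeres recursion
\[
r(s,t) \leq r(s-1,t) + r(s,t-1),
\]
with trivial base cases $r(1,t) = r(s,1) = 1$. The inductive step is a pigeonhole at a single vertex: in $K_N$ with $N = r(s-1,t) + r(s,t-1)$ under any $2$-coloring, an arbitrary vertex $v$ either has at least $r(s-1,t)$ color-$1$ neighbors (apply the hypothesis among them to obtain a color-$1$ $K_{s-1}$, then adjoin $v$, or directly a color-$2$ $K_t$), or symmetrically at least $r(s,t-1)$ color-$2$ neighbors.

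For the general case, I would induct on $m$ to prove
\[
R(n;m) \leq r\bigl(n,\, R(n;m-1)\bigr),
\]
with base case $R(n;1) = n$. Given any $m$-coloring of $E(K_t)$ with $t = r(n, R(n;m-1))$, merge colors $2,\dots,m$ into a single ``super-color'' and apply the two-color bound: either we obtain a monochromatic $K_n$ in color $1$ and are done, or we obtain a $K_{R(n;m-1)}$ whose edges use only colors $2,\dots,m$, at which point the inductive hypothesis yields the required monochromatic $K_n$.

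There is essentially no serious obstacle here, as the result is textbook classical and each inductive step is a one-line pigeonhole argument; the only mild care is to work with the off-diagonal version $r(s,t)$ rather than the diagonal one, since the $m$-to-$2$ color reduction naturally produces unequal arguments. A more unified alternative would be a single induction on $n_1+\cdots+n_m$ using the multi-color recursion $R(n_1,\dots,n_m) \leq \sum_i R(n_1,\dots,n_i-1,\dots,n_m) - (m-2)$, but the staged approach above is the most transparent.
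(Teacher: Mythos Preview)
Your argument is correct and is the standard textbook proof of the finite Ramsey theorem. Note, however, that the paper does not actually prove this statement: it is simply quoted as a classical result with a citation to Ramsey~\cite{Ramsey1929}, and then used as a black box (via Corollary~\ref{cor:ramsey2}) in the proof of Theorem~\ref{thm:repre2}. So there is no ``paper's own proof'' to compare against; your proposal supplies a self-contained justification where the paper gives none.
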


\begin{corollary}
\label{cor:ramsey2}
Let $k$ be a positive integer and $\mathbb{F}$ be a finite field of order $m$.
If $N \geq R(k;m)$, then each $N\times N$ symmetric matrix $A$ over $\mathbb{F}$ has a $k\times k$ principal submatrix $A'$ such that all non-diagonal entries are equal.
\end{corollary}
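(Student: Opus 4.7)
The plan is to reduce the statement to a direct application of Ramsey's theorem (Theorem~\ref{thm:ramsey}) by encoding the off-diagonal entries of $A$ as an edge-coloring of a complete graph. Let $A = (a_{ij})$ be indexed by $[N] = \{1, 2, \dots, N\}$ and consider the complete graph $K_N$ on vertex set $[N]$. Since $A$ is symmetric, the value $a_{ij} = a_{ji}$ depends only on the unordered pair $\{i,j\}$, so I can define an edge-coloring $c : E(K_N) \to \mathbb{F}$ by $c(\{i,j\}) = a_{ij}$. This uses at most $|\mathbb{F}| = m$ colors.

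Since $N \geq R(k;m)$, the definition of the Ramsey number guarantees a monochromatic copy of $K_k$ in this coloring: there exists a $k$-element subset $S \subseteq [N]$ and a value $\alpha \in \mathbb{F}$ such that $c(\{i,j\}) = \alpha$ for every pair $i \neq j$ in $S$. Setting $A' := A[S]$, this is a $k \times k$ principal submatrix whose every off-diagonal entry equals $\alpha$, as required.

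There is no real obstacle here beyond ensuring that the coloring is well-defined, which is immediate from the symmetry of $A$, and that the number of colors does not exceed $m$, which is immediate from $|\mathbb{F}| = m$. The statement then follows at once from Theorem~\ref{thm:ramsey}.
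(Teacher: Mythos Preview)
Your proof is correct and is exactly the intended argument: the paper states Corollary~\ref{cor:ramsey2} without proof, treating it as an immediate consequence of Theorem~\ref{thm:ramsey} via precisely this edge-coloring reduction. There is nothing to add.
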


\begin{lemma}
  \label{lem: graphic not regular}
  Let $\mathbb{F}$ be a field.
  If every $\mathbb{Z}_{2}$-graphic delta-matroid is representable over $\mathbb{F}$, then the characteristic of $\mathbb{F}$ is $2$.
  \end{lemma}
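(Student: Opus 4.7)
The plan is to prove the contrapositive: assuming $\operatorname{char}\mathbb{F}\neq 2$, I will exhibit a $\mathbb{Z}_2$-graphic delta-matroid not representable over $\mathbb{F}$. The first reduction is that every $\mathbb{Z}_2$-graphic delta-matroid is graphic and therefore even by Theorem~\ref{thm:even}, so by Lemma~\ref{lem:skew} any representation over $\mathbb{F}$ may be taken to be by a skew-symmetric matrix after twisting. I would test the assumption on the family of $\mathbb{Z}_2$-labelled graphs $(K_n,\gamma)$ with $\gamma\equiv 1$. For the delta-matroid $M_n := \mathcal{G}(K_n,\gamma)$, the empty set is feasible (each isolated vertex carries label $1\neq 0$), so by Proposition~\ref{prop: normal repre} a representation yields a skew-symmetric matrix $A_n$ with $\mathcal{F}(A_n)=\mathcal{F}(M_n)$ directly. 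Pairwise feasibility forces $(A_n)_{ef}\neq 0$ if and only if the edges $e,f$ share a vertex in $K_n$, since $\det A_n[\{e,f\}] = (A_n)_{ef}^2$.

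The next step is a Ramsey-type extraction. Restricting to the edges incident to a fixed vertex $v$, the corresponding principal submatrix has all strictly upper-triangular entries in $\mathbb{F}^{\ast}$. For a finite field $\mathbb{F}$ of order $m$ and $n$ sufficiently large (depending on $m$ and a target $k$), coloring these upper-triangular entries by their values and applying Theorem~\ref{thm:ramsey} produces $k$ leaves $l_1,\dots,l_k$ at $v$ with $(A_n)_{vl_i,vl_j} = \alpha$ for all $i<j$, for some common $\alpha \in \mathbb{F}^{\ast}$. Iterating the Ramsey step on the ``between leaves'' entries $(A_n)_{l_il_j,l_jl_k}$ (and similar cross-entries) further normalizes the block of $A_n$ indexed by all edges inside the induced $K_{k+1}$ on $\{v,l_1,\dots,l_k\}$. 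Then for a carefully chosen non-feasible 4-subset $S$ of edges inside this induced subgraph --- for example, a 4-cycle $v\,l_i\,l_j\,l_k\,v$ arranged so that skew-symmetry pairs the two nonzero terms of its Pfaffian with opposite signs --- the relation $\operatorname{Pf}(A_n[S]) = 0$ forced by non-feasibility collapses, under the Ramsey uniformity, to $2\alpha^{2} = 0$. Since $\alpha\neq 0$, this contradicts $\operatorname{char}\mathbb{F}\neq 2$.

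For an infinite field $\mathbb{F}$, Corollary~\ref{cor:ramsey2} and Theorem~\ref{thm:ramsey} do not apply directly, so I would specialize. A hypothetical representation $A_n$ of $M_n$ over $\mathbb{F}$ uses finitely many entries, generating a finitely generated subring $R\subseteq\mathbb{F}$ of the same characteristic. Choosing a maximal ideal $\mathfrak{m}\subseteq R$ not containing any of the nonzero entries of $A_n$ (possible by avoiding finitely many height-one primes), the residue field $R/\mathfrak{m}$ is a finite field of the same characteristic, and the reduction $\overline{A_n}$ still represents $M_n$ over $R/\mathfrak{m}$ because every condition defining $\mathcal{F}(A_n)$ (vanishing or non-vanishing of principal minors) is preserved by the specialization. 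The finite-field argument above then yields $\operatorname{char}(R/\mathfrak{m}) = 2$, so $\operatorname{char}\mathbb{F} = 2$, a contradiction.

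The hard part will be the explicit construction of the non-feasible 4-subset $S$ (and the precise configuration of the Ramsey-uniform block) so that the Pfaffian identity collapses to $2\alpha^{2}=0$ rather than to a trivially satisfied relation such as $\alpha^{2}-\alpha^{2}=0$. The natural candidate is a 4-subset whose three Pfaffian pairings contribute with one zero term (arising from a non-adjacent edge pair in the star configuration) and two nonzero terms that are forced to have the \emph{same} sign by skew-symmetry, producing $2\alpha^{2}$; pinning down such a configuration --- likely by leaving the purely-star Ramsey and working inside an induced $K_{5}$ on $\{v,l_1,l_2,l_3,l_4\}$ --- is the algebraic core of the proof.
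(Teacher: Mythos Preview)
Your proposal is not a complete proof and, more importantly, the strategy has real problems that the acknowledged ``hard part'' does not cover.

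First, the Ramsey step you describe only uniformizes entries $A_{vl_i,vl_j}$ within one star, but the Pfaffian of a $4$-cycle $v\,l_1\,l_2\,l_3\,v$ involves entries $A_{vl_1,l_1l_2}$, $A_{l_1l_2,l_2l_3}$, $A_{l_2l_3,l_3v}$ lying at four different stars; ``iterating Ramsey'' on the cross-entries is not a well-defined single colouring, and you have not shown that any configuration yields $2\alpha^2$ rather than an identity like $\alpha\beta=-\gamma\delta$ with four a priori unrelated nonzero values. Second, your reduction for infinite $\mathbb{F}$ is flawed in two places: if $\operatorname{char}\mathbb{F}=0$ then $R/\mathfrak{m}$ has positive characteristic, not ``the same characteristic'', so concluding $\operatorname{char}(R/\mathfrak{m})=2\Rightarrow\operatorname{char}\mathbb{F}=2$ fails unless you additionally force $2\notin\mathfrak{m}$; and even then the finite-field Ramsey argument needs $n$ large in terms of $|R/\mathfrak{m}|$, while $R$ (hence $|R/\mathfrak{m}|$) already depends on the representation $A_n$ of $M_n$, a circularity you do not address.

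The paper avoids all of this by working with one fixed small graph, $K_{2,3}$ with $\gamma\equiv 1$, and using row/column \emph{scaling} rather than Ramsey to normalize entries. After scaling five entries to $1$, the remaining nonzero entries are four unknowns $t_1,\dots,t_4$, and the vanishing of four explicit principal minors (for the three $4$-cycles and for the full $6$-element set, none of which is acyclic $\gamma$-nonzero) forces $t_2=t_3=t_1t_4=1$ and $t_1t_4+t_2+t_3-1=0$, whence $1=-1$. This is a two-paragraph computation valid over every field simultaneously: no case split on $|\mathbb{F}|$, no Ramsey, no specialization. If you want to rescue your approach, the right move is to replace Ramsey by scaling and to work on a single explicit graph; you will then essentially recover the paper's argument.
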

  \begin{proof}
  Let $G=(V,E)$ be a graph such that $V = \{u_1, u_2, w_1, w_2, w_3\}$ and $E = \{u_iw_j : i\in \{1,2\}, \; j\in \{1,2,3\}\}$. We label edges $u_{1}w_{1}$, $u_{1}w_{2}$, $u_{1}w_{3}$ by $1$, $2$, $3$, respectively and label edges $u_{2}w_{1}$, $u_{2}w_{2}$, $u_{2}w_{3}$ by $4$, $5$, $6$, respectively.
  See Figure~\ref{fig:k23}.
  
  \begin{figure}\label{fig:k23}
    \begin{center}
      \begin{tikzpicture}
        \tikzstyle{v}=[circle,draw,fill=black!50,inner sep=0pt,minimum width=3pt]
        \foreach \i in {1,2} {
          \draw (4*\i-2,2) node [v,label=$u_{\i}$](u\i) {};
        }
        \foreach \j in {1,2,3} {
          \draw (2*\j,0) node [v,label=below:$w_{\j}$](w\j) {};
          \draw  (u1)--(w\j) node [pos=0.5,label=above left:$\j$]{};
        }
        \draw  (u2)--(w1) node [pos=0.5,label=above right:$4$]{};
        \draw  (u2)--(w2)node [pos=0.5,label=above right:$5$]{};
        \draw (u2)--(w3)node [pos=0.5,label=above:$6$]{};
      \end{tikzpicture}
    \end{center}
    \caption{A graph $G$ in the proof of Lemma~\ref{lem: graphic not regular}.}
  \end{figure}
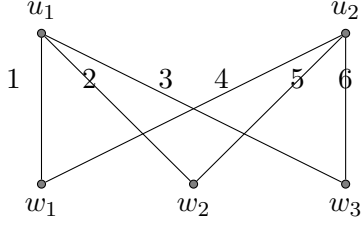

  Let $\gamma:V\rightarrow\mathbb{Z}_{2}$ be a map such that $\gamma(x)=1$ for each $x\in V$ and $M=(E,\mathcal{F})$ be a $\mathbb{Z}_{2}$-graphic delta-matroid $\mathcal{G}(G,\gamma)$. Then $M$ is even by Oum~\cite{Oum2009circle}. Since $\gamma(x)\neq 0$ for each $x\in V$, we have $\emptyset\in\mathcal{F}$.
  By Lemmas~\ref{prop: normal repre} and \ref{lem:skew}, there exists an $E\times E$ skew-symmetric matrix $A$ such that $\mathcal{F}=\mathcal{F}(A)$.
  Since the set of $2$-element acyclic $\gamma$-nonzero sets is $\{\{1,2\},\{1,3\},\{2,3\},\{1,4\},\{2,5\},\{3,6\},\{4,5\},\{4,6\},\{5,6\}\}$, by scaling rows and columns simultaneously, we may assume that 
  \[
  A=
  \begin{blockarray}{ccccccc}
  & 1 & 2 & 3 & 4 & 5 & 6 \\
  \begin{block}{c(cccccc)}
  1 & 0 & 1 & 1 & 1 & 0 & 0 \\
  2 & -1 & 0 & t_{1} & 0 & 1 & 0 \\
  3 & -1 & -t_{1} & 0 & 0 & 0 & 1 \\
  4 & -1 & 0 & 0 & 0 & t_{2} & t_{3} \\
  5 & 0 & -1 & 0 & -t_{2} & 0 & t_{4} \\
  6 & 0 & 0 & -1 & -t_{3} & -t_{4} & 0 \\
  \end{block}
  \end{blockarray}
  \] such that $t_{i}\neq 0$ for $1\leq i\leq 4$.
  We know that $\{1,2,4,5\}$, $\{1,3,4,6\}$, $\{2,3,5,6\}$ and $\{1,2,3,4,5,6\}$ are not acyclic $\gamma$-nonzero sets in $G$.
  Hence,
  $\det(A[\{1,2,4,5\}])=(t_{2}-1)^{2} = 0$,
  $\det(A[\{1,3,4,6\}])=(t_{3}-1)^{2} = 0$,
  $\det(A[\{2,3,5,6\}])=(t_{1}t_{4}-1)^{2} = 0$ and
  $\det(A[\{1,2,3,4,5,6\}])=(t_{1}t_{4} + t_2 + t_3 -1)^{2} = 0$.
  Then, we have $1=t_{1}t_{4} = -t_2 -t_3 + 1 = -1$.
  Therefore, the characteristic of $\mathbb{F}$ is $2$.
\end{proof}

\begin{theorem5}
 Let $\mathbb{F}$ be a finite field of characteristic $p$, and $\Gamma$ be an abelian group.
 If every $\Gamma$-graphic delta-matroid is representable over $\mathbb{F}$, then $\Gamma$ is an elementary abelian $p$-group.
\end{theorem5}
\begin{proof}
  If $\Gamma = \mathbb{Z}_2$, then the conclusion follows by Lemma~\ref{lem: graphic not regular}.
  So we may assume that $|\Gamma| \geq 3$.

Suppose that $\Gamma$ is not an elementary abelian $p$-group.
Then $\Gamma$ has a nonzero element $g$ whose order is not equal to $p$.
There exists a nonzero $h \in \Gamma$ such that $h \neq g$ since $|\Gamma| \geq 3$.
Let $n:=R(p+1;|\mathbb{F}|)$ and $G = (V, E)$ be a graph isomorphic to~$K_{1,n+1}$.
Let $u$ be a leaf of $G$ and $e$ be the edge incident with $u$.
Let $\gamma : V \rightarrow \Gamma$ be a map such that, for each $v\in V$, 
\[
  \gamma(v)=
  \begin{cases}
    h   & \text{if $v = u$,} \\
    g   & \text{if $v \neq u$ and $v$ is a leaf,} \\
    -g  & \text{otherwise.}
  \end{cases}
\]
Let $M = \mathcal{G}(G, \gamma)$ be a $\Gamma$-graphic delta-matroid. Then $M$ is normal because $\gamma(v)\neq 0$ for each vertex~$v$ of $G$.
By Proposition~\ref{prop: normal repre}, we may assume that $M$ is equal to $(E, \mathcal{F}(A))$ for an $E \times E$ symmetric or skew-symmetric matrix $A$ over $\mathbb{F}$.
Since $\emptyset$ and~$\{e\}$ are acyclic $\gamma$-nonzero in $G$, the delta-matroid $M$ is not even.
So $A$ is not skew-symmetric by Lemma~\ref{lem:skew}.

Since every $1$-element subset of $E - \{e\}$ is not $\gamma$-nonzero in $G$, all diagonal entries of $A[E-\{e\}]$ are zero.
By Corollary~\ref{cor:ramsey2}, there exists $F \subseteq E - \{e\}$ such that $|F| = p+1$ and all non-diagonal entries of $A[F]$ are identical.
Since the characteristic of the field~$\mathbb{F}$ is $p$, the submatrix $A[F]$ is singular.
Therefore, $F$ is not acyclic $\gamma$-nonzero in $G$.

For each component $C$ of a subgraph $(V(G),F)$, we have $\sum_{v \in V(C)} \gamma(v)\in\{h, g, pg\}$.
This implies that $F$ is acyclic $\gamma$-nonzero in $G$, which is a contradiction.
\end{proof}

\providecommand{\bysame}{\leavevmode\hbox to3em{\hrulefill}\thinspace}
\providecommand{\MR}{\relax\ifhmode\unskip\space\fi MR }
\providecommand{\MRhref}[2]{%
  \href{http://www.ams.org/mathscinet-getitem?mr=#1}{#2}
}
\providecommand{\href}[2]{#2}


\end{document}